\newtheorem{theorem}{Theorem}
\newtheorem*{theorem*}{Theorem}
\newtheorem{lemma}{Lemma}[section]
\newtheorem{proposition}[lemma]{Proposition}
\newtheorem{remark}{Remark}[section]
\DeclareMathAlphabet{\mathpzc}{OT1}{pzc}{m}{it}
\newcommand{\diag}[1]{\begin{pmatrix}
    #1 & \\
    & 1
\end{pmatrix}}
\newcommand{\G}{\mathrm{G}}
\newcommand{\N}{\mathrm{N}}
\newcommand{\A}{\mathbb{A}}
\newcommand{\Q}{\mathbb{Q}}
\newcommand{\Z}{\mathbb{Z}}
\newcommand{\C}{\mathbb{C}}
\newcommand{\p}{\mathfrak{p}}
\newcommand{\q}{\mathfrak{q}}
\newcommand{\oo}{\mathfrak{o}}
\newcommand{\R}{\mathbb{R}}
\newcommand{\GL}{\mathrm{GL}}
\renewcommand{\d}{\mathrm{d}}
\renewcommand{\tilde}{\widetilde}
\renewcommand{\hat}{\widehat}
\newcommand{\bs}{\symbol{92}}
\newcommand{\vph}{\varphi}
\newcommand{\defeq}{\vcentcolon=}
\begin{document}
\title{The second moment of $\mathrm{GL}(2)\times \mathrm{GL}(2)$ Rankin--Selberg $L$--functions in the level aspect}
\author{Jakub Dobrowolski}
\begin{abstract}
    We prove an asymptotic formula with a power-saving error term for a specific weighted second moment of $\mathrm{GL}(2)\times \mathrm{GL}(2)$ Rankin--Selberg $L$--function, $L(1/2,\pi\otimes \pi_0)$ over any number field $F$ where $\pi$ runs over representations with the non-archimedean conductor dividing an ideal which tends to infinity and $\pi_0$ is a fixed cuspidal representation unramified everywhere. The error term shows the square root cancellation under the assumption of the Generalised Ramanujan Conjecture.
\end{abstract}
\maketitle
\section{Introduction}
\subsection{Introduction to the problem} Let $F$ be a number field, and $\q\subset F$ be an integral ideal coprime to the different ideal of $F$.
In this article, we consider the automorphic representations in the conductor-aspect family
\begin{equation}\label{eq: conductor family}
    \mathcal{F}_\q\defeq \{ \pi\,\; \mathrm{automorphic\, representation\, for\,\;} \mathrm{PGL}_2(F) :\,\;  \mathfrak{C}_f(\pi)\mid \q,\, C_{\infty}(\pi)\ll 1\},
\end{equation}
where $\mathfrak{C}_f(\pi)$ and $ C_{\infty}(\pi)$ are the arithmetic conductor and the archimedean conductor, respectively, as in Section \ref{section: Autreps}. We find the asymptotic expansion with a power-saving error term for a specific weighted second moment of $\mathrm{GL}(2)\times \mathrm{GL}(2)$ Rankin--Selberg $L$ function, where one representation is fixed.
More concretely, the result of this paper can be roughly stated as follows:
\begin{theorem*}
     Let $\pi_0$ be a fixed cuspidal representation for $\mathrm{PGL}_2(F)$ that is unramified at all places. One has the following expansion of a weighted second moment over the conductor family \eqref{eq: conductor family}
    \[ \mathbb{E}_{\substack{\pi\in\mathcal{F}_\q\\ \mathrm{generic}}} \Big[ \frac{\left\lvert L(1/2,\pi\otimes \pi_0)\right\rvert^2}{L(1,\pi,\mathrm{Ad})}+\mathrm{Eisenstein} \Big]=c_F L(1,\pi_0,\mathrm{Ad}) \log^3 N(\q)+ P(\q)+O_{\pi_0,\varepsilon} (N(\q)^{-1/2+\theta+\varepsilon}) \]
    where $c_F$ is a constant depending only on $F$ and $P(\q)$ is a degree 2 polynomial in $\log N(\q)$ with coefficients bounded by $(\log\log N(\q))^4$, and $\theta$ is the best bound for the Generalised Ramanujan Conjecture for $\GL(2)$.
\end{theorem*}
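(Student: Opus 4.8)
\subsection*{Proof proposal (sketch of the intended argument)}

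The plan is to read the left--hand side as a harmonic (Petersson--Kuznetsov--weighted) first moment of $\mathrm{PGL}_2$ Hecke eigenvalues twisted by the fixed coefficients of $\pi_0$. First I would open $|L(1/2,\pi\otimes\pi_0)|^2$ by the approximate functional equation: since $\pi_0$ is everywhere unramified and $\mathfrak C_f(\pi)\mid\q$, the analytic conductor of $L(s,\pi\otimes\pi_0)$ is $\asymp N(\q)^2$, so the AFE is a double sum over integral ideals $\mathfrak m,\mathfrak n$ with $N(\mathfrak m\mathfrak n)\ll N(\q)^{2+\varepsilon}$ against smooth weights, all archimedean data being harmless since $C_\infty(\pi)\ll 1$. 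Using the Rankin--Selberg identity $L(s,\pi\otimes\pi_0)=\zeta_F(2s)\sum_{\mathfrak a}\lambda_\pi(\mathfrak a)\lambda_{\pi_0}(\mathfrak a)N(\mathfrak a)^{-s}$ (with controlled corrections to the Euler factors at $\p\mid\q$), I would reorganise the AFE into a weighted bilinear form in $\lambda_\pi(\mathfrak a)\overline{\lambda_\pi(\mathfrak b)}$ carrying $\lambda_{\pi_0}$--coefficients and divisor parameters $\mathfrak d_1,\mathfrak d_2$ coming from the two $\zeta_F(2s)$--factors.

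Then I would apply the Petersson--Kuznetsov formula for $\mathcal F_\q$, with the archimedean test function supported on bounded spectral parameters so that the spectral weight becomes $L(1,\pi,\mathrm{Ad})^{-1}$ up to a fixed constant and the full spectrum (cuspidal plus the ``$+\,\mathrm{Eisenstein}$'' continuous part) is matched by a diagonal term together with a sum of Kloosterman sums $S(\mathfrak a,\mathfrak b;\mathfrak c)$ over moduli $\mathfrak c$ divisible by $\q$, each against a Bessel--type archimedean kernel. On the diagonal $\mathfrak a=\mathfrak b$, summing out $\mathfrak d_1,\mathfrak d_2$ and using $\sum_{\mathfrak a}|\lambda_{\pi_0}(\mathfrak a)|^2N(\mathfrak a)^{-s}=\zeta_F(s)L(s,\pi_0,\mathrm{Ad})/\zeta_F(2s)$, the generating Dirichlet series is $\zeta_F(s)^3 L(s,\pi_0,\mathrm{Ad})/\zeta_F(2s)$ times an absolutely convergent factor, hence has a pole of order $3$ at $s=1$ with leading coefficient proportional to $L(1,\pi_0,\mathrm{Ad})$; carrying this against the length--$N(\q)$ cutoff and shifting contours produces $c_F\,L(1,\pi_0,\mathrm{Ad})\log^3 N(\q)$, with $c_F$ built from the residue and values of $\zeta_F$ and from normalisation constants depending only on $F$, together with a polynomial $P(\q)$ of degree $2$ in $\log N(\q)$ whose coefficients involve the subleading Laurent data of $\zeta_F$ and $L(s,\pi_0,\mathrm{Ad})$ at $s=1$ and local factors at $\p\mid\q$, hence are bounded by $(\log\log N(\q))^4$; the truncation and contour errors at this stage are $O(N(\q)^{-1/2+\varepsilon})$.

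The hard part will be the off--diagonal Kloosterman terms. Because the conductor $N(\q)^2$ is the square of the family modulus $N(\q)$, the relevant range $N(\mathfrak a),N(\mathfrak b)\ll N(\q)$, $N(\mathfrak c)\asymp N(\q)$ is precisely the ``balanced'' one in which the Weil bound alone --- even after $\mathrm{GL}(2)$ Voronoi summation, which in this regime is essentially self--dual --- recovers only a bound of size $N(\q)^{1/2+2\theta+\varepsilon}$, far too large; the situation is analogous to the fourth moment of $\mathrm{GL}(2)$ $L$--functions in the level aspect. I would open $S(\mathfrak a,\mathfrak b;\mathfrak c)$, apply Voronoi summation for the fixed $\pi_0$ over $F$ in both $\mathfrak a$ and $\mathfrak b$ --- which over a general number field requires Hankel/Bessel transforms assembled place--by--place at the archimedean completions, controlled uniformly in $\q$ --- and then extract the missing cancellation in the resulting bilinear form of (transformed) Kloosterman sums, exploiting cancellation in the modulus $\mathfrak c$ through a spectral large--sieve / second Kuznetsov step (or a $\delta$--method/reciprocity input) to beat the trivial diagonal bound, with pointwise Hecke bounds $|\lambda_{\pi_0}(\mathfrak n)|\ll N(\mathfrak n)^{\theta+\varepsilon}$ as the only non--elementary ingredient; this is what yields $O_{\pi_0,\varepsilon}(N(\q)^{-1/2+\theta+\varepsilon})$, collapsing to square--root cancellation $N(\q)^{-1/2+\varepsilon}$ when $\theta=0$. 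The main obstacle is exactly this last step: carrying out the number--field Voronoi cleanly with all archimedean transforms and $\q$--dependence under control, and breaking the balanced--range barrier with enough uniformity to reach the exponent $-1/2+\theta$ rather than a weaker one.
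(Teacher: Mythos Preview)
Your approach is genuinely different from the paper's, and the gap you yourself flag at the end is real.

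The paper does \emph{not} open the square by an approximate functional equation and does not touch Kloosterman sums at all. Instead it starts from the integral identity
\[
\langle \vph_0\,\mathrm{Eis}(f_{1/2})(\cdot\,x),\ \vph_0\,\mathrm{Eis}(f_{1/2})(\cdot\,x)\rangle
=\langle |\mathrm{Eis}(f_{1/2})|^2(\cdot\,x),\ |\vph_0|^2\rangle,
\]
with $\vph_0\in\pi_0$ the spherical cusp form and a carefully chosen Eisenstein section $f_{1/2}$ whose restriction to $\mathrm{K}(F_v)$ is $\mathbf 1_{K_0(\p_v^{r_v})}$ at $v\mid\q$, shifted by $x=\mathrm{diag}(\varpi_v^{-r_v},1)$. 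Plancherel on the left produces the weighted moment, with the weight $J_\q(\pi)$ exactly the one in the theorem. On the right, a \emph{regularised} spectral decomposition of $|\mathrm{Eis}(f_{1/2})|^2$ gives two pieces: a degenerate term $\langle\mathcal E_{s_1,s_2}(\cdot\,x),|\vph_0|^2\rangle$ built from products of constant terms, and a ``dual'' spectral integral $\int_{\pi'}\langle \overline{\mathrm{Eis}(f)}\mathrm{Eis}(f),\vph_{\pi'}\rangle_{\mathrm{reg}}\langle\vph_{\pi'},|\vph_0|^2\rangle$ over \emph{unramified} $\pi'$. The degenerate term is computed by explicit local Rankin--Selberg integrals at $v\mid\q$ and yields the cubic polynomial in $\log N(\q)$ with leading coefficient proportional to $\Lambda(1,\pi_0,\mathrm{Ad})$. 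The dual spectral integral is the entire error term: at each $v\mid\q$ the regularised local zeta integral is evaluated exactly in Satake parameters, and the whole $\q$--dependence is a factor $N(\p_v)^{-r_v/2}\cdot(\alpha_1^{r_v}\ \text{or}\ \alpha_2^{r_v})$-shaped polynomial, giving $N(\q)^{-1/2+\theta+\varepsilon}$ immediately. There is no balanced range, no Voronoi, no second application of Kuznetsov.

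Your route is the classical one of Kowalski--Michel--VanderKam, and you correctly locate where it breaks: with conductor $N(\q)^2$ against family modulus $N(\q)$, the off--diagonal after Voronoi in $\pi_0$ is self--dual and Weil only recovers the convexity boundary. In that paper, over $\Q$ with prime level, the resulting exponent was $-1/12$, not $-1/2+\theta$; your final paragraph essentially asks for a reciprocity step to bridge that gap, but does not supply one. The paper's point is precisely that the inner--product identity above \emph{is} the reciprocity formula, packaged so that the ``off--diagonal'' never appears as Kloosterman sums but as a convergent spectral moment over an everywhere--unramified family with all $\q$--dependence sitting in an explicitly computable local weight. That is what buys the exponent $-1/2+\theta$ uniformly over any number field and any ideal $\q$, and it is not visible from the AFE/Kuznetsov side without effectively rediscovering this identity.
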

For a more precise statement, see Theorem \ref{Theorem}.  

This is the first result that proves an asymptotic expansion with a power-saving error term for any number field with an arbitrary level. The statement of the theorem above is very general and does not assume anything about the way that the ideal $\q$ grows. Finally, we can explicitly separate the main and error term contributions, as mentioned in \eqref{eq: intro reg spec decomp}, where the error term satisfies a square-root cancellation under the Generalised Ramanujan Conjecture for $\GL(2)$.

The problem was first studied in \cite{KowMichVan}, where the authors obtained a similar asymptotic expansion in the case $F=\Q$ and $\q$ being a prime, see Remark \ref{remark: similarity to KMV}. Their treatment of the error term is based on classical methods, whereas our proof relies on the period-theoretic approach.

Later, in \cite{BlomHar}, the authors considered the analogous archimedean problem, looking at the hybrid moment in weight and $t$ aspect. They obtain an asymptotic expansion with a power-saving error term in the case where the fixed representation $\pi_0$ is either cuspidal or Eisenstein \cite[Theorem 1, Theorem 2, Theorem 3]{BlomHar}. 

In \cite{HumphriesKhan2025SecondMoment}, the authors obtained an asymptotic expansion for the second moment of $L(1/2,f\otimes g)$ where $g$ is a holomorphic Hecke cusp form of weight $k_g=k$ and $f$ ranges over an orthonormal basis of cusp forms of the same weight $k$. Since the weight $k_g$ varies with $k$, obtaining a power-saving error term, for now, seems out of reach.

Finally, from a private conversation with Aritra Ghosh, we learned about his ongoing work on the mixed second moment $L(1/2,f\otimes g)L(1/2,g\otimes h)$, where $g$ runs over holomorphic forms of varying level. His goal is to show simultaneous non-vanishing while we focus primarily on obtaining an asymptotic expansion over an arbitrary number field and with an arbitrary level, and hence obtain a better error term in the expansion.

The methods used in this work are based on the theory of integral representations of automorphic $L$--functions. The starting point of the analysis is the simple-looking identity of the form
\[\langle \lvert f_1\rvert^2,\lvert f_2\rvert^2 \rangle = \langle f_1 f_2,f_1 f_2 \rangle,\]
where $f_1,f_2$ are well-chosen automorphic forms specified in Section \ref{section: Set-up}. This equality was first studied in \cite{Venkatesh2010Sparse} and \cite{MVGL2}, where it was used to establish subconvexity for $\GL(2)$ and $\GL(2)\times \GL(2)$ $L$--functions. We closely follow their work, but we need to be more careful when estimating the period side of the equation. To obtain subconvexity, they need to bound the period by $\ll N(\q)^{\varepsilon}$ whereas we care about the asymptotic expansion in powers of $\log N(\q)$ with a power-saving error term.

The main reason why the result of this paper is so general is that the formula \eqref{eq: inner product identity}, when properly expanded, gives one of the reciprocity formulae of the form
\[\int_{\pi} h_1(\pi) L(1/2,\pi\otimes \pi_0\otimes \mathcal{I}(1/2)) \approx \mathrm{deg}+\int_{\pi'} h_2(\pi') \sqrt{L(1/2,\pi'\otimes \pi_0\otimes \tilde{\pi}_0)}\sqrt{L(1/2,\pi'\otimes \mathcal{I}(1/2)\otimes \mathcal{I}(1/2))},\]
where $\mathrm{deg}$ is the degenerate term, $\mathcal{I}(1/2)$ is defined in Section \ref{section: Eis}, and more careful analysis of both sides of the identity above is done in Sections \ref{section: Calculations on the spectral side} and \ref{section: Calculations on the period side}. For a more general discussion on the reciprocity formulae of this form, see \cite{ZachariasI, ZachariasII, Miao}.
\subsection{Overview of the proof}
Fix a number field $F$. For the fixed cuspidal representation $\pi_0$, choose $\vph_0$ a normalised unramified vector at every place. Let $\q=\prod_v \p_v^{r_v}$ be an ideal with norm going to infinity. We define the principal series test vector $f_s$ (see Section \ref{section: Set-up}) such that for $v\mid \q$ the local component restricted to $\mathrm{GL}_2(\oo_v)$ is the identity function on the Hecke subgroup $K_0(\p_v).$ Define the translation matrix $x=(x_v)_v\in \GL_2(\A)$, by letting $x_v=\diag{\varpi_v^{-r_v}}$ for $v\mid \q$ where $\varpi_v$ is a uniformizer, and $x_v=1$ otherwise. The use of this matrix to detect ramification was first considered in \cite{Venkatesh2010Sparse}. We consider the shifted inner product identity
\begin{equation}\label{eq: inner product identity}
    \langle \vph_0 \mathrm{Eis}(f_{1/2})(.\, x), \vph_0 \mathrm{Eis}(f_{1/2})(.\, x) \rangle=\langle \lvert \mathrm{Eis}(f_{1/2})\rvert^2(.\, x),  \lvert\vph_0\rvert^2\rangle
\end{equation}
which was first investigated in this form in \cite{MVGL2}. The left-hand side uses a standard Plancherel formula argument to arrive roughly at an integral
\[\int_{\hat{\mathbb{X}}_{\mathrm{gen}}} \frac{\left\lvert \Lambda(1/2,\pi_0\otimes \tilde{\pi})\right\rvert^2}{\ell(\pi)}\mathbf{1}_{\mathfrak{C}_f(\pi)\mid \q}\, \mathrm{d}\mu_{\text{aut}}(\pi)\]
which is the second moment over the conductor family $\mathcal{F}_\q$ as defined in \eqref{eq: conductor family}. In reality, we are working with the weight $J_\q(\pi)$ (defined in equation \eqref{eq: def of spec weight}), which we justify the validity of in Proposition \ref{Prop: Lower bound on specwght} and Section \ref{section: Spectral Weight}.

For the right-hand side of the equation \eqref{eq: inner product identity}, we consider, instead, the inner product 
\[\langle \overline{\mathrm{Eis}(f_{1/2+s_1})}(.\, x)\mathrm{Eis}(f_{1/2+s_2})(.\, x),  \lvert\vph_0\rvert^2\rangle\]
for some small $s_1,s_2$. We expand it using the regularised spectral decomposition \eqref{eq: reg spec decomp}, which was developed adelically in \cite{MVGL2},
\begin{equation}\label{eq: intro reg spec decomp}
    \langle \mathcal{E}_{s_1,s_2}(.x), \lvert \vph_0\rvert^2\rangle + \int_\pi  \langle \overline{\mathrm{Eis}(f_{1/2+s_1})}(.\, x)\mathrm{Eis}(f_{1/2+s_2})(.x), \vph_{\pi}\rangle_{\mathrm{reg}} \langle \vph_{\pi}, \lvert \vph_0\rvert^2\rangle
\end{equation}
where  $\mathcal{E}_{s_1,s_2}$ is the Eisenstein series formed out of (four) constant terms of $\overline{\mathrm{Eis}(f_{1/2+s_1})}\mathrm{Eis}(f_{1/2+s_2})$, see \eqref{eq: Eis series out of constant terms}. 

Calculation of the first, degenerate term, is done explicitly in Proposition \ref{Prop: Local Zeta integrals}, and after combining local calculations, we get a sum of the form
\[N(\q)^{\overline{s_1}+s_2} F(\overline{s_1},s_2)+N(\q)^{\overline{s_1}-s_2} F(\overline{s_1},-s_2)+N(\q)^{-\overline{s_1}+s_2} F(-\overline{s_1},s_2)+N(\q)^{-\overline{s_1}-s_2} F(-\overline{s_1},-s_2)\]
where $F$ depends mildly on $\q$ and has the triple pole at $s_1=s_2=0.$ Despite this behaviour of $F$, the limit exists and we calculate it as a function of $\q$ in Proposition \ref{Prop: Degenerate term}.

In Proposition \ref{Lem: Regularised triple product calc} we show that the integrand $\langle \overline{\mathrm{Eis}(f_{1/2+s_1})}(.\, x)\mathrm{Eis}(f_{1/2+s_2})(.x), \vph_{\pi}\rangle_{\mathrm{reg}}$ in equation \eqref{eq: intro reg spec decomp}, can be realised as an $L$--function, and in fact gives a power-saving in $N(\q)$. Finally, in Proposition \ref{Prop: Regularised term}, we show that the rest of the integral converges and gives the desired upper bound.
\subsection{Acknowledgments}
We want to thank our PhD supervisor, Subhajit Jana, for the suggestion of the problem, immense support throughout the project, and for carefully reading and commenting on the first draft. We want to thank Ramon Nunes for a careful reading of an earlier draft and useful comments. We would also like to thank Paul Nelson and Valentin Blomer for their feedback on this paper.
\section{Notation and 
Preliminaries}
\subsection{Notations}\label{section: Notations}
We use the standard Vinogradov notation $A\ll B$, which means that there exists a constant $c>0$ such that $\lvert A\rvert <c \lvert B \rvert $. Moreover, we write $A\ll_{\varepsilon} B$ to allow the constant to depend on $\varepsilon.$

We fix a number field $F$ and write $\oo$ for its ring of integers, $\mathfrak{d}$ for the different ideal. For each place of $F$ we write $F_v$ for the completion of $F$ at $v$. If $F_v$ is non-archimedean, we further write $\oo_v$ for the ring of integers with units $\oo_v^{\times}$, $\p_v$ for the maximal ideal generated by a uniformizer $\varpi_v$, and $\mathfrak{d}_v$ for the local different ideal generated by $\lambda_v$. We also fix a norm on $F_v$ so that $\lvert \varpi_v\rvert=N(\p_v)^{-1}$, where we use the standard notation that $N(\p)\defeq [\oo: \, \p]$. Finally normalise the Haar measure on $F_v$ and $F_v^{\times}$ so that $\text{vol}(\oo_v)=1$ and $\text{vol}(\mathfrak{o}_v^{\times})=1$, respectively. 

When $F_v$ is archimedean (either $\R$ or $\C$), we fix a norm on $F_v$ ($\lvert\cdot \rvert_{\R}$ or $\lvert\cdot \rvert_{\R}^2$ respectively) and take the standard Lebesgue measure $\mathrm{d}x$ on $F_v$ and a Haar measure on $F_v^{\times}$ defined by $\mathrm{d}^{\times} x\defeq \frac{\mathrm{d}x}{\lvert x\rvert}$.

Write $\A=\A_F$ for the ring of adeles of $F$ with a fixed additive character $\psi=\psi_{\Q} \circ \text{tr}_{\A_{\Q}\bs \A_F}$ on $F\bs \A$ where $\psi_{\Q}$ is the standard additive character on $\Q\bs \A_{\Q}$.

We write $\xi=\xi_F$ for the completed Dedekind zeta function for the fixed number field $F$ and $\zeta$ for its finite part. The completed zeta function has local factors $\xi_v(s)=\zeta_v(s)=(1-N(\p_v)^{-s})^{-1}$ for a non-archimedean place $v\nmid \mathfrak{d}$, $\xi_v(s)=\zeta_v(s)=N(\mathfrak{d})^{s/2}(1-N(\p_v)^{-s})^{-1}$ for $v\mid \mathfrak{d}$, and $\xi_v(s)=\Gamma_{F_v}(s)$ for an archimedean place $v$, where $\Gamma_\R(s)=\pi^{-s/2} \Gamma(s/2)$ and $\Gamma_\C(s)=2 (2\pi)^{-s}\Gamma(s).$

Similarly, for $\pi$ an automorphic representation of $\mathrm{GL}_2(\A)$  we let $L(s,\pi)$ be an $L$--function attached to $\pi$. We can write it in terms of an Euler product
\[L(s,\pi)=\prod_{v<\infty}(1-\alpha_{1,v} N(\p_v)^{-s})^{-1}(1-\alpha_{2,v} N(\p_v)^{-s})^{-1},\]
where we call $\alpha_{1,v}$, $\alpha_{2,v}$ the local Satake parameters of $\pi$ at $v$. Finally, we write $\Lambda(s,\pi)$ for the completed $L$--function of $\pi$.

For any factorable global function $H$ (e.g., $\xi$ or $\Lambda$) defined by the Euler product $H=\prod_v H_v$ and an ideal $\q$ we write $H_\q\defeq \prod_{v\mid \q} H_v$. Define the prime ideal function
\begin{equation}\label{eq: ideal omega function}
    \omega_F(\q)\defeq  \#\{ \p\mid \q: \; \p \;\;\mathrm{prime\, ideal}\}.
\end{equation}
Finally, we let $\theta$ be the best bound towards the Generalised Ramanujan Conjecture for $\GL(2)$ over the number field $F$, we have $0\leq \theta\leq 7/64$ \cite[Theorem 1.]{BloBru}.
\subsection{Matrix groups}\label{section: Matrix gps}
For any base ring $R$, we write 
\[\G(R)\defeq \mathrm{PGL}_2(R)\]
for the projective linear group, $\N(R):=\big(\begin{smallmatrix}
            1&R\\
            &1
\end{smallmatrix}\big)$ for the maximal unipotent subgroup of $\G(R)$ and $\mathrm{A}(R):=\big(\begin{smallmatrix}
            R^{\times}&\\
            &1
\end{smallmatrix}\big)\subset \G(R)$ for the diagonal subgroup of $\G(R)$. We identify $\N(R)\cong R$ and $\mathrm{A}(R)\cong R^{\times}$ which, in the case when $R=F_v$ or $R=\A$, naturally define Haar measures and character groups. We also define the maximal compact subgroups of $\G(R)$
\[\mathrm{K}(R)=\begin{cases}
    \G(\oo_v);\quad &\text{if $F_v$ is a non-archimedean local field,}\\
    \mathrm{PO}_2;\quad &\text{if $R=\R$,}\\
    \mathrm{PU}_2;\quad &\text{if $R=\C$,}\\
    \prod_v \mathrm{K}(F_v);\quad &\text{if $R=\A$,}
\end{cases}\]
which come with the associated probability Haar measures, denoted by $\mathrm{d}k$.
Finally, we write $\mathbb{X}\defeq \G(F) \bs \G(\A)$ for the automorphic quotient.

We will be integrating on the quotient space $\N(\A)\bs \G(\A).$ For a factorable integrand, which will be the case in our work, we decompose the space as $\N(\A)\bs \G(\A)=\prod_v \N(F_v)\bs \G(F_v)$. For each place, we can either use the Iwasawa decomposition
\[\int_{\N(F_v)\bs \G(F_v)}\, H(g)\,\mathrm{d}g=\int_{F_v^{\times}}\int_{\mathrm{K}(F_v)} H\left(  \begin{pmatrix}
    y&\\
    &1
\end{pmatrix}k\right)\,\mathrm{d}k\,\frac{\mathrm{d}^{\times}y}{\lvert y\rvert}\]
or a Bruhat decomposition
\begin{equation}\label{eq: Bruhat decomp}
    \int_{\N(F_v)\bs \G(F_v)}H(g)\, \mathrm{d}g=\int_{F_v^{\times}}\int_{F_v} H\left(\begin{pmatrix}
    y&\\
    &1
\end{pmatrix}\begin{pmatrix}
    1&\\
    c&1
\end{pmatrix}\right) \mathrm{d}c\,\frac{\mathrm{d}^{\times}y}{\lvert y\rvert}.
\end{equation}

Finally, for a non-archimedean place $v$ and any $r_v >0$ define the Hecke congruence subgroups of $\mathrm{K}(F_v)$
\begin{equation}\label{eq: def of cong subgp}
    \mathrm{K}_{\p_v^{r_v}}=\Big\{k\in \mathrm{K}(F_v):\;\; k \equiv \begin{pmatrix}
            *&*\\
            0&*
        \end{pmatrix}\, (\mathrm{mod}\,\, \p_v^{r_v})  \Big\}.
\end{equation}
which has volume
\[\mathrm{vol}(\mathrm{K}_{\p_v^{r_v}})=N(\p_v)^{-r_v} \frac{\zeta_v(2)}{\zeta_v(1)}\]
with respect to the probability Haar measure on $\mathrm{K}(F_v)$.

\subsection{Automorphic representations}\label{section: Autreps}
We briefly introduce the notation for automorphic representations that we use; for more details, see \cite{JacLang}. Recall the definition of the automorphic quotient, $\mathbb{X}= \G(F) \bs \G(\A)$. By $\widehat{\mathbb{X}}$ we will denote the isomorphism classes of irreducible, unitary representations appearing in the spectral decomposition of $L^2(\mathbb{X})$. It consists of a discrete spectrum of cuspidal representations, and Dirichlet characters $\chi(\mathrm{det})$, and the continuous Eisenstein series spectrum. We attach to $\widehat{\mathbb{X}}$ a natural Plancherel measure $\mathrm{d}\mu_{\mathrm{aut}}$ which is compatible with the invariant probability measure on $\mathbb{X}$. Write $\widehat{\mathbb{X}}_{\mathrm{gen}}$ for the subclass of the generic representations (see \ref{section: Whittaker}), and $\widehat{\mathbb{X}}_{\mathrm{gen}}^{\mathrm{unr}}$ for the subclass of generic representations unramified at all places. Finally, write $C(\pi)$ for the analytic conductor of $\pi$, $\mathfrak{C}_f(\pi)$ for the conductor ideal and $C_{\infty}(\pi)$ for the archimedean conductor, which satisfy the relation $C(\pi)=N(\mathfrak{C}_f(\pi))C_{\infty}(\pi).$

\subsection{Eisenstein series}\label{section: Eis}
We give a very brief overview of the Eisenstein series; for more details, see \cite[Chapter VII]{JacLang}. We start with a Schwartz-Bruhat function $\Phi\in\mathcal{S}(\A^2)$ and construct the associated principal series vector
\begin{equation*}
    f_{\Phi,s}\in \mathcal{I}(s)\defeq \mathrm{Ind}_{\N(\A)\mathrm{A}(\A)}^{\G(\A)}(\lvert \cdot\rvert^s \otimes \lvert\cdot\rvert^{-s})
\end{equation*}
by
\begin{equation}\label{eq: def of princ series vec}
    f_{\Phi,s}(g)\defeq \lvert \mathrm{det}(g)\rvert^s \int_{F^{\times}\bs \A^{\times}} \Phi\left(t (0\; 1)g\right)\lvert t\rvert^{2s}\, \mathrm{d}^{\times}t.
\end{equation}
The integral is defined and absolutely convergent for $\mathrm{Re}(s)>1/2$. We drop the $\Phi$ from the subscript if it is clear from the context.
The vector $f_s\in \mathcal{I}(s)$ satisfies the transformation property
\[ f_{s}\left(\diag{y}g\right)=\lvert y\rvert^s f_s(g).\]
For any $\Phi\in \mathcal{S}(\A^2)$ we define the Fourier transform 
\[\hat{\Phi}(x_1,x_2)\defeq \int_{\A^2} \Phi(u_1,u_2) \psi(u_1x_1+u_2x_2)\,\mathrm{d}u_1\,\mathrm{d}u_2\]
with respect to the character $\psi$ (defined in Section \ref{section: Notations}). As before, we construct the corresponding principal series vector
\[\hat{f}_{s}(g)\defeq f_{\hat{\Phi},s}(g)\in \mathcal{I}(s).\]
Finally, define the Eisenstein series attached to $f_s$ by
\[\mathrm{Eis}(f_s)(g)\defeq \sum_{\gamma\in \N(F)\mathrm{A}(F)\bs \G(F)} f_s(\gamma g)\]
which is absolutely convergent for $\mathrm{Re}(s)>1$ and has a meromorphic continuation to all of $\C$ with at most simple poles at $0$ and $1$. It has a constant term given by 
\[f_s(g)+\tilde{f}_s(g),\]
where
\begin{equation}\label{eq: def second constant term of Eis}
    \tilde{f}_s(g)\defeq \hat{f}_{1-s}\left(w(g^t)^{-1}\right)\in \mathcal{I}(1-s),
\end{equation}
and $w\defeq\big(\begin{smallmatrix}
    & 1\\
    1& 
\end{smallmatrix}\big)$.
\subsection{Whittaker model}\label{section: Whittaker}
For any non-trivial unitary character $\psi$ of $N$ define the space of Whittaker functions
\[\mathcal{W}(\G,\psi)\defeq \{W\in C^{\infty}(\G)\; \text{of moderate growth}\;\lvert\; W(ng)=\psi(n)W(g)\; \forall n\in N\;\forall g\in \G\}. \]
We say an automorphic representation $\pi$ is generic if it injects into $\mathcal{W}(\G(\A),\psi)$. If that is the case, we call $\mathcal{W}(\pi,\psi)$ the Whittaker model of $\pi$. Similarly, we call a local representation $\pi_v$ generic if it injects into $\mathcal{W}(\G(F_v),\psi)$ and denote the image by $\mathcal{W}(\pi_v,\psi_v)$. Recall that any automorphic representation $\pi$ is decomposable into local representations $\bigotimes_v \pi_v$, and if $\pi$ is generic, then so is $\pi_v$ for any $v$. When $F_v$ is non-archimedean, $\psi_v$ is unramified and $W_{\pi_v}\in \mathcal{W}(\pi_v,\psi_v)$ is the arithmetically normalised newvector \emph{i.e.} satisfying $W_{\pi_v}(1)=1$, then we have an explicit formula \cite[Theorem 4.1.]{Miyauchi}
\begin{equation}\label{eq: Miyauchi}
    W_{\pi_v} \left(\diag{\varpi_v^n}\right)=\begin{cases}
    N(\p_v)^{-n/2}\;\,\frac{\alpha_1^{n+1}-\alpha_2^{n+1}}{\alpha_1-\alpha_2},\quad & \text{if $n\geq 0$,}\\
    0,& \text{otherwise,}
\end{cases} 
\end{equation}
where $\alpha_1,\alpha_2$ are the Satake parameters of $\pi_v$, as discussed in Section \ref{section: Notations}. If $\psi_v $ is ramified we shift the newvector $W_{\pi_v}$ by $\lambda\in \oo_v$, a generator of the different ideal $\mathfrak{d}_v$, and work instead with $W_{\pi_v}^{\lambda}(g)\defeq W_{\pi_v}\left(\diag{\lambda}g\right).$ 

Finally, if $\pi_v$ is generic and unitary, we have the inner product on $\mathcal{W}(\pi_v,\psi)$ 
\[\langle W_1,W_2\rangle_{\pi_v} \defeq  \int_{F_v^{\times}} W_1\left(\begin{pmatrix}
    y& \\
    &1
\end{pmatrix}\right)\overline{W_2}\left(\begin{pmatrix}
    y& \\
    &1
\end{pmatrix}\right) \mathrm{d}^{\times}y,\]
which is $G$-invariant. We normalise it by
\begin{equation}\label{eq: Whittaker inner product}
    \langle W_1,W_2\rangle\defeq\begin{cases}
 \frac{\zeta_v(2)}{L_v(1,\pi\otimes \tilde{\pi})}\langle W_1,W_2\rangle_{\pi_v}\quad &\text{if $v$ is non-archimedean,}\\
\langle W_1,W_2\rangle_{\pi_v} & \text{if $v$ is archimedean.}
\end{cases}
\end{equation}
\subsection{Rankin--Selberg \texorpdfstring{$L$}{L}--function}\label{section: RankinSelberg}
We give a brief overview of the theory for $\mathrm{GL}(2)\times \mathrm{GL}(2)$ Rankin--Selberg $L$--function; see \cite{CogRS} for more details. Fix an irreducible cuspidal representation $\pi_0$ and let $\vph_0\in\pi_0$. Take any generic representation (cuspidal or Eisenstein) $\pi$ and a vector $\vph\in \pi$. Finally, for $f_s\in \mathcal{I}(s)$ a principal series vector we define the zeta integral
\[\Psi(f_s,\vph_0,\overline{\vph})\defeq \int_{\G(F)\bs \G(\A)} \varphi_0\,\overline{\varphi}\, \mathrm{Eis}(f_s)\]
which is absolutely convergent away from the poles of the Eisenstein series since $\vph_0$ is a cusp form. We will denote by $W_{\vph_0}$ and $W_{\vph}$ the images of $\vph_0$ and $\vph$ in their respective Whittaker models. Suppose that $W_{\vph_0}$ and $W_{\vph}$ factor into products of local Whittaker functions, $W_{\vph_0}=\prod_v W_{\vph_0,v}$, $W_{\vph}=\prod_v W_{\vph,v}$, and that $f_s$  is attached to a factorable Schwartz--Bruhat function $\Phi\in \mathcal{S}(\A^2)$. In this case, the zeta integral is Eulerian, \emph{i.e.} we can factorize it into a product of local integrals, namely
\begin{align*}
\Psi(f_s,\vph_0,\overline{\vph})=\int_{\N(\A)\bs \G(\A)} W_{\varphi_0}(g)\,\overline{W_{\varphi}(g)}\, f_s(g)\; \mathrm{d}g=\prod_v \Psi_v(f_{s,v},W_{\varphi_0,v},\overline{W_{\varphi,v}}).
\end{align*}
where we define
\begin{equation}\label{eq: def of local zeta integral}
\Psi_v(f_{s,v},W_{\varphi_0,v},\overline{W_{\varphi,v}})\defeq \int_{\N(F_v)\bs \G(F_v)} W_{\varphi_0,v}(g)\,\overline{W_{\varphi,v}(g)}\, f_{s,v}(g)\; \mathrm{d}g.
\end{equation}
In the case, $\Phi_v(x_1,x_2)=\textbf{1}_{\oo_v}(x_1)\textbf{1}_{\oo_v}(x_2)$ when $v$ is non-archimedean, or $\Phi_{v}(x_1,x_2)=e^{-\pi(x_1^2+x_2^2)}$ for a real place $v$, or $\Phi_{v}(x_1,x_2)=e^{-2\pi(\lvert x_1\rvert^2+\lvert x_2\rvert^2)}$ for a complex place $v$, and $W_{\pi_{0,v}},W_{\pi_v}$ being the arithmetically normalised newvectors, as discussed in \ref{section: Whittaker}, we have the local Rankin--Selberg $L$ function given by
\begin{equation*}
    L_v(s,\pi_0\otimes \tilde{\pi})=\int_{\N(F_v)\bs \G(F_v)} W_{\pi_{0,v}}(g)\,\overline{W_{\pi_v}(g)}\, f_{s,v}(g)\; \mathrm{d}g. 
\end{equation*}

\subsection{Regularised spectral decomposition}\label{section: RegSpecDecomp}
We follow \cite[Section 4.3]{MVGL2} for the details of this section. Firstly, we discuss the choice of norms for different representations $\pi$. Recall from Section \ref{section: Autreps} the discussion of the space $\widehat{\mathbb{X}}$. For $\pi\in \widehat{\mathbb{X}}$ and $\vph\in \pi$ we let
\[\lVert \vph \rVert_{\pi}^2=\begin{cases}
    \int_{[\G]} \lvert \vph\rvert^2\, \d g; \quad &\pi \text{ is cuspidal,}\\
    \int_{K(\A)} \lvert f\rvert^2(k)\, \d k; \quad &\pi\text{ is Eisenstein, and } \vph=\mathrm{Eis}(f), \\
    1;\quad &\pi \text{ is a Dirichlet character,}
\end{cases}\]
for more careful discussion on the norms see \cite[Section 4.1.7.]{MVGL2}

We start with the standard spectral decomposition for the functions $\Phi_1,\Phi_2\in C_c^{\infty}(\mathbb{X})$ with respect to the inner product on $\mathbb{X}$
\begin{align}\label{eq: specdecomp}
    \langle \Phi_1,\Phi_2 \rangle =\int_{\widehat{\mathbb{X}}}  \sum_{\vph\in \mathcal{B}(\pi)} \frac{1}{\lVert \vph\rVert_{\pi}^2} \langle \Phi_1,\vph \rangle \langle \vph,\Phi_2 \rangle\, \d\mu_{\mathrm{aut}}(\pi) ,
\end{align}
where the definitions of $ \widehat{\mathbb{X}}$ and $\d\mu_{\mathrm{aut}}(\pi)$ are introduced in Section \ref{section: Autreps}, $\mathcal{B}(\pi)$ is an orthogonal basis of factorable vectors of $\pi$, such that for a generic $\pi$, and $\vph=\bigotimes_v \vph_v\in \mathcal{B}(\pi)$, the image of every $\vph_v$ in $\mathcal{W}(\pi_v,\psi_v)$ is arithmetically normalised, see Section \ref{section: Whittaker}.

In the case when $\Phi_1$ is a function of polynomial growth, and $\pi$ is an Eisenstein series
the inner product $\langle \Phi_1,\mathrm{Eis}(f) \rangle$ does not necessarily converge. In such a case, we define the regularised inner product. For a function $\Phi$ on $\mathbb{X}$ of polynomial growth, define the regularised integral of $\Phi$
\[\int^{\mathrm{reg}}_{\mathbb{X}} \Phi\defeq \int_{\mathbb{X}}\Phi-\mathcal{E}\]
where $\mathcal{E}$ is an appropriately chosen Eisenstein series (for more details see \cite[Section 4.3.5.]{MVGL2}). For this to be well defined, we require mild conditions that $\Phi$ does not have a term that grows like a quadratic twist of $\lvert . \rvert$ near the cusp. This makes $\Phi-\mathcal{E}\in L^1(\mathbb{X})$, and the integral is independent of the choice of regularisation. Similarly, we can define the regularised inner product
\[\langle \Phi_1,\Phi_2\rangle_{\mathrm{reg}} \defeq \int^{\mathrm{reg}}_{\mathbb{X}} \Phi_1 \overline{\Phi_2}\]
which satisfies the regularised spectral decomposition
\begin{align}\label{eq: reg spec decomp}
    \langle \Phi_1,\Phi_2 \rangle_{\mathrm{reg}} = \langle \mathcal{E}_1,\Phi_2 \rangle_{\mathrm{reg}}+\langle \Phi_1,\mathcal{E}_2 \rangle_{\mathrm{reg}}+\int_{\widehat{\mathbb{X}}} \sum_{\vph\in \mathcal{B}(\pi)}\frac{1}{\lVert \vph\rVert_{\pi}^2}\langle \Phi_1,\vph \rangle_{\mathrm{reg}} \langle \vph,\Phi_2 \rangle_{\mathrm{reg}}\, \d\mu_{\mathrm{aut}}(\pi)
\end{align}
where $\mathcal{E}_i$, as above, are Eisenstein series appearing in regularised integrals of $\Phi_i$, satisfying $\Phi_i-\mathcal{E}_i\in L^1(\mathbb{X})$.

\subsection{Spectral weight}\label{section: Spectral Weight}
For the spectral weight 
\begin{equation}
    J_\q(\pi)\defeq N(\q) \frac{\zeta_{\q}^3(1)}{\zeta_{\q}(2)} \prod_{v\mid \q} J(f_{1/2,v}(.x_v), W_{0,v}, \pi_{v})
\end{equation}
we define the renormalised version as in equation \eqref{eq: def of spec weight}. We renormalise it for convenience
\[J_{0,\q}(\pi)\defeq  J_\q(\pi) \frac{\zeta_\q(2)}{L_\q(1,\pi_0\otimes \tilde{\pi}_0)} \lvert L_\q(1/2,\tilde{\pi}\otimes\pi_0)\rvert^2,\]
which satisfies 
\[J_\q(\pi)\asymp J_{0,\q}(\pi)\]
with the implied constant independent of $\pi$, $\pi_0$ and $\q$. We write $\widehat{\G(F_v)}$ for the tempered unitary dual of $\G(F_v)$ and we equip it with a local Plancherel measure $\mathrm{d}\mu_{\mathrm{loc}}$ which is compatible with the Haar measure on $\G(F_v)$, see \cite[Section 4.13.2]{BrumMil}. We have
\begin{align*}
    \prod_{v\mid \q} \int_{\widehat{\G(F_v)}}& J_{0,\q}(\pi) \,\mathrm{d}\mu_{\mathrm{loc}}(\pi_v)= \\
    &\mathrm{vol}^{-1}\, (K_\q) \prod_{v\mid \q} \frac{\zeta_v(2)}{L_v(1,\pi_0\otimes \tilde{\pi}_0)}\int_{\widehat{\G(F_v)}} \sum_{W_v\in \mathcal{B}(\pi_v)} \Big\lvert \int_{\N(F_v)\bs \G(F_v)} f_{1/2,v}(.x_v) W_{0,v} \overline{W}_v \Big \lvert^2 \,\mathrm{d} \mu_{\mathrm{loc}}(\pi_v).
\end{align*} 
By the Whittaker-Plancherel formula (see \cite[Chapter 15]{Wallach}) we have for each $v\mid \q$
\[\int_{\widehat{\G(F_v)}} \sum_{W_v\in \mathcal{B}(\pi_v)} \Big\lvert \int_{\N(F_v)\bs \G(F_v)} f_{1/2,v}(.x_v) W_{0,v} \overline{W}_v \Big \lvert^2 \,\mathrm{d} \mu_{\mathrm{loc}}(\pi_v)=\int_{\N(F_v)\bs \G(F_v)} \lvert f_{1/2,v}(.x_v)\rvert^2 \lvert W_{0,v}\rvert^2\]
and using Proposition \ref{Prop: Local Zeta integrals} (i) for $s_1=s_2=0$ we get
\[\int_{\N(F_v)\bs \G(F_v)} \lvert f_{1/2,v}(.x_v)\rvert^2 \lvert W_{0,v}\rvert^2=\frac{L_v(1,\pi_0\otimes\tilde{\pi}_0)}{\zeta_v(2)}.\]
Combining the above calculations, we obtain
\[\prod_{v\mid \q} \int_{\widehat{\G(F_v)}} J_\q(\pi) \,\mathrm{d}\mu_{\mathrm{loc}}(\pi_v)\asymp \prod_{v\mid \q} \int_{\widehat{\G(F_v)}} J_{0,\q}(\pi) \,\mathrm{d}\mu_{\mathrm{loc}}(\pi_v)= \mathrm{vol}^{-1}\, (K_\q).\]
\subsection{Main Theorem}
We are now ready to state the main theorem of this paper
\begin{theorem}\label{Theorem}
    Suppose that $\pi_0\in \hat{\mathbb{X}}_{\mathrm{gen}}^{\mathrm{unr}}$ is a fixed cuspidal unitary representation that is unramified at all places. For each unitary automorphic representation $\pi\in \hat{\mathbb{X}}_{\mathrm{gen}}$ and an integral ideal $\q\subset F$ coprime to the different ideal of $F$, there exists a non-negative weight function $J_{\q}(\pi)$ (defined in \eqref{eq: def of spec weight}) such that
    \begin{itemize}
        \item $J_{\q}(\pi)=0$ if $\mathfrak{C}(\pi)\nmid \q$,\vspace{0.3pc}
        \item $J_{\q}(\pi)\gg_{\varepsilon} (1-\varepsilon)^{\omega_F(\q)},$ if $\mathfrak{C}(\pi)\mid \q$
        \item $\prod_{v\mid \q} \int_{\widehat{\G(F_v)}} J_\q(\pi) \,\mathrm{d}\mu_{\mathrm{loc}}(\pi_v)\asymp \mathrm{vol}^{-1}\, (K_\q)$ \vspace{0.3pc}
    \end{itemize}
where $\omega_F(\q)$ as in \eqref{eq: ideal omega function} is the prime ideal omega function. Moreover, we have an asymptotic expansion
\begin{align*}
    \int_{\hat{\mathbb{X}}_{\mathrm{gen}} } &\frac{\lvert \Lambda (1/2,\tilde{\pi}\otimes \pi_0)\rvert^2}{\ell(\pi)} J_\q (\pi) \, \mathrm{d}\mu_{\mathrm{aut}}(\pi)=\mathrm{vol}^{\,-1}(K_\q)\Bigg(\frac{\xi^*(1)^3 N(\mathfrak{d})\Lambda(1,\pi_0,\mathrm{Ad})}{3\xi(2)}  \log^3 N(\q)+\\
    &+A_{1,\q} \log^2N(\q)
  +A_{2,\q} \log N(\q)+A_{3,\q}+\mathcal{O}_{F,\varepsilon,\pi_0}\left(N(\q)^{-1/2+\theta+\varepsilon}\right) \Bigg),
\end{align*}
where $\ell(\pi)$ is independent of $\q$ (defined in Section \ref{section: Calculations on the spectral side}), $\xi^*(1)$ is the residue of $\xi$ at $1$, $\theta$ is the spectral gap for $\G$ over $F$, $\mathrm{vol}^{\,-1}(K_\q)=N(\q)\frac{\zeta_{\q}(1)}{\zeta_{\q}(2)}$, and each $A_{j,\q}$ is an explicit $\mathfrak{q}$-dependent term satisfying $A_{j,\q}\ll_{F,\pi_0} \min \{ \omega_F(\q)^{j+2},(\log \log \mathfrak{r})^{j+2}\}$.
\end{theorem}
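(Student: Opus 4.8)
The three structural properties of $J_\q$ are essentially local. The vanishing off $\{\mathfrak{C}(\pi)\mid\q\}$ and the lower bound $J_\q(\pi)\gg_\varepsilon(1-\varepsilon)^{\omega_F(\q)}$ on the family will follow from Proposition~\ref{Prop: Lower bound on specwght} by evaluating the local period $J(f_{1/2,v}(.x_v),W_{0,v},\pi_v)$ place by place against the newvector using the explicit Whittaker values~\eqref{eq: Miyauchi}; non-negativity is built into the definition of $J_\q$ as a squared Whittaker period. The third bullet is the Whittaker--Plancherel computation already carried out in Section~\ref{section: Spectral Weight}: at each $v\mid\q$ the local spectral integral collapses to $\int_{\N(F_v)\bs\G(F_v)}|f_{1/2,v}(.x_v)|^2|W_{0,v}|^2=L_v(1,\pi_0\otimes\tilde\pi_0)/\zeta_v(2)$ by Proposition~\ref{Prop: Local Zeta integrals}(i), and the normalising factors multiply to $\mathrm{vol}^{-1}(K_\q)$.

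For the asymptotic expansion the plan is to run the identity~\eqref{eq: inner product identity} with two complex shifts: for small $s_1,s_2$ one has
\[
\langle\vph_0\,\mathrm{Eis}(f_{1/2+s_2})(.x),\ \vph_0\,\mathrm{Eis}(f_{1/2+s_1})(.x)\rangle
=\langle\overline{\mathrm{Eis}(f_{1/2+s_1})}(.x)\,\mathrm{Eis}(f_{1/2+s_2})(.x),\ |\vph_0|^2\rangle.
\]
Expanding the left side by the Plancherel formula~\eqref{eq: specdecomp} and unfolding each factor $\vph_0\,\mathrm{Eis}(f_{1/2+s_j})(.x)$ via the Rankin--Selberg theory of Section~\ref{section: RankinSelberg} turns it into a spectral integral over $\hat{\mathbb{X}}$ whose generic part has integrand $\Lambda(1/2+\overline{s_1},\pi\otimes\pi_0)\,\Lambda(1/2+s_2,\tilde\pi\otimes\pi_0)/\ell(\pi)$ weighted by $J_\q(\pi)$ (the one-dimensional spectrum being treated separately and shown negligible). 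At $s_1=s_2=0$, using self-duality of $\pi_0$, this integrand is $|\Lambda(1/2,\tilde\pi\otimes\pi_0)|^2/\ell(\pi)$, so the left-hand side of the identity at $s_1=s_2=0$ is exactly the left-hand side of the claimed expansion, and the whole task is to evaluate the right-hand side of the identity.

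The right-hand side is expanded by the regularised spectral decomposition~\eqref{eq: reg spec decomp}, producing~\eqref{eq: intro reg spec decomp}: a degenerate term $\langle\mathcal{E}_{s_1,s_2}(.x),|\vph_0|^2\rangle$ built from the four constant terms of $\overline{\mathrm{Eis}(f_{1/2+s_1})}\,\mathrm{Eis}(f_{1/2+s_2})$, and the genuine spectral integral. For the degenerate term I would compute the four local zeta integrals against the constant terms explicitly, tracking the dependence on the translate $x_v$ (Proposition~\ref{Prop: Local Zeta integrals}), multiply them, and group the outcome as
\[
\sum_{\epsilon_1,\epsilon_2\in\{\pm1\}}N(\q)^{\epsilon_1\overline{s_1}+\epsilon_2 s_2}\,F(\epsilon_1\overline{s_1},\epsilon_2 s_2),
\]
with $F$ depending mildly on $\q$ and holomorphic apart from a pole of order three at $s_1=s_2=0$. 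Although each summand has a triple pole, the poles cancel in the sum; inserting the Taylor expansion $N(\q)^{\epsilon_1\overline{s_1}+\epsilon_2 s_2}=\exp\big((\epsilon_1\overline{s_1}+\epsilon_2 s_2)\log N(\q)\big)$ together with the Laurent expansion of $F$ about the origin yields, as $s_1,s_2\to0$, a polynomial in $\log N(\q)$ of degree exactly three (Proposition~\ref{Prop: Degenerate term}). The top coefficient comes from the leading Laurent coefficient of $F$, which produces $\xi^*(1)^3 N(\mathfrak{d})\Lambda(1,\pi_0,\mathrm{Ad})/(3\xi(2))$, while the $A_{j,\q}$ are explicit finite sums of products of $\log N(\p_v)$, $v\mid\q$, arising from logarithmic derivatives of the local factors, whence $A_{j,\q}\ll\min\{\omega_F(\q)^{j+2},(\log\log\mathfrak{r})^{j+2}\}$. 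The delicate point is the bookkeeping: one must keep the archimedean $\Gamma$-factors and the regularising Eisenstein series $\mathcal{E}_{s_1,s_2}$ straight, and verify that the purely $\q$-dependent Laurent data does not perturb the leading coefficient.

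The spectral integral in~\eqref{eq: intro reg spec decomp} gives the error term, and this is where the bulk of the work lies. First, in Proposition~\ref{Lem: Regularised triple product calc}, one identifies the regularised triple period $\langle\overline{\mathrm{Eis}(f_{1/2+s_1})}(.x)\,\mathrm{Eis}(f_{1/2+s_2})(.x),\vph_\pi\rangle_{\mathrm{reg}}$ via a regularised Zagier-type unfolding with $\sqrt{L(1/2,\pi\otimes\pi_0\otimes\tilde\pi_0)}\,\sqrt{L(1/2,\pi\otimes\mathcal{I}(1/2)\otimes\mathcal{I}(1/2))}$ times explicit local data; the crucial gain is that the local factors at $v\mid\q$, where $f_{1/2,v}(.x_v)$ is the $x_v$-translate of the $K_0(\p_v)$-indicator, contribute a genuine power saving $N(\q)^{-1/2+\theta+\varepsilon}$ rather than merely $N(\q)^\varepsilon$ — this is precisely where the $x_v$-translation detecting ramification pays off. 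Then, in Proposition~\ref{Prop: Regularised term}, one combines this pointwise saving with the second period $\langle\vph_\pi,|\vph_0|^2\rangle$ (a triple product bounded via Ichino--Watson and convexity) and sums over $\pi$, using Bessel's inequality for the fixed function $|\vph_0|^2$ together with a second-moment (or large-sieve) input for the auxiliary $L$-values and the normalisation $\int J_\q\,\d\mu_{\mathrm{aut}}\asymp\mathrm{vol}^{-1}(K_\q)$; the total comes out to $\mathrm{vol}^{-1}(K_\q)\,O_{F,\varepsilon,\pi_0}(N(\q)^{-1/2+\theta+\varepsilon})$. Inserting everything back into the identity and letting $s_1=s_2\to0$ finishes the proof. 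I expect this last stage to be the main obstacle: justifying the regularised unfolding (checking the absence of stray quadratic-twist growth near the cusp), computing the $v\mid\q$ local integrals precisely enough to extract the true $N(\q)^{-1/2}$ decay rather than just the $N(\q)^\varepsilon$ that subconvexity needs, and controlling the resulting sum over $\pi$ — which is exactly the place where $\theta$ enters and where a Ramanujan-type input would upgrade the error to full square-root cancellation.
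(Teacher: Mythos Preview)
Your overall architecture matches the paper's exactly: run the identity with two shifts, spectrally expand the left side to get the weighted second moment, regularised-spectrally expand the right, extract the polynomial in $\log N(\q)$ from the degenerate term via Proposition~\ref{Prop: Degenerate term}, and bound the remaining spectral integral via Proposition~\ref{Prop: Regularised term}. The treatment of the three bullet points and of the degenerate term is essentially what the paper does.

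There are, however, two concrete confusions in your description of the error term. First, the regularised period $\langle\overline{\mathrm{Eis}(f_{1/2+s_1})}(.x)\,\mathrm{Eis}(f_{1/2+s_2})(.x),\vph_\pi\rangle_{\mathrm{reg}}$ has nothing to do with $\pi_0$: the Zagier-type unfolding produces a Rankin--Selberg integral of the Eisenstein Whittaker function against $W_{\vph_\pi}$, yielding $L(1/2+\overline{s_1},\mathcal{I}(1/2)\times\tilde\pi)\sim\Lambda^2(1/2+\overline{s_1},\tilde\pi)$ times local data. The $\sqrt{L(1/2,\pi\otimes\pi_0\otimes\tilde\pi_0)}$ factor comes \emph{only} from the second period $\langle\vph_\pi,|\vph_0|^2\rangle$ via Ichino's formula; you have folded it into the wrong period.

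Second, your plan to sum over $\pi$ using Bessel's inequality, a large sieve, and the $J_\q$-normalisation is both harder than necessary and partly confused: $J_\q$ does not appear on the period side at all. The paper's argument is much simpler. Because $|\vph_0|^2$ is spherical at every place, only $\pi\in\widehat{\mathbb{X}}_{\mathrm{gen}}^{\mathrm{unr}}$ contribute, so $C(\pi)=C_\infty(\pi)$. One then bounds $\Lambda^2(1/2+\overline{s_1},\tilde\pi)/\lVert\vph_\pi\rVert_\pi^2\ll C(\pi)^{O(1)}$ polynomially, while the archimedean $\Gamma$-factors in the completed triple product force $\langle\vph_\pi,|\vph_0|^2\rangle\ll_N C(\pi)^{-N}$; the spectral integral over unramified $\pi$ then converges trivially by Weyl's law, and the $N(\q)^{-1/2+\theta+\varepsilon}$ saving from the local computation at $v\mid\q$ passes straight through.
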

\begin{remark}\label{remark: similarity to KMV}
    In fact, if $\q=\p^n$, where $\p$ is a prime ideal with either $N(\p)\to \infty$ or $n\to \infty$, then by Remark \ref{remark: coefficients Ajq for prime or depth}, we get that $A_{j,\q}$ are $\q$-independent constants. In this case, we can rewrite the asymptotic expansion as
    \[\int_{\hat{\mathbb{X}}_{\mathrm{gen}} } \frac{\lvert \Lambda (1/2,\tilde{\pi}\otimes \pi_0)\rvert^2}{\ell(\pi)} J_\q (\pi) \, \mathrm{d}\mu_{\mathrm{aut}}(\pi)=\mathrm{vol}^{\,-1}(K_\q)\left(P(\log N(\q))+\mathcal{O}_{F,\varepsilon,\pi_0}\left(N(\q)^{-1/2+\theta+\varepsilon}\right)\right),\]
    where $P$ is a cubic polynomial with leading coefficient $\frac{\xi^*(1)^3 N(\mathfrak{d})\Lambda(1,\pi_0,\text{Ad})}{3\xi(2)}$. This is a very similar result to  \cite[Proposition 1.6.]{KowMichVan} and in our case we improve the error term from $N(\q)^{-1/12+\varepsilon}$ to $N(\q)^{-1/2+\theta+\varepsilon}$.
\end{remark}
\begin{remark}
    Assuming the Generalised Ramanujan Conjecture for $\GL(2)$ over $F$, we can take $\theta=0$ and obtain the square root cancellation and an error term of the form $\mathcal{O}_{F,\varepsilon,\pi_0}\big(N(\q)^{-1/2+\varepsilon}\big)$.
\end{remark}
\begin{remark}
    In \cite{BlomHar}, the authors study a similar problem in the archimedean aspect; however, their main term does not resemble the one mentioned in Theorem \ref{Theorem} because their moment $\mathcal{I}(T, K)$ is not varying over all forms in the conductor family.
\end{remark}
\section{Proof of the Main Theorem}
\subsection{Set-up}\label{section: Set-up}
We start by fixing a cuspidal representation $\pi_0\in \widehat{\mathbb{X}}_{\mathrm{gen}}^{\mathrm{unr}}$ that is unramified at all places, as in the statement of the Theorem \ref{Theorem}. Recall from Section \ref{section: Notations} the choice of an additive character $\psi$ on $F\bs \A$. Let $\vph_0\in \pi_0$ be a factorable form whose Whittaker function $W_{\vph_0}=W_0=\bigoplus_{v\leq \infty} W_{0,v}\in \mathcal{W}(\pi_0,\psi)$, is a spherical vector at all places in the Whittaker model of $\pi_0$, such that $W_{0,v}(1)=1$ for all places $v$.

Let $\q=\prod_{v\mid \q} \p_v^{r_v}$ be an integral ideal tending to infinity. Define the matrix $x=(x_{v})_{v}\in \G(\A)$, where
\[x_v=\diag{\varpi_v^{-r_v}}\hspace{2mm} \text{for}\hspace{2mm} v\mid \q; \hspace{2mm} x_v=\diag{\lambda_v^{-1}} \hspace{2mm} \text{for}\hspace{2mm}v\mid \mathfrak{d}; \hspace{2mm} x_{v}=1 \hspace{2mm} \text{for}\hspace{2mm}v\nmid \q\mathfrak{d},\] where $\varpi_v$, and $\lambda_v$ are as in Section \ref{section: Notations}.

Finally, we make a choice of the Schwartz--Bruhat function $\Phi\in \mathcal{S}(\A^2)$ given locally by
\begin{itemize}
     \item $\Phi_v(x_1,x_2)=\mathbf{1}_{\mathfrak{o}_{v}}(x_1)\mathbf{1}_{\mathfrak{o}_{v}}(x_2)$ for $v<\infty$ and $v\nmid \q \mathfrak{d}$,
     \item $\Phi_{v}(x_1,x_2)=\mathbf{1}_{\mathfrak{o}_{v}}(x_1) \mathbf{1}_{\mathfrak{o}_{v}^{\times}}(x_2)$ for $v\mid \q$,
     \item $\Phi_{v}(x_1,x_2)=e^{-\pi(x_1^2+x_2^2)}$ for a real archimedean place $v$,
     \item $\Phi_{v}(x_1,x_2)=e^{-2\pi(\lvert x_1\rvert^2+\lvert x_2\rvert^2)}$ for a complex archimedean place $v$,
     \item $\Phi_v(x_1,x_2)=\mathbf{1}_{\mathfrak{o}_{v}}(\lambda_v x_1)\mathbf{1}_{\mathfrak{o}_{v}}(x_2)$ for $v\mid \mathfrak{d}$,
\end{itemize}
and take the corresponding induced vector $f_s\in \mathcal{I}(s)$ as defined in \eqref{eq: def of princ series vec}.
We take a Fourier transform of $\Phi$ with respect to the additive character $\psi$, as defined in Section \ref{section: Eis}. We then have a factorisation $\hat{\Phi}= \prod_v \hat{\Phi}_v$, where
 \begin{itemize}
     \item $\hat{\Phi}_v(x_1,x_2)=\mathbf{1}_{\mathfrak{o}_{v}}(x_1)\mathbf{1}_{\mathfrak{o}_{v}}(x_2)$ for $v<\infty$ and $v\nmid \q \mathfrak{d}$
     \item $\hat{\Phi}_{v}(x_1,x_2)=\mathbf{1}_{\mathfrak{o}_{v}}(x_1)( \mathbf{1}_{\mathfrak{o}_{v}}(x_2)-N(\p_v)^{-1} \,\mathbf{1}_{\mathfrak{o}_{v}}(\p_v x_2))$ for $v\mid \q$
     \item $\widehat{\Phi}_{v}(x_1,x_2)=e^{-\pi(x_1^2+x_2^2)}$ for a real archimedean place $v$,
     \item $\widehat{\Phi}_{v}(x_1,x_2)=e^{-2\pi(\lvert x_1\rvert^2+\lvert x_2\rvert^2)}$ for a complex archimedean place $v$,
     \item $\hat{\Phi}_v(x_1,x_2)=\mathbf{1}_{\mathfrak{o}_{v}}(x_1)\mathbf{1}_{\mathfrak{o}_{v}}(\lambda_v x_2)$ for $v\mid \mathfrak{d}$.
 \end{itemize}
We start with the equality of the inner products
\[\langle \vph_0 \mathrm{Eis}(f_{1/2})(.\, x), \vph_0 \mathrm{Eis}(f_{1/2})(.\, x) \rangle=\langle \lvert \mathrm{Eis}(f_{1/2})\rvert^2(.\, x),  \lvert\vph_0\rvert^2\rangle,\]
where, because $\vph_0$ is a cuspidal vector, both of the inner products above are absolutely convergent.
\subsection{Calculations on the spectral side}\label{section: Calculations on the spectral side}
Let $\Re(s)$ be sufficiently positive.
We use the spectral decomposition \eqref{eq: specdecomp} to write
\begin{equation}\label{eq: spectral side specdecomp}
    \langle \vph_0 \mathrm{Eis}(f_{s})(.\, x), \vph_0 \mathrm{Eis}(f_{s})(.\, x) \rangle= \int_{\hat{\mathbb{X}}} \sum_{\vph\in \mathcal{B}(\pi)} \frac{1}{\lVert \vph\rVert_{\pi}^2} \lvert \Psi(f_{s}(.x),\vph_0,\overline{\vph})\rvert^2 \, \mathrm{d}\mu_{\text{aut}}(\pi),
\end{equation}
where there is no contribution from the non-generic spectrum; see \cite[Lemma 4.1]{JanaGLn}, and for a generic $\pi$ we take the sum to run over an arithmetically normalised orthogonal basis of factorable vectors, see Section \ref{section: Whittaker}. The only terms $\vph_v\in \mathcal{B}(\pi_v)$ for $v\nmid \q$ that contribute to the sum are the spherical vectors because both $f_{s,v}$ and $\vph_{0,v}$ are spherical at these places. By Schur's Lemma there is a constant $\ell_\q(\pi)$, so that
\[\lVert \vph\rVert_{\pi}^2=\ell_\q(\pi) \prod_{v\mid \q} \lVert W\rVert^2.\]
But in fact, by the choice of our inner product on $W(\pi_v,\psi_v)$, see Section \ref{section: Whittaker}, we have that $\ell_\q(\pi)=\ell(\pi)$ is independent of the ideal $\q$, where for a cuspidal representation $\ell(\pi)\asymp L(1,\pi,\mathrm{Ad})$; see \cite[Lemma 4.1]{JanaNunesRec} for a more careful discussion. Therefore, the right-hand side of equation \eqref{eq: spectral side specdecomp} transforms to
\[\langle \vph_0 \mathrm{Eis}(f_{s})(.\, x), \vph_0 \mathrm{Eis}(f_{s})(.\, x) \rangle=\int_{\hat{\mathbb{X}}_{\mathrm{gen}}} \frac{1}{\ell(\pi)} \sum_{W_{\vph}\in \mathcal{B}(\pi)} \lvert \Psi(f_{s}(.x),W_0,\overline{W_{\vph}})\rvert^2 \, \mathrm{d}\mu_{\text{aut}}(\pi),\]
where the sum runs over the orthonormal basis with respect to the inner product as defined in \eqref{eq: Whittaker inner product}. Because of the choice of factorable vectors, the sum decomposes as
\[\sum_{W_{\vph}\in \mathcal{B}(\pi)} \lvert \Psi(f_{s}(.x),W_0,\overline{W_{\vph}})\rvert^2=\prod_v \sum_{W_{\vph,v}\in \mathcal{B}(\pi_v)} \lvert \Psi(f_{s,v}(.x_v),W_{0,v},\overline{W_{\vph,v}})\rvert^2.\]

For places $v\nmid \q$, the corresponding local factors can be simplified significantly. In this case, $f_{s,v}$ and $W_{0,v}$ are spherical vectors, so the only non-zero contribution comes from representations $\pi_v$ that are unramified. Then we have
\[ \sum_{W_{\vph,v}\in \mathcal{B}(\pi_v)} \lvert \Psi(f_{s,v}(.x_v),W_{0,v},\overline{W_{\vph,v}})\rvert^2=\lvert \Psi(f_{s,v},W_{0,v},\overline{W_{\pi_v}})\rvert^2 =\lvert \Lambda_v(s,\pi_0\otimes \tilde{\pi})|^2,\]
where $W_{\pi_v}$ is the arithmetically normalised spherical vector of $\pi_v$, as in \ref{section: Whittaker}, and we get an $L$--function as in Section \ref{section: RankinSelberg}. For $v\mid \q$ we define the sum
\[J(f_{s,v}(.x_v), W_{0,v}, \pi_{v})\defeq \frac{1}{\lvert L_v(s,\tilde{\pi}\otimes \pi_0)\rvert^2} \sum_{W_{v}\in \mathcal{B}(\pi_v)}\lvert \Psi_v(f_{s,v}(.x_v), W_{0,v},\overline{W_v})\rvert^2.\]
We combine the normalised factors for $v\mid \q$, take $s=1/2$ and define the local spectral weight as
\begin{equation}\label{eq: def of spec weight}
    J_\q(\pi)\defeq N(\q) \frac{\zeta_{\q}^3(1)}{\zeta_{\q}(2)} \prod_{v\mid \q} J(f_{1/2,v}(.x_v), W_{0,v}, \pi_{v}).
\end{equation}
We call the product above the local spectral weight. We will show in Proposition \ref{Prop: Lower bound on specwght} that the local spectral weight satisfies 
\[J_\q(\pi)\gg_{\varepsilon} (1-\varepsilon)^{\omega_F(\q)} ,\]
for any $\varepsilon>0$ where $\omega_F(\q)$ is defined in \eqref{eq: ideal omega function}.
 Coming back to the inner product, we arrive at the identity
\[N(\q) \frac{\zeta_{\q}^3(1)}{\zeta_{\q}(2)}\langle \vph_0 \mathrm{Eis}(f_{1/2})(.\, x), \vph_0 \mathrm{Eis}(f_{1/2})(.\, x) \rangle=\int_{\hat{\mathbb{X}}_{\mathrm{gen}}} \frac{\lvert \Lambda(1/2,\pi_0\otimes \tilde{\pi})\rvert^2}{\ell(\pi)}J_\q(\pi)\, \mathrm{d}\mu_{\text{aut}}(\pi),\]
which is the spectral side stated in Theorem \ref{Theorem}.

\subsection{Calculations on the period side}\label{section: Calculations on the period side}
In this section we deal with
\[\langle \lvert \mathrm{Eis}(f_{1/2})\rvert^2(.\, x), \lvert \vph_0\rvert^2\rangle.\]
As before, we want to start with the spectral decomposition. This time, however, the square of the Eisenstein series fails to be in $\mathrm{L}^1(\mathbb{X})$. Because of this, when we try to expand the inner product as 
\[\langle \lvert \mathrm{Eis}(f_{1/2})\rvert^2(.\, x), \lvert \vph_0\rvert^2\rangle=\int_{\pi} \sum_{\vph\in \mathcal{B}(\pi)} \frac{1}{\lVert \vph\rVert_{\pi}^2} \langle \lvert \mathrm{Eis}(f_{1/2})\rvert^2(.\, x), \vph\rangle\langle \vph, \lvert \vph_0\rvert^2\rangle,\]
the expression $\langle \lvert \mathrm{Eis}(f_{1/2})\rvert^2(.\, x), \vph\rangle$ is not defined when $\vph$ is an Eisenstein series. Because of that, we want to use the regularised version of the spectral decomposition, see \cite[eq. 4.20]{MVGL2}.
Before that, we want to deform the Eisenstein series away from $s=1/2$ as below.
\[\overline{\mathrm{Eis}(f_{1/2+s_1})}\mathrm{Eis}(f_{1/2+s_2})\]
for some non-zero $s_1,s_2$ with sufficiently small real parts satisfying $\overline{s_1}\pm s_2\neq 0$. We define the \emph{degenerate term} to be the Eisenstein series formed out of the constant terms of the product above, namely,
\begin{equation}\label{eq: Eis series out of constant terms}
    \mathcal{E}_{s_1,s_2}\defeq \mathrm{Eis}(\overline{f_{1/2+s_1}}f_{1/2+s_2}+\overline{\Tilde{f}_{1/2+s_1}}f_{1/2+s_2}+\overline{f_{1/2+s_1}}\Tilde{f}_{1/2+s_2}+\overline{\tilde{f}_{1/2+s_1}}\tilde{f}_{1/2+s_2});
\end{equation}
see \eqref{eq: def second constant term of Eis} for the definition of $\tilde{f}.$ The regularisation makes
\[\overline{\mathrm{Eis}(f_{1/2+s_1})}\mathrm{Eis}(f_{1/2+s_2})-\mathcal{E}_{s_1,s_2}\]
integrable as discussed in Section \ref{section: RegSpecDecomp}, and we obtain the regularised spectral decomposition
\begin{equation}\label{eq: period side reg spec decomp}
\begin{split}
    \langle \overline{\mathrm{Eis}(f_{1/2+s_1})}\mathrm{Eis}(f_{1/2+s_2})(.x),&\lvert \vph_0\rvert^2 \rangle =\langle \mathcal{E}_{s_1,s_2}(.\,x),\lvert \vph_0\rvert^2 \rangle+\\
   &+\int_{\widehat{\mathbb{X}}} \sum_{\vph\in \mathcal{B}(\pi)}\frac{1}{\lVert \vph\rVert_{\pi}^2}\langle \overline{\mathrm{Eis}(f_{1/2+s_1})}\mathrm{Eis}(f_{1/2+s_2})(.x),\vph \rangle_{\mathrm{reg}} \langle \vph,\lvert \vph_0\rvert^2 \rangle\, \d\mu_{\mathrm{aut}}(\pi),
\end{split}
\end{equation}
where the sums are over arithmetically normalised orthogonal basis of $\mathcal{B}(\pi)$. The calculations of the degenerate term are done in Section \ref{section: Degenerate term}, and for the regularised term see Section \ref{section: Regularised term}.
\subsection{Proof of the Theorem}
\begin{proof}[Proof of Theorem \ref{Theorem}]
We start by writing
\begin{equation}\label{eq: normalised inner product identity}
    N(\q)\frac{\zeta_\q^3(1)}{\zeta_\q(2)}\langle \vph_0 \mathrm{Eis}(f_{s})(.\, x), \vph_0 \mathrm{Eis}(f_{s})(.\, x) \rangle=N(\q)\frac{\zeta_\q^3(1)}{\zeta_\q(2)}\langle \lvert \mathrm{Eis}(f_{1/2})(.\, x)\rvert^2, \lvert \vph_0\rvert^2\rangle,
\end{equation}
where, by the calculations in Section \ref{section: Calculations on the spectral side}, the left-hand side evaluates to 
\[N(\q)\frac{\zeta_\q^3(1)}{\zeta_\q(2)}\langle \vph_0 \mathrm{Eis}(f_{s})(.\, x), \vph_0 \mathrm{Eis}(f_{s})(.\, x) \rangle=\int_{\hat{\mathbb{X}}_{\mathrm{gen}}}\frac{\lvert \Lambda(s,\tilde{\pi}\otimes \pi_0)\rvert^2}{\ell(\pi)}J_\q(\pi)\, \mathrm{d}\mu_{\text{aut}}(\pi).\]
The first property of $J_\q(\pi)$ in Theorem \ref{Theorem} is satisfied by Proposition \ref{Prop: Local Specwght}, the second property follows from Proposition \ref{Prop: Lower bound on specwght}, and finally, we obtain the third property in the Section $\ref{section: Spectral Weight}$. For the right-hand side of \eqref{eq: normalised inner product identity}, we use the regularised spectral decomposition; see \ref{eq: period side reg spec decomp} 
\begin{align*}
    \langle \overline{\mathrm{Eis}(f_{1/2+s_1})}(.x)\mathrm{Eis}&(f_{1/2+s_2})(.x),\lvert \vph_0\rvert^2 \rangle =\langle \mathcal{E}_{s_1,s_2}(.\,x),\lvert \vph_0\rvert^2 \rangle\\
    & +\int_{\hat{\mathbb{X}}}\sum_{\vph\in \mathcal{B}(\pi)}\frac{1}{\lVert \vph\rVert_{\pi}^2}\langle \overline{\mathrm{Eis}(f_{1/2+s_1})}(.x)\mathrm{Eis}(f_{1/2+s_2})(.x),\vph \rangle_{\mathrm{reg}} \langle \vph,\lvert \vph_0\rvert^2 \rangle\, \mathrm{d}\mu_{\mathrm{aut}}(\pi).
 \end{align*}
In Section \ref{section: Regularised term} we discuss that the non-generic spectrum does not contribute. Moreover, Proposition \ref{Prop: Degenerate term} gives the asymptotic behavior of the main term
\begin{align*}
    N(\q)\frac{\zeta_\q^3(1)}{\zeta_\q(2)}\lim_{s_1,s_2\to 0} \langle \mathcal{E}_{s_1,s_2}(.\,x),\lvert \vph_0\rvert^2 \rangle&= \mathrm{vol}^{\,-1}(K_\q)\Bigg(\frac{\xi^*(1)^3 N(\mathfrak{d})\Lambda(1,\pi_0,\text{Ad})}{3\xi(2)}  \log^3 N(\q)+\\
    &+A_{1,\q} \log^2N(\q)
  +A_{2,\q} \log N(\q)+A_{3,\q}\Bigg).
\end{align*}
Finally, by Proposition \ref{Prop: Regularised term}, the integral over the generic spectrum is
\[\lim_{s_1,s_2\to 0}\int_{\hat{\mathbb{X}}_{\mathrm{gen}}}\sum_{\vph\in \mathcal{B}(\pi)} \frac{1}{\lVert \vph \rVert_{\pi}^2}\langle \overline{\mathrm{Eis}(f_{1/2+s_1})}(.\,x)\mathrm{Eis}(f_{1/2+s_2})(.x),\vph \rangle_{\mathrm{reg}} \langle \vph,\lvert \vph_0\rvert^2 \rangle\, \mathrm{d}\mu_{\mathrm{aut}}(\pi)\ll_{\varepsilon,\pi_0} N(\q)^{-1/2+\theta+\varepsilon}.\]
Combining the estimates, we obtain the final result.
\end{proof}

\section{Degenerate term}\label{section: Degenerate term}
In this section we consider the degenerate term of the period side in equation \eqref{eq: period side reg spec decomp} and show the following result.
\begin{proposition}\label{Prop: Degenerate term}
We have
\begin{align*}
  N(\q) \frac{\zeta_{\q}^3(1)}{\zeta_{\q}(2)}\lim_{s_1,s_2\to 0} \langle \mathcal{E}_{s_1,s_2}(.\,x),\lvert \vph_0\rvert^2 \rangle=& \mathrm{vol}^{\,-1}(K_\q)\Big( \xi^*(1)^3 N(\mathfrak{d})\frac{\Lambda(1,\pi_0,\text{Ad})}{\xi(2)}  \log^3 N(\q)+A_{1,\q} \log^2N(\q)+\\
  +&A_{2,\q} \log N(\q)+A_{3,\q}\Big),  
\end{align*}
    where each $A_{j,\q}$ is an explicit $\q$-dependent term satisfying $A_{j,\q}\ll_{F,\pi_0} \min \{ \omega_F(\q)^{j+2},(\log \log \mathfrak{r})^{j+2}\}$.
\end{proposition}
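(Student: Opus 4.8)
The plan is to compute the degenerate term $\langle \mathcal{E}_{s_1,s_2}(.\,x),\lvert\vph_0\rvert^2\rangle$ explicitly by unfolding, factoring into local integrals, taking the limit $s_1,s_2\to 0$, and extracting the polynomial-in-$\log N(\q)$ behaviour.

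\textbf{Step 1: Unfolding the degenerate period.} Since $\mathcal{E}_{s_1,s_2}$ is an Eisenstein series built from the four constant terms in \eqref{eq: Eis series out of constant terms}, and $\lvert\vph_0\rvert^2$ is a cuspidal-times-its-conjugate object, I would unfold against $\lvert\vph_0\rvert^2$ using the Rankin--Selberg method of Section \ref{section: RankinSelberg}. Each of the four pieces $\overline{f_{1/2+s_1}^{(\bullet)}}\,f_{1/2+s_2}^{(\bullet)}$ is again (a scalar multiple of) a principal series vector, so the period $\langle\mathrm{Eis}(\cdot)(.\,x),\lvert\vph_0\rvert^2\rangle$ becomes a zeta integral $\Psi(\,\cdot\,(.\,x),\vph_0,\overline{\vph_0})$ which, by the factorisation in Section \ref{section: RankinSelberg}, splits as $\prod_v \Psi_v$. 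Away from $\q\mathfrak{d}$ the local integrals produce completed Rankin--Selberg factors $\Lambda_v(\ast,\pi_0\otimes\tilde\pi_0)$; at archimedean and different-dividing places they produce harmless explicit gamma/constant factors; at $v\mid\q$ the translate by $x_v=\mathrm{diag}(\varpi_v^{-r_v},1)$ is what produces the powers of $N(\q)$.

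\textbf{Step 2: The local computation at $v\mid\q$ and assembling the four terms.} This is where I invoke Proposition \ref{Prop: Local Zeta integrals}: it evaluates exactly the four local zeta integrals $\Psi_v(\overline{f_{1/2+s_1,v}^{(\bullet)}}(.\,x_v)f_{1/2+s_2,v}^{(\bullet)}(.\,x_v),W_{0,v},\overline{W_{0,v}})$, each carrying a factor $N(\p_v)^{\pm\overline{s_1}\pm s_2}$ times something mildly dependent on $N(\p_v)$ and holomorphic near $s_1=s_2=0$. Multiplying over $v\mid\q$ and combining with the global $\Lambda$-factors and the functional-equation relation between $f$ and $\tilde f$, the total becomes the advertised sum
\[
N(\q)^{\overline{s_1}+s_2}F(\overline{s_1},s_2)+N(\q)^{\overline{s_1}-s_2}F(\overline{s_1},-s_2)+N(\q)^{-\overline{s_1}+s_2}F(-\overline{s_1},s_2)+N(\q)^{-\overline{s_1}-s_2}F(-\overline{s_1},-s_2),
\]
where $F$ contains the global $\xi$-factors with their pole at $1$ together with an explicit finite Euler product over $\q$, giving $F$ a triple pole at $s_1=s_2=0$ coming from $\xi(1+\overline{s_1}+s_2)\xi(1+\overline{s_1}-s_2)\xi(1-\overline{s_1}+s_2)\xi(1-\overline{s_1}-s_2)$-type factors (roughly), multiplied by $\Lambda(1,\pi_0,\mathrm{Ad})$-flavoured pieces.

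\textbf{Step 3: Taking the limit $s_1,s_2\to 0$ and extracting the polynomial.} The individual summands blow up, but the signs $N(\q)^{\pm\overline{s_1}\pm s_2}$ combined with the pole structure of $F$ make the limit finite: writing $N(\q)^{\pm\overline{s_1}\pm s_2}=\exp((\pm\overline{s_1}\pm s_2)\log N(\q))$ and Taylor-expanding both the exponentials and the Laurent expansion of $F$ around $s_1=s_2=0$, the sum is a residue-type computation in two variables. The $\log N(\q)$ of degree $3$ arises as (triple pole of $F$) against (cubic term in the exponential expansion), with leading coefficient $\tfrac{1}{3}\xi^*(1)^3 N(\mathfrak{d})\Lambda(1,\pi_0,\mathrm{Ad})/\xi(2)$ after normalising by $N(\q)\zeta_\q^3(1)/\zeta_\q(2)=\mathrm{vol}^{-1}(K_\q)$; the lower-order coefficients $A_{j,\q}$ collect the subleading Laurent coefficients of $F$, which involve logarithmic derivatives of the finite Euler product over $v\mid\q$ of $\zeta_v$ and of $L_v(1,\pi_0,\mathrm{Ad})$. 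Each such logarithmic derivative is a sum over the $\omega_F(\q)$ primes dividing $\q$ of a bounded quantity, hence $A_{j,\q}\ll\omega_F(\q)^{j+2}$; the competing bound $(\log\log\mathfrak{r})^{j+2}$ comes from the standard estimate $\omega_F(\q)\ll\log\mathfrak{r}/\log\log\mathfrak{r}$ together with the fact that the relevant Euler products converge, giving also an absolute bound on these coefficients in terms of $\log\log$ of the norm. I would carry this out by organising the two-variable Laurent/Taylor expansion cleanly, perhaps by first reducing to a one-variable contour-integral (Mellin--Barnes) representation $\oint\oint N(\q)^{u+v}F(u,v)\,\tfrac{du}{u}\tfrac{dv}{v}$-style and moving contours.

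\textbf{Main obstacle.} The genuinely delicate point is Step 3: controlling the cancellation among the four terms uniformly in $\q$ and tracking the $\q$-dependence of every subleading coefficient. The triple pole means naive estimates lose powers of $\log N(\q)$, so one must expand to the correct order and verify that the apparent singularities cancel \emph{identically} (not just in the limit), which forces careful bookkeeping of the exact shape of $F$ produced by Proposition \ref{Prop: Local Zeta integrals}. Getting the precise constant $\tfrac{1}{3}$ and the exact arithmetic factor $N(\mathfrak{d})\Lambda(1,\pi_0,\mathrm{Ad})/\xi(2)$ requires keeping all normalisations (Haar measures, the $\zeta_v(2)/L_v(1,\pi\otimes\tilde\pi)$ factors in \eqref{eq: Whittaker inner product}, the volume of $K_\q$) consistent throughout; this is where most of the real work lies.
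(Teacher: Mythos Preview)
Your plan is essentially the paper's: unfold via Rankin--Selberg, feed in Proposition~\ref{Prop: Local Zeta integrals}, and Laurent-expand around the triple pole to read off the $\log^3 N(\q)$ term and the lower-order coefficients. One genuine gap: your derivation of the $(\log\log\mathfrak r)^{j+2}$ bound does not work. Plugging $\omega_F(\q)\ll \log\mathfrak r/\log\log\mathfrak r$ into $\omega_F(\q)^{j+2}$ only yields $(\log\mathfrak r/\log\log\mathfrak r)^{j+2}$, which is far weaker than claimed. The paper obtains the $(\log\log)$ bound by a different mechanism (Lemma~\ref{Lem: Taylor coefficients of zetas}): the Taylor coefficients of the local correction factors $h_j$ at $v\mid\q$ are controlled by sums of the shape $\sum_{\p\mid\q}\log^m N(\p)/N(\p)$, and these are bounded directly by $(\log\log N(\mathfrak r))^m$ via a Mertens-type splitting into primes of small and large norm.

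Two smaller bookkeeping points. First, $N(\q)\zeta_\q^3(1)/\zeta_\q(2)$ is not $\mathrm{vol}^{-1}(K_\q)$ (that is $N(\q)\zeta_\q(1)/\zeta_\q(2)$); the extra $\zeta_\q^2(1)$ cancels against $h_j(0,0)=\zeta_\q^{-2}(1)$ in the limit computation, and the residual $\zeta_\q(1)$ factors are absorbed into the $A_{j,\q}$ bounds. Second, the four terms are not literally values of a single function $F$ at $(\pm\overline{s_1},\pm s_2)$: the local corrections $h_1,\dots,h_4$ differ from one another, and the cancellation of the poles at $z=0$, $w=0$, $z\pm w=0$ relies on specific compatibility identities among the $h_j$ along those loci, which the paper isolates as Lemma~\ref{Lem: Residue cancellation}. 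Your ``Main obstacle'' paragraph correctly anticipates that this is where the work lies, but the mechanism is these $h_j$ compatibilities rather than pure sign symmetry of the $N(\q)^{\pm\overline{s_1}\pm s_2}$ factors.
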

For the remainder of this section we will take $s_1,s_2$ such that $\overline{s_1},s_2,\overline{s_1} \pm s_2\neq 0$. To prove the proposition above, we start with the standard Rankin--Selberg computation, transferring from the global to the local problem. We have
\begin{equation}\label{eq: decomposing degenerate term}
    \begin{split}
        \langle \mathcal{E}_{s_1,s_2}(.\,x), \lvert\vph_0\rvert^2 \rangle &=\prod_v \Psi_v(\overline{f_{1/2+s_1,v}}(.x_v)f_{1/2+s_2,v}(.x_v),W_{0,v},\overline{W_{0,v}})\\
    &+\prod_v \Psi_v(\overline{\Tilde{f}_{1/2+s_1,v}}(.x_v)f_{1/2+s_2,v}(.x_v),W_{0,v},\overline{W_{0,v}})\\
    &+\prod_v \Psi_v(\overline{f_{1/2+s_1,v}}(.x_v)\Tilde{f}_{1/2+s_2,v}(.x_v),W_{0,v},\overline{W_{0,v}})\\
    &+\prod_v \Psi_v(\overline{\tilde{f}_{1/2+s_1,v}}(.x_v)\tilde{f}_{1/2+s_2,v}(.x_v),W_{0,v},\overline{W_{0,v}})
    \end{split}
\end{equation}
where $\Psi_v$ is the local Zeta integral as defined in equation \eqref{eq: def of local zeta integral}.
We introduce the functions
\begin{equation}\label{eq: def of G function}
    G(z,w)\defeq \frac{N(\q)^{z+w}N(\mathfrak{d})^{1+2z+2w}}{\xi(2+2z+2w)}\xi(1+2z)\xi(1+2w)\Lambda(1+z+w,\pi_0\otimes \tilde{\pi}_0),
\end{equation}
\[h_1(z,w)\defeq \prod_{v\mid \q} \zeta_v^{-1}(1+2z)\zeta_v^{-1}(1+2w),\]
\[h_2(z,w)\defeq \prod_{v\mid \q} \zeta_v^{-1}(1)\zeta_v^{-1}(1+2w),\]
\[h_3(z,w)\defeq \prod_{v\mid \q} \zeta_v^{-1}(1+2z)\zeta_v^{-1}(1),\]
\[h_4(z,w)\defeq \prod_{v\mid \q} \zeta_v^{-1}(1-2z)\zeta_v^{-1}(1-2w)\frac{\zeta_v(1-2z-2w)}{\zeta_v(1)},\]
which are meromorphic in $z,w$ and will correspond to the four summands in equation \eqref{eq: decomposing degenerate term}. Note that $h_2$, $h_3$ are, in fact, one-variable functions, but we keep the notation as above, for convenience. We need a result on the Taylor coefficients of $h_j$.
\begin{lemma}\label{Lem: Taylor coefficients of zetas}
    The coefficients of the Taylor expansion
    \[h_j(z,w)=\sum_{m,n\geq 0} a_{m,n}^{(j)} z^m w^n\]
    satisfy
    \[a_{m,n}^{(j)}\ll_{m,n} \min \{ \omega_F(\q)^{m+n},(\log \log N(\mathfrak{r}))^{m+n}\}.\]
\end{lemma}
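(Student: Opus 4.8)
The plan is to reduce everything to a single elementary estimate on Taylor coefficients of $\log \zeta_v$-type series summed over the primes dividing $\q$. First I would observe that each $h_j$ is a finite product, over $v \mid \q$, of factors of the shape $\zeta_v(1+2z)^{\pm 1}$, $\zeta_v(1+2w)^{\pm 1}$, $\zeta_v(1)^{-1}$, $\zeta_v(1-2z-2w)$, etc. Since $\zeta_v(s) = (1-N(\p_v)^{-s})^{-1}$, each such factor is $1 + O(N(\p_v)^{-1})$ uniformly for $z,w$ in a fixed small neighborhood of $0$, so it is convenient to pass to logarithms: write $h_j(z,w) = \exp\big( \sum_{v\mid\q} g_{j,v}(z,w)\big)$ where $g_{j,v}(z,w) = \log h_{j,v}(z,w)$ is holomorphic on a fixed polydisc $|z|,|w| \le \delta$ and satisfies $g_{j,v}(z,w) \ll N(\p_v)^{-1}$ there, with all its Taylor coefficients $\ll_{m,n} N(\p_v)^{-1}$ by Cauchy's estimates on that polydisc (uniformly in $v$, since $N(\p_v) \ge 2$ makes the factor bounded away from $0$ and $\infty$).

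Next I would bound the Taylor coefficients of the exponent $G_j(z,w) := \sum_{v\mid\q} g_{j,v}(z,w)$: its $(m,n)$-coefficient is $\sum_{v\mid\q} [z^m w^n] g_{j,v} \ll_{m,n} \sum_{v\mid\q} N(\p_v)^{-1}$. Here I use the two standard bounds for this sum over prime ideals dividing $\q$: trivially it is $\le \omega_F(\q)$ (each term is $\le 1/2 \le 1$, or just $\le \omega_F(\q)$ terms each bounded), while on the other hand $\sum_{\p \mid \q} N(\p)^{-1} \ll \log\log N(\q)$ by the standard Mertens-type estimate (the sum is maximized by taking $\q$ to be the product of the smallest prime ideals, and $\sum_{N(\p) \le X} N(\p)^{-1} = \log\log X + O(1)$; if $\q$ has $\omega_F(\q) = k$ prime factors then $N(\q)$ is at least the product of the $k$ smallest prime norms, forcing $k \ll \log N(\q)/\log\log N(\q)$ and hence $\sum_{\p\mid\q} N(\p)^{-1} \ll \log\log N(\q)$). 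Combining, each Taylor coefficient of $G_j$ is $\ll_{m,n} \min\{\omega_F(\q), \log\log N(\q)\} =: L$. Here I should note that $\mathfrak{r}$ in the statement presumably denotes the radical of $\q$ (so $\omega_F(\mathfrak r) = \omega_F(\q)$ and $\log\log N(\mathfrak r) \le \log\log N(\q)$), which only helps; I would keep track of this notational point.

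Finally I would recover the coefficients of $h_j = \exp(G_j)$ from those of $G_j$ by the exponential formula: $[z^m w^n]\exp(G_j)$ is a finite sum, with nonnegative combinatorial coefficients depending only on $m,n$, of products of at most $m+n$ Taylor coefficients of $G_j$ of total degree $(m,n)$. Since each factor is $\ll_{m,n} L$ and there are at most $m+n$ of them in each monomial (and $O_{m,n}(1)$ monomials), we get $[z^m w^n] h_j \ll_{m,n} L^{m+n} = \min\{\omega_F(\q), \log\log N(\q)\}^{m+n} \le \min\{\omega_F(\q)^{m+n}, (\log\log N(\mathfrak r))^{m+n}\}$, which is the claim (using $\log\log N(\mathfrak r) \le \log\log N(\q)$, and noting that when $G_j$ itself contributes a factor already of size $\le L$ one never does worse than $L^{m+n}$ since $L \ge 1$ once $\omega_F(\q)\ge 1$; the case $\q = \oo$ is trivial). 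The only mild subtlety — and the step I would be most careful about — is making the constants in the Cauchy estimates for $g_{j,v}$ genuinely uniform in $v$; this requires fixing $\delta$ small enough (depending only on $F$, e.g. so that $|1 - N(\p_v)^{-1-2z}| \ge 1/2$ for all $v$ and all $|z|\le\delta$, which holds since $N(\p_v)\ge 2$) before extracting coefficients, after which everything is a routine application of the exponential formula and Mertens' theorem.
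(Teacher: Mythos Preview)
Your overall strategy---pass to logarithms, bound the Taylor coefficients of $G_j=\log h_j$, then recover those of $h_j$ via the exponential formula---is sound and close in spirit to the paper's direct product expansion. However, there is a genuine gap in the Cauchy-estimate step that spoils the $\log\log$ half of the bound. On the polydisc $|z|,|w|\le\delta$ one has only $g_{j,v}(z,w)\ll N(\p_v)^{-1+2\delta}$, not $N(\p_v)^{-1}$: for instance $|N(\p_v)^{-1\pm 2z}|=N(\p_v)^{-1}e^{\mp 2\Re(z)\log N(\p_v)}$ attains $N(\p_v)^{-1+2\delta}$ at $\Re(z)=\mp\delta$. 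Cauchy therefore gives $[z^m w^n]\,g_{j,v}\ll_{m,n,\delta} N(\p_v)^{-1+2\delta}$, and the sum $\sum_{v\mid\q} N(\p_v)^{-1+2\delta}$ is \emph{not} $\ll \log\log N(\mathfrak r)$ for any fixed $\delta>0$: when $\q$ is the product of all prime ideals of norm $\le X$ (so $X\asymp\log N(\mathfrak r)$), partial summation yields $\sum_{N(\p)\le X} N(\p)^{-1+2\delta}\asymp_\delta X^{2\delta}/\log X$, a genuine power of $\log N(\mathfrak r)$. Equivalently, the actual $(m,n)$-coefficient of $g_{j,v}$ is $\asymp_{m,n} (\log N(\p_v))^{m+n}/N(\p_v)$, which is not $\ll_{m,n} N(\p_v)^{-1}$ uniformly in $v$. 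Your argument as written thus only delivers the $\omega_F(\q)^{m+n}$ bound.

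The fix is to compute the Taylor coefficients of $g_{j,v}$ (or, as the paper does, of the local factor $h_{j,v}$ itself) explicitly rather than via Cauchy: differentiating $1-N(\p_v)^{-1\pm 2z}$ or $\zeta_v(1-2z-2w)/\zeta_v(1)$ at the origin gives $[z^m w^n]\,g_{j,v}\ll_{m,n} (\log N(\p_v))^{m+n}/N(\p_v)$ for $(m,n)\ne(0,0)$. One then needs the sharper Mertens-type input $\sum_{v\mid\q} (\log N(\p_v))^{k}/N(\p_v)\ll_k (\log\log N(\mathfrak r))^{k}$, which the paper obtains by reducing to primes of norm $\le\log N(\mathfrak r)$ and partial summation with the Landau prime ideal theorem. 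With this in hand your exponential-formula step does go through: writing $h_j=h_j(0,0)\,e^{\tilde G_j}$ with $h_j(0,0)\in(0,1]$ and $\tilde G_j$ having no constant term, the coefficient $[z^m w^n]\,e^{\tilde G_j}$ is a finite sum of products $\prod_i [z^{m_i}w^{n_i}]\,\tilde G_j$ with $\sum_i(m_i+n_i)=m+n$, each factor $\ll(\log\log N(\mathfrak r))^{m_i+n_i}$, giving $(\log\log N(\mathfrak r))^{m+n}$. Once corrected this way, your argument and the paper's are essentially equivalent.
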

\begin{proof}
    It is easy to see that
\[\zeta_v^{-1}(1+2z)=\sum_{n\ge 0} c_v(n) z^n, \quad\text{ with } |c_v(0)|< 1,\text{ and }|c_v(n)| \ll_n \frac{\log^n N(\p_v)}{N(\p_v)} \text{ for  }n\ge 1,\]
and similarly
\[\frac{\zeta_v(1-2z-2w)}{\zeta_v(1)}=\sum_{n\ge 0} d_v(n) (z+w)^n, \quad\text{ with } d_v(0)= 1,\text{ and }|d_v(n)| \ll_n \frac{\log^n N(\p_v)}{N(\p_v)} \text{ for  }n\ge 1.\]
Thus the coefficient of $z^n$ of $\prod_{v\mid \q} \zeta_v^{-1}(1+2z)$ equals
\begin{equation*}
\sum_{\substack{n_v\in\Z_{\ge 0}\, :\\ \sum_{v\mid \q} n_v =n}} \prod_{v\mid \q}c_v(n_v) \ll_n \sum_{\substack{n_v\in\Z_{\ge 0}\, :\\  \sum_{v\mid \q} n_v =n}}\;\; \prod_{\substack{v\mid \q\,:\\ n_v>0}}\frac{\log^{n_v} N(\p_v)}{N(\p_v)}\ll_n \sum_{\substack{n_1,\dots n_a\in \Z_{\ge 0}\\ \sum_j n_j=n}} \prod_{j=1}^a \Bigg( \sum_{v\mid \q} \frac{\log^{n_j} N(\p_v)}{N(\p_v)}\Bigg)
\end{equation*}
and the coefficient of $(z+w)^n$ of $\prod_{v\mid \q} \tfrac{\zeta_v(1-2z-2w)}{\zeta_v(1)}$ equals
\begin{equation*}
\sum_{\substack{n_v\in\Z_{\ge 0}\, :\\ \sum_{v\mid \q} n_v =n}} \prod_{v\mid \q}d_v(n_v) \ll_n \sum_{\substack{n_v\in\Z_{\ge 0}\, :\\  \sum_{v\mid \q} n_v =n}}\;\;  \prod_{\substack{v\mid \q\,:\\ n_v>0}}\frac{\log^{n_v} N(\p_v)}{N(\p_v)}\ll_n \sum_{\substack{n_1,\dots n_a\in \Z_{\ge 0}\\ \sum_j n_j=n}} \prod_{j=1}^a \Bigg( \sum_{v\mid \q} \frac{\log^{n_j} N(\p_v)}{N(\p_v)}\Bigg).
\end{equation*}
In the case when $\q$ has few distinct prime factors, we can bound
\[\sum_{v\mid \q} \frac{\log^m N(\p_v)}{N(\p_v)}\ll_m \sum_{v\mid \q} 1=\omega_F(\q). \]
On the other hand, we can write
\[\sum_{v\mid \q} \frac{\log^m N(\p_v)}{N(\p_v)}\ll_m \sum_{N(\p_v)\leq \log (N(\mathfrak{r}))} \frac{\log^m N(\p_v)}{N(\p_v)}\ll_m (\log \log (N(\mathfrak{r})))^m\]
which we calculate using summation by parts and Landau prime ideal theorem. Therefore, we get the bound
\[\sum_{\substack{n_v\in\Z_{\ge 0}\, :\\ \sum_{v\mid \q} n_v =n}} \prod_{v\mid \q}c_v(n_v) \ll_n  \min \{ \omega_F(\q)^{n},(\log \log N(\mathfrak{r}))^{n}\}\]
and a similarly
\[\sum_{\substack{n_v\in\Z_{\ge 0}\, :\\ \sum_{v\mid \q} n_v =n}} \prod_{v\mid \q}d_v(n_v) \ll_n  \min \{ \omega_F(\q)^{n},(\log \log N(\mathfrak{r}))^{n}\}.\]
If we now consider Taylor coefficients of $h_j(z,w)$, then by multiplying bounds on the Taylor coefficients of $\prod_{v\mid \q} \zeta_v^{-1}(1+2z)$ and $\prod_{v\mid \q} \zeta_v^{-1}(1+2z)$ we obtain the statement of the Lemma.
\end{proof}
\begin{remark}\label{remark: coeffs of hi for p^n}
    If $\q=\p$ is a prime ideal we get a stronger result that
    \[a_{m,n}^{(j)}(z,w) \begin{cases}
        =&1\quad m=n=0\\
        \ll_{\varepsilon,m,n} &N(\p)^{-(1-\varepsilon)} \quad \mathrm{otherwise}.
    \end{cases}\]
    Similarly, if $\q=\p^k$, where $\p$ is a fixed prime ideal we have
    \[a_{m,n}^{(j)}(z,w)\ll_{\varepsilon,m,n,\p} 1.\]
\end{remark}
\subsection{Local calculations} 
In this section, we prove the following result.
\begin{proposition}\label{Prop: Local Zeta integrals}
    For each place $v$ we have\vspace{4pt}
    \begin{enumerate}
        \item[(i)] $\Psi_v(\overline{f_{1/2+s_1,v}}(.x_v)f_{1/2+s_2,v}(.x_v),W_{0,v},\overline{W_{0,v}})=G_v(\overline{s_1},s_2)h_{1,v}(\overline{s_1},s_2),$\vspace{4pt}
        \item[(ii)] $\Psi_v(\overline{\Tilde{f}_{1/2+s_1,v}}(.x_v)f_{1/2+s_2,v}(.x_v),W_{0,v},\overline{W_{0,v}})= G_v(-\overline{s_1},s_2)h_{2,v}(\overline{s_1},s_2),$\vspace{4pt}
        \item[(iii)] $\Psi_v(\overline{f_{1/2+s_1,v}}(.x_v)\Tilde{f}_{1/2+s_2,v}(.x_v),W_{0,v},\overline{W_{0,v}})= G_v(\overline{s_1},-s_2)h_{3,v}(\overline{s_1},s_2),$\vspace{4pt}
        \item[(iv)] $\Psi_v(\overline{\tilde{f}_{1/2+s_1,v}}(.x_v)\tilde{f}_{1/2+s_2,v}(.x_v),W_{0,v},\overline{W_{0,v}})= G_v(-\overline{s_1},-s_2)h_{4,v}(\overline{s_1},s_2)-$\vspace{4pt}\\
        $\mathbf{1}_{v\mid \q}\cdot G_v(-\overline{s_1},-s_2)h_{4,v}(\overline{s_1},s_2)8\overline{s_1}s_2(\overline{s_1}+s_2)\frac{\zeta_v^3(1) \log^3 N(\p_v)}{N(\p_v)^{r_v+1}}(1+\varepsilon_{\overline{s_1},s_2,v}),$\vspace{4pt}
    \end{enumerate}
where $\lim_{s_1,s_2\to 0} \varepsilon_{\overline{s_1},s_2,v}=0$.
\end{proposition}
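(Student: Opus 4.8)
The plan is a direct, place-by-place evaluation of the four products of local zeta integrals in \eqref{eq: decomposing degenerate term}. Fix a place $v$; each integrand is of the form $g\mapsto\overline{A_{s_1,v}(gx_v)}\,B_{s_2,v}(gx_v)$ with $A,B\in\{f,\tilde f\}$, and — using $\tilde f_{\Phi_v,s}=f_{\hat\Phi_v,1-s}(w(\cdot^{t})^{-1})$ from \eqref{eq: def second constant term of Eis} — all four factors are flat sections: $f_{\Phi_v,1/2+s}$ transforms under $\mathrm{diag}(y,1)$ by $\lvert y\rvert^{1/2+s}$ and $\tilde f_{\Phi_v,1/2+s}$ by $\lvert y\rvert^{1/2-s}$. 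Thus the product transforms by $\lvert y\rvert^{1+z+w}$ with $(z,w)=(\pm\overline{s_1},\pm s_2)$ according to the case, and inserting the Iwasawa decomposition of Section \ref{section: Matrix gps} into \eqref{eq: def of local zeta integral}, together with the right-$\mathrm{K}(F_v)$-invariance of $W_{0,v}$, factors the local zeta integral as
\[
\Big(\int_{F_v^{\times}}\lvert W_{0,v}(\mathrm{diag}(y,1))\rvert^{2}\,\lvert y\rvert^{\,z+w}\,\d^{\times}y\Big)\cdot\Big(\int_{\mathrm{K}(F_v)}\overline{A_{s_1,v}(kx_v)}\,B_{s_2,v}(kx_v)\,\d k\Big).
\]
By the Macdonald--Miyauchi formula \eqref{eq: Miyauchi} for $W_{0,v}$ together with the Cauchy identity for Schur polynomials in the Satake parameters of $\pi_{0,v}$ (using that $\pi_0$ has trivial central character, so $\alpha_{1,v}\alpha_{2,v}=1$), the first factor equals $L_v(1+z+w,\pi_0\otimes\tilde\pi_0)/\zeta_v(2+2z+2w)$, which is precisely the $\Lambda(1+z+w,\pi_0\otimes\tilde\pi_0)/\xi(2+2z+2w)$ part of $G_v$ from \eqref{eq: def of G function}. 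Everything is thereby reduced to the $\mathrm{K}(F_v)$-integral of the two twisted sections.

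For archimedean $v$ and non-archimedean $v\nmid\q\mathfrak{d}$ one has $x_v=1$ and $\Phi_v$ the standard Gaussian (resp. $\mathbf 1_{\oo_v^{2}}$), so the $\mathrm{K}$-integral is the classical archimedean (resp. unramified) Rankin--Selberg local computation; it supplies $G_v(\pm\overline{s_1},\pm s_2)$ and $h_{j,v}\equiv1$ there, as in Section \ref{section: RankinSelberg}. For $v\mid\mathfrak{d}$ the twist $x_v=\mathrm{diag}(\lambda_v^{-1},1)$ together with the $\lambda_v$-shifted data $\Phi_v$, $W_{0,v}^{\lambda}$ is arranged precisely so that, after unfolding, these places behave as unramified ones; the only residue is the $N(\mathfrak{d})^{1+2z+2w}$ factor of $G$ (coming from $\lvert\det x_v\rvert$ and the local-different normalization of $\zeta_v$), and again $h_{j,v}\equiv1$.

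The substantive case is $v\mid\q$, where $x_v=\mathrm{diag}(\varpi_v^{-r_v},1)$, $\Phi_v=\mathbf 1_{\oo_v}\otimes\mathbf 1_{\oo_v^{\times}}$ and $\hat\Phi_v=\mathbf 1_{\oo_v}\otimes\widehat{\mathbf 1_{\oo_v^{\times}}}=\mathbf 1_{\oo_v}\otimes(\mathbf 1_{\oo_v}-N(\p_v)^{-1}\mathbf 1_{\p_v^{-1}})$. Evaluating \eqref{eq: def of princ series vec} on $kx_v$, $k\in\mathrm{K}(F_v)$, and using that the bottom row of $k$ is primitive, gives the clean identity $f_{\Phi_v,1/2+s}(kx_v)=N(\p_v)^{r_v(1/2+s)}\,\mathbf 1_{\mathrm{K}_{\p_v^{r_v}}}(k)$. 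The analogous evaluation of $\tilde f_{\Phi_v,1/2+s}(kx_v)=f_{\hat\Phi_v,1/2-s}\!\big(w(k^{-1})^{t}x_v^{-1}\big)$ — now carrying both summands of $\hat\Phi_v$ through the geometric sum — expresses it as an explicit finite Laurent polynomial in $N(\p_v)^{s}$ times a linear combination of indicators of congruence-type cosets of $\mathrm{K}(F_v)$, governed by the valuations of the bottom row of $w(k^{-1})^{t}$ and hence by the nested subgroups $\mathrm{K}_{\p_v^{j}}$, $0\le j\le r_v$. Since these cosets are disjoint, the $\mathrm{K}$-integral of the product collapses to a short explicit sum, evaluated using $\mathrm{vol}(\mathrm{K}_{\p_v^{j}})=N(\p_v)^{-j}\zeta_v(2)/\zeta_v(1)$. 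In cases (i)--(iii) at least one of the two factors is the clean indicator $\mathbf 1_{\mathrm{K}_{\p_v^{r_v}}}$, so only the top coset survives and the sum collapses exactly to $G_v(\pm\overline{s_1},\pm s_2)\,h_{j,v}(\overline{s_1},s_2)$ — the factors $\zeta_v^{-1}(1\pm2\overline{s_1})$, $\zeta_v^{-1}(1\pm2s_2)$ and $\zeta_v^{-1}(1)$ making up $h_{j,v}$ being the arithmetic contributions of $\mathrm{vol}(\mathrm{K}_{\p_v^{r_v}})$ and of the $-N(\p_v)^{-1}$-weighted tail of $\hat\Phi_v$.

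Case (iv) is the delicate one, since \emph{both} factors are ``$\tilde f$'' sections, whose support genuinely spreads over all cosets $\mathrm{K}_{\p_v^{j}}\setminus\mathrm{K}_{\p_v^{j+1}}$, so the lower cosets $j<r_v$ also contribute. Resumming, the $j=r_v$ term gives the main quantity $G_v(-\overline{s_1},-s_2)h_{4,v}(\overline{s_1},s_2)$, and the remaining cosets must be shown to assemble into the stated correction. The points to establish are: (a) the residual sum vanishes to cubic order as $s_1,s_2\to0$, forcing the prefactor $8\,\overline{s_1}\,s_2\,(\overline{s_1}+s_2)$ — the three vanishing directions $\overline{s_1}=0$, $s_2=0$, $\overline{s_1}+s_2=0$ being exactly the locus governing the pole/zero balance of $h_4$, and the constant $8=2^{3}$ arising from the three shifts $2\overline{s_1}$, $2s_2$, $2\overline{s_1}+2s_2$ that occur in the $N(\p_v)^{\pm2s}$-expansions; (b) the residual sum has size $\zeta_v^{3}(1)\,\log^{3}N(\p_v)/N(\p_v)^{r_v+1}\cdot(1+o(1))$ — the three factors $\log N(\p_v)$ coming from differentiating $N(\p_v)^{-s}$ thrice at $s=0$ to extract the cubic vanishing, one factor $N(\p_v)^{-r_v}$ from a coset volume and the extra $N(\p_v)^{-1}$ from the tail coefficient of $\hat\Phi_v$; and (c) the error $\varepsilon_{\overline{s_1},s_2,v}\to0$, obtained by Taylor-expanding the explicit coset coefficients one order further. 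I expect the main obstacle to be precisely (b)--(c) for case (iv): the careful bookkeeping of the coset coefficients of the $\tilde f\tilde f$ product and the verification that the correction cleans up to the displayed monomial. Once the explicit formula for $\tilde f_{\Phi_v,1/2+s}(kx_v)$ is in hand, the remainder is routine geometric-series algebra.
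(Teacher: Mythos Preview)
Your Iwasawa route is a genuine alternative to the paper's argument: factoring out $|W_{0,v}|^2$ by right $\mathrm{K}$-invariance and reducing to $\int_{\mathrm{K}(F_v)}\overline{A}(kx_v)B(kx_v)\,\d k$ is clean, and your identity $f_{\Phi_v,1/2+s}(kx_v)=N(\p_v)^{r_v(1/2+s)}\mathbf{1}_{\mathrm{K}_{\p_v^{r_v}}}(k)$ is correct, so in (i)--(iii) the $\mathrm{K}$-integral collapses to the top coset and the computation goes through. The paper instead works in Bruhat coordinates \eqref{eq: Bruhat decomp} at $v\mid\q$, keeping $|W_0|^2$ coupled to the $c$-variable; for (i)--(iii) the two routes are equally direct.

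Your organization of (iv), however, is wrong. You claim the coset $j=r_v$ gives $G_v(-\overline{s_1},-s_2)h_{4,v}$ and the cosets $j<r_v$ assemble into the stated correction of size $N(\p_v)^{-(r_v+1)}$. Neither holds. On the annulus $v(c)=j$ with $0\le j\le r_v-1$ one computes $\tilde f_{1/2+s}(kx_v)=N(\p_v)^{(j-r_v/2)(1-2s)}\zeta_v(1-2s)/\zeta_v(2s)$; multiplying the two $\tilde f$ factors and the annulus volume $\asymp N(\p_v)^{-j}$ gives a contribution of order $N(\p_v)^{\,j-r_v}\cdot \overline{s_1}s_2\log^2 N(\p_v)$. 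Already $j=r_v-1$ contributes at scale $N(\p_v)^{-1}\overline{s_1}s_2$, vastly larger than the correction and vanishing only to \emph{second} order. Correspondingly the $j\ge r_v$ contribution carries the factor $\zeta_v(1-2\overline{s_1})\zeta_v(1-2s_2)$ whereas $G_vh_{4,v}$ carries $\zeta_v(1-2\overline{s_1}-2s_2)$; these coincide at $s_1=s_2=0$ but differ otherwise. The lower cosets are therefore part of the \emph{main} term: only after summing the full geometric series in $j$ and reorganizing does $G_vh_{4,v}(1-C)$ emerge.

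The paper's Bruhat argument produces the split directly. There $|W_0|^2\big(\begin{smallmatrix}y&\\c/X&1\end{smallmatrix}\big)$ is not factored out; instead one splits at $c\in\p_v^{-r_v}$ versus $c\notin\p_v^{-r_v}$. On the first range $W_0$ is spherical and the $c$-integral of $\overline{\tilde f}\tilde f$ is extended to all of $F_v$, where it collapses to $\zeta_v(1-2\overline{s_1}-2s_2)/\zeta_v(1)$ --- this is what yields $h_{4,v}$ in one stroke. The subtracted tail at $|c|>N(\p_v)^{r_v}$ and the second range (where $W_0$ must be re-decomposed via the Iwasawa identity for lower-triangular matrices) each already have size $N(\p_v)^{-(r_v+1)}$ and each carry the double zero $\zeta_v(2\overline{s_1})^{-1}\zeta_v(2s_2)^{-1}$; it is precisely their \emph{cancellation} along $\overline{s_1}+s_2=0$ that manufactures the third factor $\zeta_v(2\overline{s_1}+2s_2)^{-1}$ and hence the cubic vanishing $8\overline{s_1}s_2(\overline{s_1}+s_2)$. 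This mechanism --- the source of both the $N(\p_v)^{-(r_v+1)}$ scale and the third zero --- is invisible from your coset stratification; your route can be completed, but not with the main/correction split you propose.
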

We split the proof over several subsections depending on the place $v$. The calculations in Sections \ref{section: Unramified calculations}, and \ref{section: Discriminant calculations} are standard. For the ramified places see Section \ref{section: Ramified calculations}. Before we calculate the zeta integrals, we prove a standard lemma that will be useful for us.
\begin{lemma}\label{Lem: Whittaker integral calc}
    Let $\pi_v$ be an unramified representation of $\mathrm{G}(F_v)$ and $W\in \mathcal{W}(\pi_v,\psi_v)$ be an unramified vector in the Whittaker model of $\pi_v$ such that $W(1)=1$. Then we have an equality
    \[ \int_{F^{\times}_v} \lvert W\rvert^2  \begin{pmatrix}
            y&\\
            &1
        \end{pmatrix} \lvert y\rvert^{s}
        \,\mathrm{d}^{\times}y=\frac{L_v(1+s,\pi\otimes\tilde{\pi})}{\zeta_v(2+2s)}. \]
\end{lemma}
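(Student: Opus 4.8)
\textbf{Proof proposal for Lemma \ref{Lem: Whittaker integral calc}.}

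The plan is to recognise the integral on the left as the local Rankin--Selberg zeta integral for $\pi_v \otimes \tilde\pi_v$ paired against the spherical vector in $\mathcal{I}(s)$, and then invoke the standard unramified computation. Concretely, I would take in the definition \eqref{eq: def of local zeta integral} the representation $\pi_0$ to be $\pi$ itself and the principal series vector $f_{s,v}$ to be the spherical vector attached to the standard $\Phi_v = \mathbf{1}_{\oo_v}\otimes\mathbf{1}_{\oo_v}$; since $\psi_v$ is unramified and $W$ is the arithmetically normalised newvector, the explicit formula \eqref{eq: Miyauchi} applies. First I would restrict the Iwasawa decomposition on $\N(F_v)\bs\G(F_v)$ to the diagonal torus: because $W$, $\overline{W}$ and the spherical $f_{s,v}$ are all $\mathrm{K}(F_v)$-invariant, the $\mathrm{K}(F_v)$-integral is trivial, and $\int_{\N(F_v)\bs\G(F_v)} |W|^2(g) f_{s,v}(g)\,\d g = \int_{F_v^\times} |W|^2\!\left(\diag{y}\right) |y|^{s}\,\d^\times y$, which is exactly the left-hand side of the lemma (the value $f_{s,v}(\diag{y}) = |y|^s$ on the torus by the transformation property, with $f_{s,v}(1)=1$ from the normalisation $\mathrm{vol}(\oo_v)=1$).

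Next I would plug in \eqref{eq: Miyauchi}. Since $W$ vanishes on $\diag{\varpi_v^n}$ for $n<0$, the torus integral collapses to a sum over $n\ge 0$:
\[
\int_{F_v^\times} |W|^2\!\left(\diag{y}\right)|y|^s\,\d^\times y = \sum_{n\ge 0} N(\p_v)^{-ns}\, \left|W\!\left(\diag{\varpi_v^n}\right)\right|^2 = \sum_{n\ge 0} N(\p_v)^{-ns}\, N(\p_v)^{-n}\left|\frac{\alpha_1^{n+1}-\alpha_2^{n+1}}{\alpha_1-\alpha_2}\right|^2,
\]
using $\mathrm{vol}(\oo_v^\times)=1$ and $|\varpi_v^n| = N(\p_v)^{-n}$. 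Because $\pi_v$ is unitary the Satake parameters satisfy $|\alpha_i|^2 = \alpha_i\bar\alpha_i$ with $\bar\alpha_i = \alpha_i^{-1}$ for tempered $\pi_v$ (and in general $\overline{\alpha_1} = \alpha_2^{-1}$ up to the central character, which is trivial here), so $|W(\diag{\varpi_v^n})|^2 = N(\p_v)^{-n}\sum_{0\le i,j\le n}\alpha_1^{i}\alpha_2^{n-i}\overline{\alpha_1^{j}\alpha_2^{n-j}}$ is precisely the coefficient of $N(\p_v)^{-n}$ in the local Dirichlet series of $L_v(s,\pi_v\otimes\tilde\pi_v)$ divided by $\zeta_v(2s)$-type normalisation. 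I would then either expand the geometric series directly — writing $\sum_{n\ge 0} N(\p_v)^{-ns} |W(\diag{\varpi_v^n})|^2$ as a rational function in $N(\p_v)^{-s}$ and matching it against the four Euler factors of $L_v(1+s,\pi_v\otimes\tilde\pi_v)$ together with the single factor $\zeta_v(2+2s)^{-1}$ — or, more cleanly, cite the classical unramified Rankin--Selberg computation (e.g.\ from \cite{CogRS}, or the formula recorded just after \eqref{eq: def of local zeta integral} specialised to $\pi_0 = \pi$, $s \mapsto 1+s$ with an index shift). Either route yields $\dfrac{L_v(1+s,\pi_v\otimes\tilde\pi_v)}{\zeta_v(2+2s)}$.

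The only mild subtlety — the main obstacle, such as it is — is bookkeeping the normalisations: making sure the shift $|W(\diag{\varpi_v^n})|^2$ carries the factor $N(\p_v)^{-n}$ from \eqref{eq: Miyauchi} so that the generating series is $L_v(1+s,\cdot)$ rather than $L_v(s,\cdot)$, and confirming that the $\zeta_v(2+2s)^{-1}$ appears with the right argument (this is the usual ``extra'' local factor in the unramified Rankin--Selberg integral, coming from the volume of $\N(F_v)\bs\G(F_v)/\mathrm{K}(F_v)$ and the lattice-point count). Since everything is a convergent geometric series for $\Re(s)$ large and both sides are rational in $N(\p_v)^{-s}$, the identity then extends to all $s$ by meromorphic continuation. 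No analytic input beyond \eqref{eq: Miyauchi} and the geometric series is needed.
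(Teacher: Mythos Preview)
Your proposal is correct and follows essentially the same route as the paper: reduce the torus integral to the sum $\sum_{n\ge 0} N(\p_v)^{-n(1+s)}\bigl|\tfrac{\alpha_1^{n+1}-\alpha_2^{n+1}}{\alpha_1-\alpha_2}\bigr|^2$ via \eqref{eq: Miyauchi} and then identify this geometric series with $L_v(1+s,\pi\otimes\tilde\pi)/\zeta_v(2+2s)$. One small slip in your Rankin--Selberg framing: with $\Phi_v=\mathbf{1}_{\oo_v}\otimes\mathbf{1}_{\oo_v}$ the spherical vector satisfies $f_{s,v}(1)=\zeta_v(2s)$, not $1$ (see the paragraph after \eqref{eq: calc of princ series vec}); this is harmless here since your actual computation bypasses $f_{s,v}$ and works directly with $|y|^s$, exactly as the paper does.
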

\begin{proof}
    We write $\alpha_1,\alpha_2$ for the Satake parameters of $\pi_v$ as in Section \ref{section: Notations}. Recall the formula \eqref{eq: Miyauchi} for the normalised newvector, which we apply to our integral
    \[\int_{F^{\times}_v} \lvert W\rvert^2  \begin{pmatrix}
            y&\\
            &1
        \end{pmatrix} \lvert y\rvert^{s}
        \,\mathrm{d}^{\times}y=\sum_{n\geq 0}\lvert W\rvert^2  \begin{pmatrix}
            \varpi_v^n&\\
            &1
        \end{pmatrix}\lvert \varpi_v \rvert^{ns}=\sum_{n\geq 0}N(\p_v)^{-n(1+s)}\left\lvert \frac{\alpha_1^{n+1}-\alpha_2^{n+1}}{\alpha_1-\alpha_2} \right\rvert^{2}.\]
After standard geometric series calculations, we arrive at the desired result (see Section \ref{section: Notations}).
\end{proof}

\subsubsection{Unramified place away from $\q\mathfrak{d}$.}\label{section: Unramified calculations} 
In the case of an unramified place $v$, see Section \ref{section: Set-up}, we have $\Phi_v=\hat{\Phi}_v$, $\tilde{f}_{s,v}=f_{1-s,v}$ and $x_v=1$. So each of the four local factors is equal to
\[\Psi_v(\overline{f_{1/2\pm s_1,v}}f_{1/2\pm s_2,v},W_{0,v},\overline{W_{0,v}}).\]
Moreover $\overline{f_{1/2\pm s_1,v}}f_{1/2\pm s_2,v}$ is an unramified vector in $\mathcal{I}_v(1\pm \overline{s_1}\pm s_2)$. Hence, it is a constant multiple of $f_{1+\overline{s_1}+s_2,v}$. For $\Re(s)>0$ we have
\[f_{s,v}(1)=\int_{F_v^{\times}} \Phi_{v}(0,\,t) \lvert t\rvert^{2s} \, \mathrm{d}^{\times} t=\xi_v(2s),\]
so by comparing the values at the identity, we obtain the equality
\[\overline{f_{1/2+s_1,v}}f_{1/2+s_2,v}=\frac{\xi_v(1+2\overline{s_1})\xi_v(1+2s_2)}{\xi_v(2+2\overline{s_1}+2s_2)}f_{1+\overline{s_1}+s_2,v}.\]
Therefore, with the notation as in equation \eqref{eq: def of G function}, we get that the four terms are 
   \[\Psi_v(\overline{f_{1/2\pm s_1,v}}f_{1/2\pm s_2,v},W_{0,v},\overline{W_{0,v}})=G_v(\pm \overline{s_1},\pm s_2),\]
and since in this case $h_{j,v}=1$ we get the result of Proposition \ref{Prop: Local Zeta integrals} for these places.
\subsubsection{Place dividing the different ideal}\label{section: Discriminant calculations}
Suppose $v\mid \mathfrak{d}$, for more details see \cite{JanaNunesMom}. In this case, the vectors are just shifted unramified vectors and the local zeta integrals satisfy
\begin{align*}
    \Psi_v(\overline{f_{1/2\pm s_1,v}}f_{1/2\pm s_2,v},W_{0,v},\overline{W_{0,v}})&=N(\mathfrak{d}_v)^{1\pm 2\overline{s_1}\pm 2s_2}L_v(1+\overline{s_1}\pm  s_2, \pi_0\otimes \tilde{\pi_0}) \frac{\zeta_v(1\pm 2\overline{s_1})\zeta_v(1\pm 2s_2)}{\zeta_v(2\pm 2\overline{s_1}\pm 2s_2)}\\
    &=G_v(\pm\overline{s_1},\pm s_2).
\end{align*}

\subsubsection{Ramified place}\label{section: Ramified calculations}
Fix a finite place $v\mid \q$ and write $X=\varpi_v^{-r_v}$. We will use the Bruhat decomposition \eqref{eq: Bruhat decomp} for zeta integrals, and so we need to investigate the behaviour of the induced vectors on the lower triangular matrices. Recall the calculations of $\Phi_v$ and $\hat{\Phi}_v$ from Section \ref{section: Set-up}. 
We have
\begin{equation}\label{eq: calc of princ series vec}
\begin{split}
    f_{s,v}\begin{pmatrix}
            y&\\
            c&1
        \end{pmatrix}&=\lvert y\rvert^s f_{s,v}\begin{pmatrix}
            1&\\
            c&1
        \end{pmatrix}=\lvert y\rvert^s \int_{F_v^{\times}} \Phi(tc,\, t) \lvert t\rvert^{2s} \, \mathrm{d}^{\times} t=\lvert y\rvert^s \int_{F_v^{\times}} \mathbf{1}_{\mathfrak{o}_{v}}(tc) \mathbf{1}_{\mathfrak{o}_{v}^{\times}}(t) \lvert t\rvert^{2s} \, \mathrm{d}^{\times} t\\
        &=\lvert y\rvert^s \int_{\mathfrak{o}_v^{\times}} \mathbf{1}_{\mathfrak{o}_{v}}(tc) \, \mathrm{d}^{\times} t=\lvert y\rvert^s \mathbf{1}_{\mathfrak{o}_{v}}(c) \int_{\mathfrak{o}_v^{\times}} \, \mathrm{d}^{\times} t=\lvert y\rvert^s \mathbf{1}_{\mathfrak{o}_{v}}(c),
\end{split}
\end{equation}
and similarly 
\begin{align*}
\tilde{f}_{s,v}\begin{pmatrix}
            y&\\
            c&1
        \end{pmatrix}&=\lvert y\rvert^{1-s}\hat{f}_{1-s,v}\begin{pmatrix}
            &1\\
            1&-c
        \end{pmatrix}=\lvert y\rvert^{1-s} \int_{\oo_v \setminus \{0\}} ( \mathbf{1}_{\mathfrak{o}_{v}}(tc)-N(\p_v)^{-1} \,\mathbf{1}_{\mathfrak{o}_{v}}(\varpi_v tc))  \lvert t\rvert^{2(1-s)} \, \mathrm{d}^{\times} t.
\end{align*}
In the case when $c\in \mathfrak{o}_v$, the integrand is equal to $\zeta_v^{-1}(1)\lvert t\rvert^{2(1-s)}$, hence $\tilde{f}_{s,v}\big(\begin{smallmatrix}
            y&\\
            c&1
        \end{smallmatrix}\big) = \lvert y\rvert^{1-s}\frac{\zeta_v(2(1-s))}{\zeta_v(1)}.$ If $c\not\in \mathfrak{o}_v$ we make a substitution $t\to ct$ to arrive at
    \begin{align*}
        &\tilde{f}_{s,v}\begin{pmatrix}
            y&\\
            c&1
        \end{pmatrix}=\lvert y\rvert^{1-s}\lvert c\rvert^{-2(1-s)} \int_{c\oo_v \setminus \{0\}} ( \mathbf{1}_{\mathfrak{o}_{v}}(t)-N(\p_v)^{-1} \,\mathbf{1}_{\mathfrak{o}_{v}}(\varpi_v t))  \lvert t\rvert^{2(1-s)} \, \mathrm{d}^{\times} t\\
        =&\lvert y\rvert^{1-s}\lvert c\rvert^{-2(1-s)}\left(\int_{\oo_v \setminus \{0\}} \lvert t\rvert^{2(1-s)} \, \mathrm{d}^{\times} t-N(\p_v)^{-1}\int_{\varpi_v^{-1}\oo_v \setminus \{0\}} \lvert t\rvert^{2(1-s)} \, \mathrm{d}^{\times} t\right)=\lvert y\rvert^{1-s}\frac{\zeta_v(2(1-s))}{\zeta_v(2s-1)} \lvert c\rvert^{-2(1-s)}.
    \end{align*}
Therefore we have
\begin{equation}\label{eq: calc of opposite princ series vec }
    \begin{split}
        \tilde{f}_{s,v}\begin{pmatrix}
            y&\\
            c&1
        \end{pmatrix}=\lvert y\rvert^{1-s}\begin{cases}
            \frac{\zeta_v(2(1-s))}{\zeta_v(1)}\, \quad\,\, &\text{if}\; c\in \mathfrak{o}_v, \\
            \frac{\zeta_v(2(1-s))}{\zeta_v(2s-1)} \lvert c\rvert^{-2(1-s)}\quad &\text{otherwise}.\\
            \end{cases}
    \end{split}
\end{equation}
\begin{proof}[Proof of Proposition \ref{Prop: Local Zeta integrals} (i), (ii), (iii) ]
We do the calculations for the second part, and the other ones follow similarly. We drop the subscript $v$ for simplicity, as all of the calculations are local, and switch to Bruhat coordinates to get that
\[\Psi(\overline{f_{1/2+s_1}}(.\,x)\tilde{f}_{1/2+s_2}(.x),W_{0},\overline{W_{0}} )=\int_{F^{\times}}\int_{F} \lvert W_0\rvert^2  \begin{pmatrix}
            y&\\
            c&1
        \end{pmatrix} \overline{f_{1/2+s_1}}\begin{pmatrix}
            yX&\\
            cX&1
        \end{pmatrix}\tilde{f}_{1/2+s_2}\begin{pmatrix}
            yX&\\
            cX&1
        \end{pmatrix}
        \, \mathrm{d}c\,\frac{\mathrm{d}^{\times}y}{\lvert y\rvert}.\]
Firstly, we make a substitution $c\mapsto c/X$ to arrive at
    \begin{align*}
\Psi(\overline{f_{1/2+s_1}}(.x)\tilde{f}_{1/2+s_2}(.x),W_{0},\overline{W_{0}})=\lvert X\rvert^{\overline{s_1}-s_2}&\int_{F} \overline{f_{1/2+s_1}}\begin{pmatrix}
            1&\\
            c&1
        \end{pmatrix}\tilde{f}_{1/2+s_2}\begin{pmatrix}
            1&\\
            c&1
        \end{pmatrix}\\
        &\int_{F^{\times}} \lvert W_0\rvert^2  \begin{pmatrix}
            y&\\
            c/X&1
        \end{pmatrix} \lvert y\rvert^{\overline{s_1}-s_2}
        \, \mathrm{d}^{\times}y\, \mathrm{d}c.
\end{align*}
From equation \eqref{eq: calc of princ series vec} we read off that $f_{1/2+s_1}\big(\begin{smallmatrix}
            1& \\
            c&1
        \end{smallmatrix}\big)=\mathbf{1}_{\mathfrak{o}}(c)$, so for $c\in \oo$ we have
        \[\lvert W_0\rvert^2  \begin{pmatrix}
            y&\\
            c/X&1
        \end{pmatrix}=\lvert W_0\rvert^2\begin{pmatrix}
            y&\\
            &1
        \end{pmatrix}\begin{pmatrix}
            1&\\
            c/X&1
        \end{pmatrix}=\lvert W_0\rvert^2  \begin{pmatrix}
            y&\\
            &1
        \end{pmatrix}.\]
Also in the same range for $c\in \oo$, from equation \eqref{eq: calc of opposite princ series vec } we have $\tilde{f}_{1/2+s_2}\big(\begin{smallmatrix}
            1& \\
            c&1
        \end{smallmatrix}\big)=\frac{\zeta(1-2s_2)}{\zeta(1)}$ hence
\begin{align*}
    \Psi(\overline{f_{1/2+s_1}}(.x)\tilde{f}_{1/2+s_2}(.x),W_{0},\overline{W_{0}})&=\lvert X\rvert^{\overline{s_1}-s_2}\frac{\zeta(1-2s_2)}{\zeta(1)}\int_{F^{\times}} \lvert W_0\rvert^2  \begin{pmatrix}
            y&\\
            &1
        \end{pmatrix}\lvert y\rvert^{\overline{s_1}-s_2}
        \, \mathrm{d}^{\times}y\\
        &=G(\overline{s_1},-s_2)h_{3}(\overline{s_1},s_2),
\end{align*}
where the last equality follows from applying Lemma \ref{Lem: Whittaker integral calc} with $s=\overline{s_1}-s_2$. 
\end{proof}
Finally, we have a more complicated last term
\begin{proof}[Proof of Proposition \ref{Prop: Local Zeta integrals} (iv)]
Recall from \ref{section: Ramified calculations} that we denote $X=\varpi_v^{-r_v}$.
    As before, we drop the subscript $v$ and write
    \begin{align*}
        \Psi(\overline{\tilde{f}_{1/2+s_1}} (.x)\tilde{f}_{1/2+s_2} (.x),W_{0},\overline{W_{0}} )=\lvert X\rvert^{-\overline{s_1}-s_2}\int_{F} &\overline{\tilde{f}_{1/2+s_1}}\begin{pmatrix}
            1&\\
            c&1
        \end{pmatrix}\tilde{f}_{1/2+s_2}\begin{pmatrix}
            1&\\
            c&1
        \end{pmatrix}\\
        &\int_{F^{\times}} \lvert W_0\rvert^2  \begin{pmatrix}
            y&\\
            c/X&1
        \end{pmatrix} \lvert y\rvert^{-\overline{s_1}-s_2}
        \, \mathrm{d}^{\times}y\, \mathrm{d}c.
    \end{align*}
This time the domain of  $\overline{\tilde{f}_{1/2+s_1}}\big(\begin{smallmatrix}
            1& \\
            c&1
        \end{smallmatrix}\big)\tilde{f}_{1/2+s_2}\big(\begin{smallmatrix}
            1& \\
            c&1
        \end{smallmatrix}\big)$ is not contained in $\mathfrak{o}$ so we cannot guarantee that $\big(\begin{smallmatrix}
            1&\\
            c/X&1
\end{smallmatrix}\big)\in \text{GL}_2(\mathfrak{o}).$ 
We split the integral as
\begin{equation}\label{eq: calc of ram zeta int 1}
    \begin{split}
        &\lvert X\rvert^{-\overline{s_1}-s_2}\int_{\p_v^{-r_v}} \overline{\tilde{f}_{1/2+s_1}}\begin{pmatrix}
            1&\\
            c&1
        \end{pmatrix}\tilde{f}_{1/2+s_2}\begin{pmatrix}
            1&\\
            c&1
        \end{pmatrix}\int_{F^{\times}_v} \lvert W_0\rvert^2  \begin{pmatrix}
            y&\\
            c/X&1
        \end{pmatrix} \lvert y\rvert^{-\overline{s_1}-s_2}
        \, \mathrm{d}^{\times}y\, \mathrm{d}c\\
        +&\lvert X\rvert^{-\overline{s_1}-s_2}\int_{F_v\bs \p_v^{-r_v}} \overline{\tilde{f}_{1/2+s_1}}\begin{pmatrix}
            1&\\
            c&1
        \end{pmatrix}\tilde{f}_{1/2+s_2}\begin{pmatrix}
            1&\\
            c&1
        \end{pmatrix}\int_{F^{\times}_v} \lvert W_0\rvert^2  \begin{pmatrix}
            y&\\
            c/X&1
        \end{pmatrix} \lvert y\rvert^{-\overline{s_1}-s_2}
        \, \mathrm{d}^{\times}y\, \mathrm{d}c.
    \end{split}
\end{equation}
For the first summand in equation \eqref{eq: calc of ram zeta int 1}, we use $G(\oo)$ invariance of the Whittaker function, and the integrals separate into a product
\begin{equation}\label{eq: calc of ram zeta int 2}
    \lvert X\rvert^{-\overline{s_1}-s_2}\int_{\p^{-r}} \overline{\tilde{f}_{1/2+s_1}}\begin{pmatrix}
            1&\\
            c&1
        \end{pmatrix}\tilde{f}_{1/2+s_2}\begin{pmatrix}
            1&\\
            c&1
        \end{pmatrix}\, \mathrm{d}c \int_{F^{\times}} \lvert W_0\rvert^2  \begin{pmatrix}
            y&\\
            &1
        \end{pmatrix} \lvert y\rvert^{-\overline{s_1}-s_2}
        \, \mathrm{d}^{\times}y,
\end{equation}
where as before we apply Lemma \ref{Lem: Whittaker integral calc} with $s=-\overline{s_1}-s_2$ and obtain that the integral over $y$ in equation \eqref{eq: calc of ram zeta int 2} is $\frac{L(1+\overline{s_1}+s_2,\pi_0\otimes \tilde{\pi}_0)}{\zeta(2+2\overline{s_1}+s_2)}.$
The integral over $y$ above is $\frac{L_v(1+\overline{s_1}+s_2,\pi_0\otimes \tilde{\pi}_0)}{\zeta_v(2+2\overline{s_1}+s_2)}.$
For the integral over $c$ in equation \eqref{eq: calc of ram zeta int 2}, we write it as
\[\int_{F} \overline{\tilde{f}_{1/2+s_1}}\begin{pmatrix}
            1&\\
            c&1
        \end{pmatrix}\tilde{f}_{1/2+s_2}\begin{pmatrix}
            1&\\
            c&1
        \end{pmatrix}\, \mathrm{d}c-\int_{F\bs \p^{-r}} \overline{\tilde{f}_{1/2+s_1}}\begin{pmatrix}
            1&\\
            c&1
        \end{pmatrix}\tilde{f}_{1/2+s_2}\begin{pmatrix}
            1&\\
            c&1
        \end{pmatrix}\, \mathrm{d}c,\]
where for the second integral we recall from equation \eqref{eq: calc of opposite princ series vec } that for the range of $c\not\in \oo$ we have $\tilde{f}_{s}\big(\begin{smallmatrix}
            1& \\
            c&1
        \end{smallmatrix}\big)=\frac{\zeta(2(1-s))}{\zeta(2s-1)} \lvert c\rvert^{-2(1-s)}$.
Therefore we obtain
\begin{equation}\label{eq: calc of integral of ftilde}
\begin{split}
    \int_{F\bs \p^{-r}} \overline{\tilde{f}_{1/2+s_1}}\begin{pmatrix}
            1&\\
            c&1
        \end{pmatrix}\tilde{f}_{1/2+s_2}\begin{pmatrix}
            1&\\
            c&1
        \end{pmatrix}\, \mathrm{d}c&=\frac{\zeta(1-2\overline{s_1})\zeta(1-2s_2)}{\zeta(2\overline{s_1})\zeta(2s_2)}\int_{F\bs \p^{-r_v}} \lvert c\rvert^{2(\overline{s_1}+s_2-1)} \, \mathrm{d}c\\
        &=\frac{\zeta(1-2\overline{s_1})\zeta(1-2s_2)\zeta(1-2\overline{s_1}-2s_2)}{\zeta(1)\zeta(2\overline{s_1})\zeta(2s_2)} N(\p)^{-(r+1)(1-2\overline{s_1}-2s_2)}.
\end{split}
\end{equation}
We split the first integral into two ranges
\begin{align*}
    \int_{F} \overline{\tilde{f}_{1/2+s_1}}\begin{pmatrix}
            1&\\
            c&1
        \end{pmatrix}\tilde{f}_{1/2+s_2}\begin{pmatrix}
            1&\\
            c&1
        \end{pmatrix}\, \mathrm{d}c&=\int_{\oo} \overline{\tilde{f}_{1/2+s_1}}\begin{pmatrix}
            1&\\
            c&1
        \end{pmatrix}\tilde{f}_{1/2+s_2}\begin{pmatrix}
            1&\\
            c&1
        \end{pmatrix}\, \mathrm{d}c+\\
        &+\int_{F\setminus \oo} \overline{\tilde{f}_{1/2+s_1}}\begin{pmatrix}
            1&\\
            c&1
        \end{pmatrix}\tilde{f}_{1/2+s_2}\begin{pmatrix}
            1&\\
            c&1
        \end{pmatrix}\, \mathrm{d}c
\end{align*}
and using equation \eqref{eq: calc of opposite princ series vec } as before we calculate
\[\int_{\oo} \overline{\tilde{f}_{1/2+s_1}}\begin{pmatrix}
            1&\\
            c&1
        \end{pmatrix}\tilde{f}_{1/2+s_2}\begin{pmatrix}
            1&\\
            c&1
        \end{pmatrix}\, \mathrm{d}c=\frac{\zeta(1-2\overline{s_1})\zeta(1-2s_2)}{\zeta^2(1)},\]
and
\[\int_{F\setminus \oo} \overline{\tilde{f}_{1/2+s_1}}\begin{pmatrix}
            1&\\
            c&1
        \end{pmatrix}\tilde{f}_{1/2+s_2}\begin{pmatrix}
            1&\\
            c&1
        \end{pmatrix}\, \mathrm{d}c=\frac{\zeta(1-2\overline{s_1})\zeta(1-2s_2)\zeta(1-2\overline{s_1}-2s_2)}{\zeta(1)\zeta(2\overline{s_1})\zeta(2s_2)} N(\p)^{-(1-2\overline{s_1}-2s_2)}.\]
Therefore, combining the two, we get
\begin{align*}
    \int_{F} \overline{\tilde{f}_{1/2+s_1}}\begin{pmatrix}
            1&\\
            c&1
        \end{pmatrix}\tilde{f}_{1/2+s_2}\begin{pmatrix}
            1&\\
            c&1
        \end{pmatrix}\, \mathrm{d}c=&\frac{\zeta(1-2\overline{s_1})\zeta(1-2s_2)\zeta(1-2\overline{s_1}-2s_2)}{\zeta(1)}\cdot\\
        \Big( \left(1-N(\p)^{-1}\right)\left(1-N(\p)^{-(1-2\overline{s_1}-2s_2)}\right)
        +& N(\p)^{-(1-2\overline{s_1}-2s_2)}\left(1-N(\p)^{-2\overline{s_1}}\right)\left(1-N(\p)^{-2s_2}\right)\Big)\\
        =&\frac{\zeta(1-2\overline{s_1}-2s_2)}{\zeta(1)}.
\end{align*}
Therefore, after recalling the definitions in equation \eqref{eq: def of G function}, the first summand in equation \eqref{eq: calc of ram zeta int 1} is
\begin{equation}\label{eq: calc of ram zeta int 3}
G(-\overline{s_1},-s_2)h_{4}(\overline{s_1},s_2)\left(1-\frac{\zeta(1-2\overline{s_1})\zeta(1-2s_2)}{\zeta(2\overline{s_1})\zeta(2s_2)}N(\p)^{-(r+1)(1-2\overline{s_1}-2s_2)}\right).
\end{equation}

For the remaining term in equation \eqref{eq: def of G function} we write the Iwasawa decomposition 
\[\begin{pmatrix}
    y&\\
    \alpha&1
\end{pmatrix}=\begin{pmatrix}
    \alpha&\\
    &\alpha
\end{pmatrix}\begin{pmatrix}
    1& y/\alpha\\
    &1
\end{pmatrix}\begin{pmatrix}
    y/\alpha^{2}&\\
    &1
\end{pmatrix}\begin{pmatrix}
    &-1\\
    1&
\end{pmatrix}\begin{pmatrix}
    1&1/\alpha\\
    &1
\end{pmatrix}\]
for $\alpha^{-1}\in \oo$. Now, since $W_0$ is the spherical vector of a representation with the trivial central character, we calculate
\[\big\lvert W_0 \big\rvert^2 \begin{pmatrix}
    y&\\
    \alpha&1
\end{pmatrix}=\lvert \psi\rvert^2\begin{pmatrix}
    1&y/\alpha\\
    &1
\end{pmatrix} \big\lvert W_0 \big\rvert^2\begin{pmatrix}
    y/\alpha^{2}&\\
    &1
\end{pmatrix}=\big\lvert W_0 \big\rvert^2\begin{pmatrix}
    y/\alpha^{2}&\\
    &1
\end{pmatrix}.\]
So for a fixed $c$ we have
\begin{align*}
    \int_{F^{\times}} \lvert W_0\rvert^2  \begin{pmatrix}
            y&\\
            c/X&1
        \end{pmatrix} \lvert y\rvert^{-\overline{s_1}-s_2} \, \mathrm{d}^{\times}y&=\int_{F^{\times}} \lvert W_0\rvert^2  \begin{pmatrix}
            yX^2/c^2&\\
            &1
        \end{pmatrix} \lvert y\rvert^{-\overline{s_1}-s_2}\, \mathrm{d}^{\times}y=\\
 &=\lvert X^2/c^2\rvert^{-\overline{s_1}-s_2}\int_{F^{\times}} \lvert W_0\rvert^2  \begin{pmatrix}
            y&\\
            &1
        \end{pmatrix} \lvert y\rvert^{-\overline{s_1}-s_2}\, \mathrm{d}^{\times}y=\\
 &=\lvert c\rvert^{2(\overline{s_1}+s_2)}
 )\lvert X\rvert^{-2\overline{s_1}-2s_2} \frac{L(1-\overline{s_1}-s_2,\pi\otimes\widetilde{\pi})}{\zeta(2-2\overline{s_1}-2s_2)}.
\end{align*}
Therefore, as in equation \eqref{eq: calc of integral of ftilde}, the outer integral in the second term of equation \eqref{eq: calc of ram zeta int 1} is
\[\int_{F\bs \p^{-r}}\overline{\tilde{f}_{1/2+s_1}}\begin{pmatrix}
            1&\\
            c&1
        \end{pmatrix}\tilde{f}_{1/2+s_2}\begin{pmatrix}
            1&\\
            c&1
        \end{pmatrix}\lvert c\rvert^{2(\overline{s_1}+s_2)}\, \mathrm{d}c=\frac{\zeta(1-2\overline{s_1})\zeta(1-2s_2)}{\zeta(2\overline{s_1})\zeta(2s_2)}N(\p)^{-r-1} \]
and so 
\begin{equation}\label{eq: calc of ram zeta int 4}
    \begin{split}
        &\lvert X\rvert^{-\overline{s_1}-s_2}\int_{F\bs \p^{-r}} \overline{\tilde{f}_{1/2+s_1}}\begin{pmatrix}
            1&\\
            c&1
        \end{pmatrix}\tilde{f}_{1/2+s_2}\begin{pmatrix}
            1&\\
            c&1
        \end{pmatrix}\int_{F^{\times}} \lvert W_0\rvert^2  \begin{pmatrix}
            y&\\
            c/X&1
        \end{pmatrix} \lvert y\rvert^{-\overline{s_1}-s_2}
        \, \mathrm{d}^{\times}y\, \mathrm{d}c\\
        =&\lvert X\rvert^{-\overline{s_1}-s_2}N(\p)^{2r(\overline{s_1}+s_2)} \frac{L(1-\overline{s_1}-s_2,\pi\otimes\widetilde{\pi})}{\zeta(2-2\overline{s_1}-2s_2)}\frac{\zeta(1-2\overline{s_1})\zeta(1-2s_2)}{\zeta(2\overline{s_1})\zeta(2s_2)}N(\p)^{-r-1}\\
        =& G(-\overline{s_1},-s_2)h_{4}(\overline{s_1},s_2)N(\p)^{-r(1-2\overline{s_1}-2s_2)-1}\frac{\zeta(1-2\overline{s_1})\zeta(1-2s_2)\zeta(1)}{\zeta(1-2\overline{s_1}-2s_2)\zeta(2\overline{s_1})\zeta(2s_2)}.
    \end{split}
\end{equation}
We combine both of the terms in equations \eqref{eq: calc of ram zeta int 3} and \eqref{eq: calc of ram zeta int 4} together and they give
\begin{align*}
&\Psi(\overline{\tilde{f}_{1/2+s_1}} (.x)\tilde{f}_{1/2+s_2} (.x),W_{0},\overline{W_{0}} )=   G(-\overline{s_1},-s_2)h_{4}(\overline{s_1},s_2)\Bigg(1-\\
&N(\p)^{-(r+1)(1-2\overline{s_1}-2s_2)}\frac{\zeta(1-2\overline{s_1})\zeta(1-2s_2)\zeta(1)}{\zeta(2\overline{s_1})\zeta(2s_2)}\Big( \zeta^{-1}(1)-N(\p)^{-2\overline{s_1}-2s_2}\zeta^{-1}(1-2\overline{s_1}-2s_2) \Big)\Bigg)\\
&=G(-\overline{s_1},-s_2)h_{4}(\overline{s_1},s_2)\left(1-N(\p)^{-(r+1)(1-2\overline{s_1}-2s_2)}\frac{\zeta(1-2\overline{s_1})\zeta(1-2s_2)\zeta(1)}{\zeta(2\overline{s_1})\zeta(2s_2)\zeta(2\overline{s_1}+2s_2)}\right). 
\end{align*}
Finally, using the Taylor expansion, we write 
\[N(\p)^{-(r+1)(1-2\overline{s_1}-2s_2)}\frac{\zeta(1-2\overline{s_1})\zeta(1-2s_2)\zeta(1)}{\zeta(2\overline{s_1})\zeta(2s_2)\zeta(2\overline{s_1}+2s_2)}=8\overline{s_1}s_2(\overline{s_1}+s_2)\frac{\zeta^3(1) \log^3 N(\p)}{N(\p)^{r+1}}(1+\varepsilon_{\overline{s_1},s_2}),\]
where $\lim_{s_1,s_2\to 0} \varepsilon_{\overline{s_1},s_2}=0.$
\end{proof}

\subsection{Global conclusion}
We combine the results from the previous section and write $z,w$ instead of $\overline{s_1},s_2$. We take the product of the local zeta integrals in the equation \eqref{eq: decomposing degenerate term} over all places. We use Proposition \ref{Prop: Local Zeta integrals} for sufficiently large $\Re(z),\Re(w)$, to write the first three terms in equation \eqref{eq: decomposing degenerate term} as
\[G(z,w)h_1(z,w)+G(-z,w)h_2(z,w)+G(z,-w)h_3(z,w)\]
and for the fourth term, we get
\[ \Psi(\overline{\tilde{f}_{1/2+\overline{z}}}(.x)\tilde{f}_{1/2+w}(.x),W_{0},\overline{W_{0}})=G(-z,-w)h_4(z,w) \prod_{v\mid \q}\left(1-8zw(z+w)\frac{\zeta_v^3(1) \log^3 N(\p_v)}{N(\p_v)^{r_v+1}}(1+\varepsilon_{z,w,v})\right).\]
Recall from the definition of $G(-z,-w)$ (see equation \eqref{eq: def of G function}) that it has simple poles at $z,w,z+w=0$. Therefore, if we expand the product above, then the only nonzero terms in the limit $\lim_{z,w\to 0}$ will be
\[G(-z,-w)h_4(z,w)\left(1-8zw(z+w)\sum_{v\mid \q}\frac{\zeta_v^3(1) \log^3 N(\p_v)}{N(\p_v)^{r_v+1}}\right).\]
The limit of 
\[-G(-z,-w)h_4(z,w)8zw(z+w)\sum_{v\mid \q}\frac{\zeta_v^3(1) \log^3 N(\p_v)}{N(\p_v)^{r_v+1}}\]
exists, and we consider it separately later. Therefore, we are left with the calculation of
\[\lim_{z,w\to 0} G(z,w)h_1(z,w)+G(-z,w)h_2(z,w)+G(z,-w)h_3(z,w)+G(-z,-w)h_4(z,w)\]
for which we have a general result.
\begin{lemma}\label{Lem: Residue cancellation}
    Suppose $H_1,H_2: \C \to \C$ are holomorphic functions in a punctured neighbourhood of zero with at most simple poles at zero. Define $G(z,w):=H_1(z+w)H_2(z)H_2(w)$. Suppose further that $h_j:\C\times \C \to \C$ for $1\leq j\leq 4$ are holomorphic in both variables and satisfy
    \begin{align*}
        h_1(z,0)&=h_3(z,0), \qquad h_2(z,0)=h_4(z,0),\\
       h_1(0,w)&=h_2(0,w), \qquad h_3(0,w)=h_4(0,w),\\
        h_1(-z,z)&=h_4(-z,z), \qquad  h_2(z,z)=h_3(z,z).
    \end{align*}
Then the limit
\[\lim_{z,w\to 0} G(z,w)h_1(z,w)+G(-z,w)h_2(z,w)+G(z,-w)h_3(z,w)+G(-z,-w)h_4(z,w)\]
exists.
\end{lemma}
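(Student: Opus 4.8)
The plan is to prove that the sum of the four terms, which I will denote $F(z,w)$, extends to a function holomorphic in a sufficiently small polydisc $D$ around the origin of $\C^2$; since this is a statement in two complex variables, the desired limit then exists automatically. The starting point is that $G$ is symmetric in its arguments, $G(z,w)=H_1(z+w)H_2(z)H_2(w)$, so that each summand $G(\pm z,\pm w)\,h_j(z,w)$ is holomorphic on $D$ minus one coordinate ``wall'': since the $h_j$ are holomorphic, the first and fourth terms are holomorphic away from $\{z=0\}\cup\{w=0\}\cup\{z+w=0\}$, while the second and third are holomorphic away from $\{z=0\}\cup\{w=0\}\cup\{z=w\}$. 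Hence $F$ is holomorphic on $D\setminus V$, where $V=\{z=0\}\cup\{w=0\}\cup\{z+w=0\}\cup\{z=w\}$; crucially, any two of these four hyperplanes meet only at the origin, so away from $0$ the set $V$ is a disjoint union of smooth hypersurfaces.

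The first step is to show that $F$ extends holomorphically across $V\setminus\{0\}$. Near a point of $\{z=0\}$ with second coordinate $w_0\neq 0$, each of the four summands has at most a simple pole in $z$, coming solely from a factor $H_2(\pm z)$; writing $H_2(u)=\beta/u+\tilde{H}_2(u)$ with $\tilde{H}_2$ holomorphic at $0$, a direct computation gives the coefficient of $1/z$ in $F$ as
\[\beta\,H_1(w)H_2(w)\big(h_1(0,w)-h_2(0,w)\big)+\beta\,H_1(-w)H_2(-w)\big(h_3(0,w)-h_4(0,w)\big),\]
which vanishes by the hypotheses $h_1(0,w)=h_2(0,w)$ and $h_3(0,w)=h_4(0,w)$. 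The other three walls are handled identically: along $\{w=0\}$ the residue is a combination of $h_1(z,0)-h_3(z,0)$ and $h_2(z,0)-h_4(z,0)$; along $\{z+w=0\}$, passing to the coordinate $u=z+w$, only the terms carrying $H_1(\pm(z+w))$ contribute and (after $H_1(u)=\alpha/u+\tilde{H}_1(u)$) the residue is proportional to $h_1(z,-z)-h_4(z,-z)$; and along $\{z=w\}$, in the coordinate $v=z-w$, only the terms with $H_1(\pm(z-w))$ contribute and the residue is proportional to $h_2(z,z)-h_3(z,z)$. Each vanishes by the corresponding hypothesis (with the trivial relabelling $z\mapsto-z$ for the $z+w=0$ wall). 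By uniqueness of holomorphic continuation these local extensions patch with $F$, so $F$ is in fact holomorphic on $D\setminus\{0\}$.

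It remains to remove the isolated singularity at the origin. Since $D\setminus\{0\}$ is a punctured polydisc in $\C^2$ and $\{0\}$ has complex codimension $2$, Hartogs' extension theorem (the Kugelsatz) shows that $F$ extends holomorphically to all of $D$; in particular $\lim_{z,w\to 0}F(z,w)$ exists. If one prefers an elementary argument: by construction $zw(z+w)(z-w)F(z,w)$ is holomorphic on $D$, while $F$ is holomorphic on $D\setminus\{0\}$, so this numerator vanishes along each of $\{z=0\}$, $\{w=0\}$, $\{z+w=0\}$, $\{z=w\}$ and is therefore divisible by each of the pairwise non-associated irreducibles $z,w,z+w,z-w$ in the unique factorization domain $\mathcal{O}_{\C^2,0}$, hence by their product, and the quotient $F$ is holomorphic at the origin.

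The pole-cancellation computations of the second paragraph are routine, so the only genuinely delicate point is the passage from $D\setminus\{0\}$ to $D$: this is where it matters that the four hyperplanes forming $V$ are pairwise disjoint away from the origin, so that the residual singular set is a single point rather than a curve, which is precisely what lets a Hartogs-type removability statement apply.
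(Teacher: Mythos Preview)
Your proof is correct and follows essentially the same approach as the paper: both check that the residues along each of the four singular hyperplanes $\{z=0\}$, $\{w=0\}$, $\{z+w=0\}$, $\{z-w=0\}$ vanish by pairing terms and invoking the corresponding hypothesis on the $h_j$. Your treatment is more careful at the origin---the paper leaves implicit the step from ``each hyperplane singularity is removable'' to ``holomorphic at $0$'', whereas you make this explicit via Hartogs (or equivalently the UFD argument with $zw(z+w)(z-w)$).
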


\begin{proof}
    We have 4 different singularities to consider: $z,w,z\pm w=0$, and we need to show that all of them are removable. To show that the limit as $z$ tends to 0 exists, we split the expression as
    \begin{multline*}
        H_2(w)\Big(H_2(z)H_1(z+w)h_1(z,w)+H_2(-z)H_1(-z+w)h_2(z,w)\Big)\\
        +H_2(-w)\Big(H_2(z)H_1(z-w)h_3(z,w)+H_2(-z)H_1(-z-w)h_4(z,w)\Big)
    \end{multline*}
and notice that the limits in both of the terms exist since $h_1(0,w)=h_2(0,w)$ and $ h_3(0,w)=h_4(0,w)$. Consideration of other singularities follows similarly.
\end{proof}
This gives us all the tools to prove Proposition \ref{Prop: Degenerate term}.

\begin{proof}[Proof of Proposition \ref{Prop: Degenerate term}]
First, combining \eqref{eq: decomposing degenerate term} and Proposition \ref{Prop: Local Zeta integrals} we write
\begin{align*}
    &\langle \mathcal{E}_{\overline{z},w}(.\,x), \lvert\vph_0\rvert^2 \rangle=G(z,w)h_1(z,w)+G(-z,w)h_2(z,w)+\\
+&G(z,-w)h_3(z,w)+G(-z,-w)h_4(z,w)\left(1-8zw(z+w)\sum_{v\mid \q}\frac{\zeta_v^3(1) \log^3 N(\p_v)}{N(\p_v)^{r_v+1}}\right).
\end{align*}
We have \begin{align*}
    \lim_{z,w \to 0} &G(-z,-w)h_4(z,w)8zw(z+w)\sum_{v\mid \q}\frac{\zeta_v^3(1) \log^3 N(\p_v)}{N(\p_v)^{r_v+1}}=\\
    &-\frac{2c^3 \Lambda(1,\pi_0,\text{Ad})N(\mathfrak{d})}{\xi(2)\zeta_{\q}(1)} \sum_{v\mid \q} \frac{\zeta_v^3(1)\log^3 N(\p_v)}{N(\p_v)^{r_v+1}}\ll 1.
\end{align*}
Letting
\[H_1(z)=\frac{N(\q)^{z}N(\mathfrak{d})^{1+2z}}{\xi(2+2z)}\Lambda(1+z,\pi_0\otimes \tilde{\pi}_0)\]
and
\[H_2(z)=\xi(1+2z),\]
we apply Lemma \ref{Lem: Residue cancellation} with $G$ as in the statement. So, the limit 
\[\lim_{z,w\to 0} \left(G(z,w)h_1(z,w)+G(-z,w)h_2(z,w)+G(z,-w)h_3(z,w)+G(-z,-w)h_4(z,w)\right)\]
exists. Now we calculate the limit as a function of $N(\q)$. We write a Laurent expansion of
\[G(z,w)=\frac{N(\q)^{z+w}N(\mathfrak{d})^{1+2z+2w}}{\xi(2+2z+2w)}\xi(1+2z)\xi(1+2w)\Lambda(1+z+w,\pi_0\otimes \tilde{\pi}_0)\]
around the origin and group the terms according to the power of $\log N(\q)$
\[\frac{P_0(z,w)}{zw(z+w)}+\log N(\q) \frac{P_1(z,w)}{zw(z+w)}+\log^2 N(\q) \frac{P_2(z,w)}{zw(z+w)}+\log^3 N(\q) \frac{P_3(z,w)}{zw(z+w)}+\dots, \]
where $P_j$ are power series in $z$ and $w$ with coefficients independent of $\q$ and no coefficients of degree $< j$. Note that because the limit 
\[\lim_{z,w\to 0} \left(G(z,w)h_1(z,w)+G(-z,w)h_2(z,w)+G(z,-w)h_3(z,w)+G(-z,-w)h_4(z,w)\right)\]
exists, we can ignore the terms with positive degree in $z$ and $w$. So we can omit 
\[\log^4 N(\q)\frac{P_4(z,w)}{zw(z+w)}+\log^5 N(\q) \frac{P_5(z,w)}{zw(z+w)}+\log^6 N(\q) \frac{P_6(z,w)}{zw(z+w)}+\dots, \]
and instead consider
\[\frac{P_0(z,w)}{zw(z+w)}+\log N(\q) \frac{P_1(z,w)}{zw(z+w)}+\log^2 N(\q) \frac{P_2(z,w)}{zw(z+w)}+\log^3 N(\q) \frac{P_3(z,w)}{zw(z+w)}, \]
where now $P_j$ are polynomials in $z$ and $w$ of combined degree $\leq 3$ with coefficients independent of $\q$ and $P_3$ is a homogeneous polynomial of degree $3$.

For $h_j$ we know by Lemma \ref{Lem: Taylor coefficients of zetas} that the coefficients of the terms of degree $m$ are bounded by\\
$\min \{ \omega_F(\q)^m,(\log \log \mathfrak{r})^m\}$. Therefore, combining the Taylor expansions for $G$ and $h_j$ we see that the main $\q$-dependent term is $\log^3 N(\q)$. For example the main term contribution of $G_1(z,w)h_1(z,w)$ is
\begin{equation*}
    \frac{\log^3 N(\q)}{6}\frac{(z+w)^3}{2z 2w(z+w)}\frac{\xi^{*}(1)^2 \Lambda^* (1,\pi_0\otimes\tilde{\pi_0})}{\xi(2)}N(\mathfrak{d})h_1(0,0).
\end{equation*}
Therefore, combining the 4 main terms gives 
\begin{align*}
    \log^3 N(\q)\frac{\xi^{*}(1)^2 \Lambda^* (1,\pi_0\otimes\tilde{\pi_0})}{6\xi(2)}N(\mathfrak{d})h_1(0,0)\left( \frac{(z+w)^2}{4zw}-\frac{(z-w)^2}{4zw}-\frac{(z-w)^2}{4zw}+\frac{(z+w)^2}{4zw}\right)\\
    =\log^3 N(\q)\frac{\xi^{*}(1)^3 \Lambda (1,\pi_0,\mathrm{Ad})}{3\xi(2)}N(\mathfrak{d})\zeta_{\q}^{-2}(1).
\end{align*}
Hence, the whole limit becomes
\begin{align*}
    \lim_{z,w\to 0}\langle \mathcal{E}_{\overline{z},w}(.\,x), \lvert\vph_0\rvert^2 \rangle=&N(\mathfrak{d})\frac{\xi^*(1)^3 \Lambda(1,\pi_0,\text{Ad})}{3\xi(2)\zeta_{\q}^2(1)}\log^3 N(\q)+\\
    +&A'_{1,\q} \log^2 N(\q)+A'_{2,\q} \log N(\q)+A'_{3,\q},
\end{align*}
where $A'_{m,\q}\ll_{F,\pi_0} \min \{ \omega_F(\q)^m,(\log \log \mathfrak{r})^m\}$. After we multiply both sides by $N(\q) \frac{\zeta_{\q}^3(1)}{\zeta_{\q}(2)}$ and use the estimate $\zeta_\q(1)\ll_F \min \{ \omega_F(\q),\log \log \mathfrak{r}\}$ we get that
\begin{align*}
  N(\q) \frac{\zeta_{\q}^3(1)}{\zeta_{\q}(2)}\lim_{s_1,s_2\to 0} \langle \mathcal{E}_{s_1,s_2}(.\,x),\lvert \vph_0\rvert^2 \rangle=& \mathrm{vol}^{\,-1}(K_\q)\Big( \xi^*(1)^3 N(\mathfrak{d})\frac{\Lambda(1,\pi_0,\text{Ad})}{\xi(2)}  \log^3 N(\q)+\\
  +&A_{1,\q} \log^2N(\q)+A_{2,\q} \log N(\q)+A_{3,\q}\Big),  
\end{align*}
    where $A_{j,\q}$ are explicit $\q$-dependent terms satisfying $A_{j,\q}\ll_{F,\pi_0} \min \{ \omega_F(\q)^{j+2},(\log \log \mathfrak{r})^{j+2}\}$.
\end{proof}
\begin{remark}\label{remark: coefficients Ajq for prime or depth}
    In fact, if $\q=\p^n$ is a power of a prime ideal, where $N(\p)\to \infty$ or $n\to \infty$, by using Remark \ref{remark: coeffs of hi for p^n} and following the proof of Proposition \ref{Prop: Degenerate term} we can take $A_{j,\q}$ to be $\q$-independent constants.
\end{remark}
\section{Regularised term}\label{section: Regularised term}
In this section, we study the rest of the terms on the period side of the equation \eqref{eq: period side reg spec decomp} and we show that all of these terms contribute to the error term.
\begin{proposition}\label{Prop: Regularised term}
    We have for any $\varepsilon>0$
    \begin{align*}
     \lim_{s_1,s_2\to 0}\int_{\hat{\mathbb{X}}}\sum_{\vph\in \mathcal{B}(\pi)}\frac{1}{\lVert \vph\rVert^2}\langle \overline{\mathrm{Eis}(f_{1/2+s_1})}(.\,x)\mathrm{Eis}(f_{1/2+s_2})(.x),\vph \rangle_{\mathrm{reg}} \langle \vph,\lvert \vph_0\rvert^2 \rangle\, \mathrm{d}\mu_{\mathrm{aut}}(\pi)\ll_{\varepsilon,\pi_0} N(\q)^{-1/2+\theta+\varepsilon},
    \end{align*}
    where $\langle,\rangle_{\mathrm{reg}}$ is defined in section \ref{section: RegSpecDecomp}, and $\theta$ is as in \ref{section: Notations}.
\end{proposition}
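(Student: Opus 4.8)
The plan is to use that $\lvert\vph_0\rvert^2$ is spherical at every place to collapse the spectral integral onto a $\q$-independent family, and then to read off the power saving in $N(\q)$ directly from Proposition \ref{Lem: Regularised triple product calc}. As indicated at the start of this section, only the generic spectrum survives the regularisation, so the integral runs over $\widehat{\mathbb{X}}_{\mathrm{gen}}$. Since $\vph_0$ is unramified at all places, $\lvert\vph_0\rvert^2$ is right $\mathrm{K}(\A)$-invariant, so its projection onto any $\pi$ is a $\mathrm{K}(\A)$-fixed vector of $\pi$; this vanishes unless $\pi\in\widehat{\mathbb{X}}_{\mathrm{gen}}^{\mathrm{unr}}$, and in that case the inner sum collapses to the everywhere-spherical vector $\vph_\pi$, so that the integrand becomes
\[\frac{1}{\lVert\vph_\pi\rVert^2}\,\langle\overline{\mathrm{Eis}(f_{1/2+s_1})}(.\,x)\mathrm{Eis}(f_{1/2+s_2})(.\,x),\vph_\pi\rangle_{\mathrm{reg}}\,\langle\vph_\pi,\lvert\vph_0\rvert^2\rangle.\]

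Next I would invoke Proposition \ref{Lem: Regularised triple product calc}, which writes $\langle\overline{\mathrm{Eis}(f_{1/2+s_1})}(.\,x)\mathrm{Eis}(f_{1/2+s_2})(.\,x),\vph_\pi\rangle_{\mathrm{reg}}$ as a product of local zeta integrals: an archimedean factor of rapid decay in $C_\infty(\pi)$ (the archimedean data being the fixed Gaussians of Section \ref{section: Set-up}), unramified factors at $v\nmid\q\mathfrak{d}\infty$ which assemble, up to bounded constants, into $\sqrt{\lvert\Lambda(1/2,\pi\otimes\mathcal{I}(1/2)\otimes\mathcal{I}(1/2))\rvert}$, and the factors at $v\mid\q$. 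The latter carry the saving: the translation by $x_v=\diag{\varpi_v^{-r_v}}$ forces each such factor to be $\ll_\varepsilon N(\p_v)^{-r_v/2+\varepsilon r_v}$ when $\pi_v$ is tempered, with an additional factor at most $N(\p_v)^{\theta r_v}$ for a complementary-series $\pi_v$ — through the Satake bound underlying \eqref{eq: Miyauchi}, since then $W_{\pi_v}\left(\diag{\varpi_v^n}\right)$ can be as large as $N(\p_v)^{-n(1/2-\theta)}$ while the relevant range is $n\asymp r_v$. Multiplying over $v\mid\q$ yields a bound $N(\q)^{-1/2+\theta+\varepsilon}\,\mathcal{A}(\pi)$, where $\mathcal{A}(\pi)\ll_\varepsilon N(\q)^{\varepsilon}$ uniformly and $\mathcal{A}$ decays rapidly in $C_\infty(\pi)$.

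It then remains to bound a $\q$-independent integral over $\widehat{\mathbb{X}}_{\mathrm{gen}}^{\mathrm{unr}}$. For such $\pi$ the period $\langle\vph_\pi,\lvert\vph_0\rvert^2\rangle$ is controlled, via the triple product of the two spherical vectors, by $\sqrt{\lvert\Lambda(1/2,\pi\otimes\pi_0\otimes\tilde{\pi}_0)\rvert}$ up to archimedean factors and $\Lambda(1,\pi,\mathrm{Ad})^{-1/2}$, with $\Lambda(1/2,\pi\otimes\pi_0\otimes\tilde{\pi}_0)=\Lambda(1/2,\pi\otimes\mathrm{Ad}\pi_0)\Lambda(1/2,\pi)$, while $\lVert\vph_\pi\rVert^2=\ell(\pi)\asymp L(1,\pi,\mathrm{Ad})\gg_\varepsilon C_\infty(\pi)^{-\varepsilon}$ since $\pi$ has trivial conductor. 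Hence the whole expression is $\ll_{\pi_0,\varepsilon}N(\q)^{-1/2+\theta+\varepsilon}\int_{\widehat{\mathbb{X}}_{\mathrm{gen}}^{\mathrm{unr}}}C_\infty(\pi)^{B}\,\mathcal{A}(\pi)\,\mathrm{d}\mu_{\mathrm{aut}}(\pi)$ for a fixed $B$, and this last integral converges — by the Weyl law together with standard bounds for the $L$-values and the rapid archimedean decay of $\mathcal{A}$ — to a quantity bounded independently of $\q$. The limit $s_1,s_2\to0$ is then harmless: the Rankin--Selberg $L$-functions of generic $\pi$ are entire, the integrand is continuous at the central point (the pole of the Eisenstein series at the origin being absorbed by the regularisation, as in \cite{MVGL2}), and dominated convergence applies.

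The genuinely delicate work sits inside Proposition \ref{Lem: Regularised triple product calc} rather than here: one has to isolate the $N(\p_v)^{-r_v/2}$ gain at the ramified places cleanly — this is the sole source of the power-saving error term — while losing no more than the advertised $N(\q)^\theta$ on non-tempered local components, and one has to control the continuous part of the spectrum, where the relevant periods become products of shifted completed zeta functions and $L$-factors of $\mathrm{Ad}\pi_0$ and the regularisation must tame their polar behaviour at the origin. Granting that input, Proposition \ref{Prop: Regularised term} reduces to the bookkeeping above.
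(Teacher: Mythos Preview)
Your proof is correct and follows essentially the same strategy as the paper: reduce to the everywhere-unramified generic spectrum via $\mathrm{K}(\A)$-invariance of $\lvert\vph_0\rvert^2$, invoke Lemma~\ref{Lem: Regularised triple product calc} for the $N(\q)^{-1/2+\theta+\varepsilon}$ saving at the places $v\mid\q$, and then bound the residual $\q$-independent integral over $\widehat{\mathbb{X}}_{\mathrm{gen}}^{\mathrm{unr}}$ via the Ichino triple-product formula and the Weyl law. The only cosmetic difference is where the rapid decay in $C_\infty(\pi)$ is placed: you extract it from the archimedean zeta integral in the regularised Eisenstein period, whereas the paper bounds that factor by $\Lambda^2(1/2+\overline{s_1},\tilde\pi)\ll C(\pi)^{O(1)}\lVert\vph_\pi\rVert_\pi^2$ and instead draws the rapid decay from $\langle\vph_\pi,\lvert\vph_0\rvert^2\rangle\ll_{N,\pi_0} C(\pi)^{-N}$ via the Gamma factors in the completed triple-product $L$-function---either choice works.
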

From Proposition \ref{Prop: Degenerate term}, we know that the limit as $s_1,s_2\to 0$ of the degenerate term exists, therefore, it does so for the regularised term as well. So for the rest of this section, we take $s_2=0$, $s_1\neq 0$, and $\Re(s_1)>-1/2$. This way, all of the upcoming regularised inner products are well defined as discussed in Section \ref{section: RegSpecDecomp}. 

For a generic representation $\pi$, we take an orthogonal basis $\mathcal{B}(\pi)$ consisting of factorable vectors. Recall that the vector $\lvert \vph_0\rvert^2$ is $\mathrm{K}(F_v)$-invariant at all places $v$. Therefore for any $\vph\in \mathcal{B}(\pi)$ (or $\vph$ a Hecke character) we have 
\[\langle \vph,\lvert \vph_0\rvert^2 \rangle=0,\]
unless $\vph$ is $\mathrm{K}(F_v)$ invariant for all places $v$. In fact, if $\pi$ is a trivial representation, then the contribution is
\begin{align*}
    \langle \overline{\mathrm{Eis}(f_{1/2+s_1})}(.\,x)\mathrm{Eis}(f_{1/2})(.x),1\rangle_{\mathrm{reg}}\lVert \vph_0\rVert_{\pi}^2=\langle \mathrm{Eis}(f_{1/2}),\mathrm{Eis}(f_{1/2+s_1})\rangle_{\mathrm{reg}}\lVert \vph_0\rVert_{\pi}^2,
\end{align*}
and the regularised inner product of two Eisenstein series is equal to $0$ \cite[Lemma 3.1]{Han}.

So we only need to consider the generic spectrum for which we have the remaining integral
\begin{equation}\label{eq: regularised generic spectrum}
\int_{\widehat{\mathbb{X}}_{\mathrm{gen}}^{\mathrm{unr}}}\frac{1}{\lVert \vph_{\pi}\rVert_{\pi}^2}\langle \overline{\mathrm{Eis}(f_{1/2+s_1})}(.x)\,\mathrm{Eis}(f_{1/2})(.x),\vph_{\pi} \rangle_{\mathrm{reg}} \langle \vph_{\pi},\lvert \vph_0\rvert^2 \rangle\, \mathrm{d}\mu_{\mathrm{aut}}(\pi)
\end{equation}
where $\widehat{\mathbb{X}}_{\mathrm{gen}}^{\mathrm{unr}}$ is defined in Section \ref{section: Autreps} and $\vph_{\pi}\in \pi$ is the vector such that the image of $\vph_{\pi}$ in the Whittaker model of $\pi$, $W_{\vph_{\pi}}=\prod_v W_{\vph_{\pi},v}$ is factorable , and $W_{\vph_{\pi},v}$ is the spherical vector satisfying $W_{\vph_{\pi},v}(1)=1$. 

The final step before proving the main proposition of this section is to study the integrand of the integral in equation \eqref{eq: regularised generic spectrum}.
\begin{lemma}\label{Lem: Regularised triple product calc}
    Let $\pi\in \widehat{\mathbb{X}}_{\mathrm{gen}}^{\mathrm{unr}}$ and $\vph_{\pi}\in\pi$ be the normalised vector unramified at all places. For any $\varepsilon,N>0$ we have
    \begin{itemize}
        \item $ \langle\overline{\mathrm{Eis}(f_{1/2+s_1})}(.x)\mathrm{Eis}(f_{1/2})(.x),\vph_{\pi} \rangle_{\mathrm{reg}}\ll_{\varepsilon,F}  \Lambda^2(1/2+\overline{s_1},\tilde{\pi})N(\q)^{-1/2+\theta+\varepsilon}$, if $\pi$ is cuspidal, \vspace{0.4pc}
        \item $\langle\overline{\mathrm{Eis}(f_{1/2+s_1})}(.x)\mathrm{Eis}(f_{1/2})(.x),\vph_{\pi} \rangle_{\mathrm{reg}}\ll_{\varepsilon,F} \Lambda^2(1/2+\overline{s_1},\tilde{\pi}) N(\q)^{-1/2+\varepsilon}$, if $\pi$ is Eisenstein,
    \end{itemize}
where $s_1$ varies over a compact neighbourhood of $0$.
\end{lemma}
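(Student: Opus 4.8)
The plan is to unfold the regularised triple product into an adelic integral, factor it into local integrals over all places, and then estimate the local factors at $v \mid \q$ using the explicit expressions for $f_{1/2+s_1,v}$ and $\tilde{f}_{1/2+s_1,v}$ on lower-triangular matrices computed in Section \ref{section: Ramified calculations}. First I would write $\mathrm{Eis}(f_{1/2+s_1})\mathrm{Eis}(f_{1/2})$ as an Eisenstein series in one variable (or use the Rankin–Selberg unfolding directly against the Whittaker function $W_{\vph_\pi}$), so that the regularised inner product against $\vph_\pi \in \pi$ becomes the global zeta integral $\Psi(f_{?,v}(.\,x), W_{\vph_\pi}, \overline{\cdot})$ type expression; the regularisation is harmless because we have set $s_2 = 0$, $s_1 \neq 0$, $\Re(s_1) > -1/2$, which is exactly the range in which the regularised integrals are well defined and represent honest $L$-values. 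At the archimedean places and at $v \nmid \q\mathfrak{d}$ the local integrals are the standard Rankin–Selberg ones and reproduce a product of completed $L$-functions $\Lambda(1/2+\overline{s_1},\tilde\pi)$ (squared, since two Eisenstein series are present) together with bounded gamma factors; this accounts for the $\Lambda^2(1/2+\overline{s_1},\tilde\pi)$ prefactor in the statement.

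The heart of the matter is the local computation at a place $v \mid \q$. Here $x_v = \mathrm{diag}(\varpi_v^{-r_v},1)$, and after a Bruhat-coordinate substitution $c \mapsto c/X$ (with $X = \varpi_v^{-r_v}$, as in Section \ref{section: Ramified calculations}) one picks up an explicit power of $|X| = N(\p_v)^{r_v}$ whose real part is controlled by $\Re(s_1)$. The key point is that the support and decay of $f_{1/2+s_1,v}(\begin{smallmatrix}1&\\c&1\end{smallmatrix}) = \mathbf{1}_{\oo_v}(c)$ (respectively the piecewise formula \eqref{eq: calc of opposite princ series vec } for $\tilde f$) forces the matrix argument $\begin{smallmatrix}y&\\c/X&1\end{smallmatrix}$ to lie far from $\G(\oo_v)$ on most of the range of integration, so the spherical Whittaker function $W_{\vph_\pi,v}$ is evaluated deep in its decaying regime via the Iwasawa decomposition (exactly the identity $|W_0|^2(\begin{smallmatrix}y&\\ \alpha&1\end{smallmatrix}) = |W_0|^2(\begin{smallmatrix}y/\alpha^2&\\&1\end{smallmatrix})$ used in the proof of Proposition \ref{Prop: Local Zeta integrals}(iv)). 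Carrying this out, each local factor at $v\mid\q$ is bounded by $N(\p_v)^{-r_v/2 + \varepsilon}$ times a factor involving the Satake parameters of $\pi_v$; invoking the bound $\theta$ towards Ramanujan for $\GL(2)$ (and its triviality, $\theta$ absent, for the Eisenstein $\pi$ whose parameters are unitary characters) converts the Satake contribution into $N(\p_v)^{r_v\theta}$ in the cuspidal case and nothing in the Eisenstein case. Taking the product over $v\mid\q$ gives the claimed $N(\q)^{-1/2+\theta+\varepsilon}$ and $N(\q)^{-1/2+\varepsilon}$ respectively.

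The main obstacle I anticipate is bookkeeping the regularisation together with the shift by $x$: one must check that the Eisenstein series $\mathcal{E}$ subtracted in forming $\langle \cdot,\cdot\rangle_{\mathrm{reg}}$ does not interfere with the unfolding — concretely, that its constant-term contributions are orthogonal to the cuspidal/unramified $\vph_\pi$ or otherwise produce only lower-order terms — and that the zeta integral one ends up with genuinely analytically continues to the $L$-value $\Lambda^2(1/2+\overline{s_1},\tilde\pi)$ on the compact neighbourhood of $s_1 = 0$, uniformly in $\pi$. The cleanest route is probably to appeal to the regularised Rankin–Selberg machinery of \cite[Section 4.3]{MVGL2} to justify $\langle\overline{\mathrm{Eis}(f_{1/2+s_1})}(.x)\mathrm{Eis}(f_{1/2})(.x),\vph_\pi\rangle_{\mathrm{reg}} = \Psi^{\mathrm{reg}}$ as an Eulerian integral, and then the remaining work is purely local and of the same flavour as Section \ref{section: Ramified calculations}. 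A secondary technical point is uniformity in $s_1$: since $s_1$ ranges over a fixed compact set and $\Re(s_1) > -1/2$, all the geometric series and gamma factors in the local computations converge locally uniformly, so the implied constants can be taken independent of $s_1$, which is what the last clause of the statement requires.
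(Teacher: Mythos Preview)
Your high-level plan---unfold via the regularised Rankin--Selberg machinery of \cite[\S4.4.3]{MVGL2}, factor into an Euler product, meromorphically continue, and estimate the local integrals at $v\mid\q$ via Satake-parameter bounds---is correct and is exactly what the paper does. But your description of the local mechanism at $v\mid\q$ is backwards, and following it literally would not work. After moving the shift onto $\vph_\pi$ (equivalently, after your substitution $c\mapsto c/X$), the support condition $c\in\oo_v$ coming from $f_{1/2+s_1,v}$ forces the lower-triangular entry of the argument of $W_{\vph_\pi,v}$ to be $c\varpi_v^{r_v}\in\p_v^{r_v}\subset\oo_v$; hence $\big(\begin{smallmatrix}1&\\c\varpi_v^{r_v}&1\end{smallmatrix}\big)$ lies \emph{inside} $\G(\oo_v)$, not far from it, and is removed by plain sphericity of $W_{\vph_\pi,v}$. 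The Iwasawa identity from the proof of Proposition~\ref{Prop: Local Zeta integrals}(iv) requires $\alpha^{-1}\in\oo_v$ and is simply inapplicable here. Likewise no $\tilde f$ appears: unfolding a single Eisenstein series produces only $\overline{f_{1/2+s_1}}$.

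The power saving $N(\p_v)^{-r_v/2}$ actually comes from the \emph{diagonal} shift: after removing the $c$-dependence one is left with
\[
\int_{F_v^\times}|y|^{\overline{s_1}-1/2}\,W_{f_{1/2},v}\big(\begin{smallmatrix}y&\\&1\end{smallmatrix}\big)\,\overline{W_{\vph_\pi,v}}\big(\begin{smallmatrix}y\varpi_v^{r_v}&\\&1\end{smallmatrix}\big)\,\d^\times y,
\]
and one first needs an explicit formula for the Whittaker function $W_{f_{1/2},v}$ of the Eisenstein series on the torus---a nontrivial computation you do not mention. The paper computes this directly from $\Phi_v$, obtaining a support condition $y\in\p_v^{-1}$ and a simple closed form. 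Plugging in Shintani's formula \eqref{eq: Miyauchi} then turns the $y$-integral into a sum over $n\ge -1$ involving $\overline{W_{\vph_\pi,v}}(\varpi_v^{n+r_v})\sim N(\p_v)^{-(n+r_v)/2}\cdot(\text{Satake polynomial})$; this is the source of the $N(\p_v)^{-r_v/2}$. A further, not entirely obvious, algebraic simplification of the resulting geometric series extracts the factor $L_v^2(1/2+\overline{s_1},\tilde\pi)$ cleanly, leaving a finite linear combination of $\alpha_i^{r_v},\alpha_i^{r_v\pm1},\alpha_i^{r_v-2}$ bounded by $(r_v+1)\max(|\alpha_1|^{r_v},|\alpha_2|^{r_v})$. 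Taking the product over $v\mid\q$ then yields the stated $N(\q)^{-1/2+\theta+\varepsilon}$ (cuspidal) or $N(\q)^{-1/2+\varepsilon}$ (Eisenstein).
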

\begin{proof}
    We combine the arguments for the two parts. We start by taking $s_1$ such that $\Re(s_1)\gg 1$. Then by the regularised Rankin--Selberg argument  \cite[Section 4.4.3.]{MVGL2}  we get
    \[\langle \overline{\mathrm{Eis}(f_{1/2+s_1})}\mathrm{Eis}(f_{1/2}),\vph_{\pi}(.x^{-1}) \rangle_{\text{reg}}=\int_{\N(\A)\bs \G(\A)} \overline{W_{\vph_{\pi}}}(.x^{-1})W_{f_{1/2}} \overline{f_{1/2+s_1}},\]
    for any generic $\pi$, where $W_{\vph_{\pi}}, W_{f_{1/2}}$ are the corresponding Whittaker functions, and the integral on the right converges absolutely. Since $\pi_v$ is unramified at all places $v$ we can write the integral as
    \[\int_{\N(\A)\bs \G(\A)} \overline{W_{\vph_{\pi}}}(.x^{-1})W_{f_{1/2}} \overline{f_{1/2+s_1}}=L^{\q\mathfrak{d}}(1/2+\overline{s_1},\mathcal{I}(1/2)\times \tilde{\pi}) \Psi_{\q\mathfrak{d}}, \]
    where $\Psi_{\q\mathfrak{d}}$ is a product of local zeta integrals at places $v\mid \q\mathfrak{d}$. Since the Rankin--Selberg $L$--function admits a meromorphic continuation, we can make sense of the expression on the right-hand side above for any $s_1$. In fact, this is the value of the regularised inner product \emph{i.e.}
    \[\langle \overline{\mathrm{Eis}(f_{1/2+s_1})}\mathrm{Eis}(f_{1/2}),\vph_{\pi}(.x^{-1}) \rangle_{\text{reg}}=L^{\q\mathfrak{d}}(1/2+\overline{s_1},\mathcal{I}(1/2)\times \tilde{\pi}) \Psi_{\q\mathfrak{d}}\]
    for any value of $s_1$, see \cite[Section 4.4.3.]{MVGL2}. The calculations at $v\mid \mathfrak{d}$ give a multiple of a local $L$--function; see \cite[ Remark 4.1.]{JanaNunesMom}.
    
    For the places $v\mid \q$ we do the Bruhat decomposition
    \begin{equation}\label{eq: calc of local zeta in reg term}
    \begin{split}
    \int_{\N(F_v)\bs \G(F_v)} W_{f_{1/2,v}} \overline{W_{\vph,v}(.x_v^{-1})}&\; \overline{f_{1/2+s_1,v}}=\\
    &=\int_{F_v^{\times}}\int_{F_v}  W_{f_{1/2,v}}  \begin{pmatrix}
            y&\\
            c&1
        \end{pmatrix} \overline{W_{\vph,v}}\begin{pmatrix}
            y\varpi_v^{r_v}&\\
            c\varpi_v^{r_v}&1
        \end{pmatrix}\overline{f_{1/2+s_1}}\begin{pmatrix}
            y&\\
            c&1
        \end{pmatrix}
        \, \mathrm{d}c\,\frac{\d^{\times}y}{\lvert y\rvert}
        \\
        &=\int_{F_v^{\times}}\int_{\mathfrak{o}_v} \lvert y\rvert^{1/2+\overline{s_1}} W_{f_{1/2,v}}  \begin{pmatrix}
            1&\\
            c&1
        \end{pmatrix} \overline{W_{\vph,v}}\begin{pmatrix}
            y\varpi_v^{r_v}&\\
            c\varpi_v^{r_v}&1
        \end{pmatrix}
        \, \mathrm{d}c\,\frac{\d^{\times}y}{\lvert y\rvert}\\
        &=\int_{F_v^{\times}}\int_{\mathfrak{o}_v} \lvert y\rvert^{1/2+\overline{s_1}} W_{f_{1/2,v}}  \begin{pmatrix}
            y&\\
            c&1
        \end{pmatrix} \overline{W_{\vph,v}}\begin{pmatrix}
            y\varpi_v^{r_v}&\\
            &1
        \end{pmatrix}
        \, \mathrm{d}c\,\frac{\d^{\times}y}{\lvert y\rvert}.
        \end{split}
    \end{equation} 
In the final equality, we use the fact that $W_{\vph,v}$ is spherical at all places. Moreover for $c\in \oo_v$ we have
\begin{align*}
   W_{f_{1/2},v}  \begin{pmatrix}
            y&\\
            c&1
        \end{pmatrix}=&\lvert y\rvert^{1/2}\int_{F_v}\int_{F_v^{\times}}\Phi_v\left((t\, a)\begin{pmatrix}
            1&\\
            c&1
        \end{pmatrix}\right)\psi_{0,v}(ya/t) \d^{\times}t\, \d a\\ 
        =&\lvert y\rvert^{1/2}\int_{F_v}\int_{F_v^{\times}}\mathbf{1}_{\oo_v}(t+ca)\mathbf{1}_{\oo_v^{\times}}(a)\psi_{0,v}(ya/t) \d^{\times}t\, \mathrm{d}a\\
        =&\lvert y\rvert^{1/2}\int_{a\in \oo_v^{\times}}\int_{t\in \oo_v}\psi_{0,v}(ya/t) \, \d^{\times}t\, \mathrm{d}a\\
        =&\lvert y\rvert^{1/2}\int_{\oo_v}\mathbf{1}_{\oo_v}(y/t)-N(\p_v)^{-1}\mathbf{1}_{\oo_v}(\varpi_v y/t)\,\d^{\times}t \\
        =&
        \begin{cases}
            \lvert y\rvert^{1/2} \left(-N(\p_v)^{-1}+\frac{n+1}{\zeta_v(1)}\right) \quad &\text{if}\;  y\in \p_v^{n}\;\,\mathrm{for}\;\, n\geq-1,  \\
            0 & \mathrm{otherwise},
        \end{cases}
\end{align*}
where we calculated the $a$ integral by splitting it as $\int_{\oo_v}-\int_{\varpi_v \oo_v}$. Thus, the whole integrand in equation \eqref{eq: calc of local zeta in reg term} is independent of $c$, and we get
\begin{align*}
    &\int_{\N(F_v)\bs \G(F_v)} W_{f_{1/2,v}} \overline{W_{\vph,v}(.x^{-1})}\, \, \overline{f_{1/2+s_1,v}}=\int_{F_v^{\times}} \lvert y\rvert^{-1/2+\overline{s_1}} W_{f_{1/2},v}  \begin{pmatrix}
            y&\\
            &1
        \end{pmatrix} \overline{W_{\vph,v}}\begin{pmatrix}
            y\varpi_v^{r_v}&\\
            &1
        \end{pmatrix}
        \, \mathrm{d}^{\times}y=\\
        =&- N(\p_v)^{-1+\overline{s_1}}\, \overline{W_{\vph,v}}\begin{pmatrix}
            \varpi_v^{r-1}&\\
            &1
        \end{pmatrix} +\sum_{n\geq 0}N(\p_v)^{-n\overline{s_1}}\left(-N(\p_v)^{-1}+\frac{n+1}{\zeta_v(1)}\right) \overline{W_{\vph,v}}\begin{pmatrix}
            \varpi_v^{n+r}&\\
            &1
        \end{pmatrix}.
\end{align*}
We calculate the above in terms of Satake parameters $\alpha_1,\alpha_2$ of $\tilde{\pi}_v$, as defined in Section \ref{section: Notations}. We use the formula of Shintani for the normalised newvector \eqref{eq: Miyauchi} and combine the geometric series to write
\begin{align*}
  -N(\p_v)^{-r_v/2}\Bigg[&N(\p_v)^{-1/2+\overline{s_1}} \frac{\alpha_1^{r_v}-\alpha_2^{r_v}}{\alpha_1-\alpha_2}+\frac{N(\p_v)^{-1}}{\alpha_1-\alpha_2}\left( \frac{\alpha_1^{r_v+1}}{1-\alpha_1 N(\p_v)^{-1/2-\overline{s_1}}}-\frac{\alpha_2^{r_v+1}}{1-\alpha_2 N(\p_v)^{-1/2-\overline{s_1}}}\right)\\
  -&\frac{1-N(\p_v)^{-1}}{\alpha_1-\alpha_2}\left(\frac{\alpha_1^{r_v+1}}{(1-\alpha_1 N(\p_v)^{-1/2-\overline{s_1}})^2}-\frac{\alpha_2^{r_v+1}}{(1-\alpha_2 N(\p_v)^{-1/2-\overline{s_1}})^2}\right)\Bigg] .
\end{align*}
By collecting terms with $\alpha_1$ and $\alpha_2$ in the numerators together the expression simplifies to 
\[-\frac{N(\p_v)^{-r_v/2}}{\alpha_1-\alpha_2}\left( \frac{(N(\p_v)^{-1/2+\overline{s_1}}-\alpha_1)\alpha_1^r}{(1-\alpha_1 N(\p_v)^{-1/2-\overline{s_1}})^2}-\frac{(N(\p_v)^{-1/2+\overline{s_1}}-\alpha_2)\alpha_2^r}{(1-\alpha_2 N(\p_v)^{-1/2-\overline{s_1}})^2}\right).\]
Recall that by definition of the parameters $\alpha_i$, see Section \ref{section: Notations}, we have $L_v(s,\tilde{\pi})=(1-\alpha_1 N(\p_v)^s)^{-1}(1-\alpha_2 N(\p_v)^s)^{-1}.$ So by taking out $L_v^2(1/2+\overline{s_1},\tilde{\pi})$ we arrive at
\begin{align*}
    &N(\p_v)^{-r_v/2} L_v^2(1/2+\overline{s_1},\tilde{\pi})\Bigg(\frac{\alpha_1^{r_v+1}-\alpha_2^{r_v+1}}{\alpha_1-\alpha_2}-N(\p_v)^{-1/2}\left(N(\p_v)^{\overline{s_1}}+2N(\p_v)^{-\overline{s_1}}\right)\frac{\alpha_1^{r_v}-\alpha_2^{r_v}}{\alpha_1-\alpha_2}+\\
    &+N(\p_v)^{-1}\left(2+p^{-2\overline{s_1}}\right) \frac{\alpha_1^{r_v-1}-\alpha_2^{r_v-1}}{\alpha_1-\alpha_2}-N(\p_v)^{-3/2-\overline{s_1}}\frac{\alpha_1^{r_v-2}-\alpha_2^{r_v-2}}{\alpha_1-\alpha_2}\Bigg)\\
    &\ll N(\p_v)^{-r_v/2} L_v^2(1/2+\overline{s_1},\tilde{\pi}) (r_v+1) \max \{\lvert \alpha_1^{r_v}\rvert,\lvert \alpha_2^{r_v}\rvert\}, 
\end{align*}
where the inequality follows from the fact that we take $\overline{s_1}$ in the compact neighbourhood of $0$.
Now if $\pi$ is Eisenstein then $\lvert \alpha_1\rvert=\lvert\alpha_2\rvert=1$ and the bound is
\[\int_{\N(F_v)\bs \G(F_v)} W_{f_{1/2,v}} \overline{W_{\vph,v}(.x^{-1})}\, \, \overline{f_{1/2+s_1,v}}\ll_{\varepsilon} N(\p_v)^{r_v(-1/2+\varepsilon)}L_v^2(1/2+\overline{s_1},\tilde{\pi}),\]
where the implied constant only depends on $\varepsilon$.
Combining bounds for all places $v\mid \q$ and coming back to the global question, we get
\[\langle \overline{\mathrm{Eis}(f_{1/2+s_1})}\mathrm{Eis}(f_{1/2}),\vph(.x^{-1}) \rangle_{\mathrm{reg}} \ll_{\varepsilon,F} \Lambda^2(1/2+\overline{s_1},\tilde{\pi}) N(\q)^{-1/2+\varepsilon}.\]

If $\pi$ is a cuspidal representation, we use the bound on Satake parameters $\max \{\lvert \alpha_1^{r_v}\rvert,\lvert \alpha_2^{r_v}\rvert\}\ll N(\p_v)^{r_v\cdot \theta},$ which implies that
\[\langle \overline{\mathrm{Eis}(f_{1/2+s_1})}\mathrm{Eis}(f_{1/2}),\vph(.x^{-1}) \rangle_{\mathrm{reg}} \ll_{\varepsilon,F} \Lambda^2(1/2+\overline{s_1},\tilde{\pi}) N(\q)^{-1/2+\theta+\varepsilon}.\]
\end{proof}

We have all the tools for the main proposition of this section.
\begin{proof}[Proof of Proposition \ref{Prop: Regularised term}]
    We take $s_1$ close to $0$ and combine the results from the Lemma \ref{Lem: Regularised triple product calc} to get
    \begin{align*}        &\left\lvert\int_{\widehat{\mathbb{X}}_{\mathrm{gen}}^{\mathrm{unr}}}\frac{1}{\lVert \vph_{\pi}\rVert^2_{\pi}}\langle \overline{\mathrm{Eis}(f_{1/2+s_1})}(.x)\mathrm{Eis}(f_{1/2})(.x),\vph_{\pi} \rangle_{\mathrm{reg}} \langle \vph_{\pi},\lvert \vph_0\rvert^2 \rangle\, \mathrm{d}\mu_{\mathrm{aut}}(\pi)\right\rvert \\
    \leq &\int_{\widehat{\mathbb{X}}_{\mathrm{gen}}^{\mathrm{unr}}}\frac{1}{\lVert \vph_{\pi}\rVert^2_{\pi}}\left\lvert\langle \overline{\mathrm{Eis}(f_{1/2+s_1})}(.x)\mathrm{Eis}(f_{1/2})(.x),\vph_{\pi} \rangle_{\mathrm{reg}} \langle \vph_{\pi},\lvert \vph_0\rvert^2 \rangle\right\rvert\, \mathrm{d}\mu_{\mathrm{aut}}(\pi)\\
        \ll_{N,\varepsilon,\vph_0}& N(\q)^{-1/2+\theta+\epsilon} \int_{\widehat{\mathbb{X}}_{\mathrm{gen}}^{\mathrm{unr}}}\frac{1}{\lVert \vph_{\pi}\rVert^2_{\pi}} \Big\lvert\Lambda^2(1/2+\overline{s_1}, \tilde{\pi}) \langle \vph_{\pi},\lvert \vph_0\rvert^2 \rangle\Big\rvert\, \mathrm{d}\mu_{\mathrm{aut}}(\pi).\\
    \end{align*}
We want to show that the final integral converges. To do that we recall that $\vph_{\pi}$ is arithmetically normalised and so, as in Section \ref{section: Calculations on the spectral side}, we obtain that $\lVert \vph_{\pi}\rVert_\pi^2\asymp \Lambda(1,\pi,\mathrm{Ad})$ for $\pi$ cuspidal and $\lVert \vph_{\pi}\rVert^2_{\pi} \asymp \xi^2(1+2it)$ if $\pi=\mathcal{I}(1/2+it)$. In any case, by comparing the Gamma factors and using convexity bound, we get
\[\frac{\Lambda^2(1/2+\overline{s_1}, \tilde{\pi})}{\lVert \vph_{\pi}\rVert_{\pi}^2} \ll C(\pi)^{O(1)}.\]
Finally, by the triple product formula \cite{Ichino2008}, we can realise $\langle \vph_{\pi},\lvert \vph_0\rvert^2 \rangle$ as an $L$--function, namely
\[\langle \vph_{\pi},\lvert \vph_0\rvert^2 \rangle\asymp_{\pi_0} \sqrt{\Lambda(1/2,\pi\otimes \pi_0\otimes\tilde{\pi}_0)}\ll_{N,\pi_0} C(\pi)^{-N},\]
where the upper bound holds for any $N>0.$ We combine the estimates and use the Weyl law \cite{Muller_2008} to conclude that for any $N>2$
\[\int_{\widehat{\mathbb{X}}_{\mathrm{gen}}^{\mathrm{unr}}}\frac{1}{\lVert \vph_{\pi}\rVert^2_{\pi}} \Big\lvert\Lambda^2(1/2+\overline{s_1}, \tilde{\pi}) \langle \vph_{\pi},\lvert \vph_0\rvert^2 \rangle\Big\rvert\, \mathrm{d}\mu_{\mathrm{aut}}(\pi)\ll_{N,\pi_0}  \int_{\widehat{\mathbb{X}}_{\mathrm{gen}}^{\mathrm{unr}}} \lvert C(\pi)\rvert^{-N} \, \mathrm{d}\mu_{\mathrm{aut}}(\pi)\ll_{N} 1,\]
which finishes the proof.
\end{proof}

\section{Local spectral weight}
\begin{proposition}\label{Prop: Local Specwght}
Let $v\mid \q$. For any generic local representation $\pi_v$ of $\G(F_v)$ with $\mathfrak{C}(\pi_{v})\nmid \p_v^{r_v}$ we have $J(f_{1/2,v}(.x_{v}), W_{0,v}, \pi_{v})=0$. In the case when $\mathfrak{C}(\pi_{v})\mid \p_v^{r_v}$ we have
\[J(f_{1/2,v}(.x_{v}), W_{0,v}, \pi_{v})\geq 
    \begin{cases}
         \mathrm{vol}(\mathrm{K}_{\p_v^{r_v}}) \frac{\zeta_{v}(2)}{\zeta_{v}^{3}(1)}\vspace{0.4 pc}; \qquad &\text{if $\pi_v$ is unramified, }\\
         \mathrm{vol}(\mathrm{K}_{\p_v^{r_v}})\frac{L_v(1,\pi\otimes \tilde{\pi})}{\zeta_{v}(1)} \left(1-N(\p_v)^{-1} \lvert \alpha_\pi(v,1)\rvert^2\right); \;&\text{otherwise,}
    \end{cases}\]
where, $\alpha_\pi(v,j)$ are the Satake parameter of $\pi_v$ and we let $\alpha_\pi(v,2)=0$ if $\pi_v$ is ramified. Hence for $N(\p_v)\gg 1$ we have
\[ \mathrm{vol\,}^{-1}(\mathrm{K}_{\p_v^{r_v}}) \zeta_v^2(1)J(f_{1/2,v}(.x_{v}), W_{0,v}, \pi_{v})\geq \frac{1-N(\p_v)^{-1+2\theta} }{1-N(\p_v)^{-1}},\]
where $\theta$ is as in Section \ref{section: Notations}.
\end{proposition}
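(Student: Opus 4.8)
\emph{Overall plan.} I would reduce the Plancherel sum that defines $J(f_{1/2,v}(.x_v),W_{0,v},\pi_v)$ to the single term coming from the newvector of $\pi_v$, and then evaluate that term by the Rankin--Selberg recipe. The starting point is to understand the translated test vector on the maximal compact: writing $X=\varpi_v^{-r_v}$, for $k\in\mathrm{K}(F_v)$ with bottom row $(c,d)$ one has $(0,1)\,kx_v=(cX,d)$, and plugging this into the defining integral \eqref{eq: def of princ series vec} of $f_{1/2,v}$ against the local datum $\Phi_v=\mathbf{1}_{\oo_v}\otimes\mathbf{1}_{\oo_v^{\times}}$ from Section \ref{section: Set-up} and integrating out $t$, I expect to find that $f_{1/2,v}(kx_v)$ vanishes unless $c\in\p_v^{r_v}$ (equivalently $k\in\mathrm{K}_{\p_v^{r_v}}$), in which case it equals $N(\p_v)^{r_v/2}$. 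Together with the transformation property under $\mathrm{A}(F_v)$ this gives, in Iwasawa coordinates for $\N(F_v)\bs\G(F_v)$, the clean formula $f_{1/2,v}(\diag{y}k\,x_v)=|y|^{1/2}N(\p_v)^{r_v/2}\mathbf{1}_{\mathrm{K}_{\p_v^{r_v}}}(k)$.

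\emph{Collapsing the sum.} Substituting this into \eqref{eq: def of local zeta integral} and using that $W_{0,v}$ is right $\mathrm{K}(F_v)$-invariant, for any $W_v\in\mathcal{W}(\pi_v,\psi_v)$ I would obtain
\[\Psi_v\big(f_{1/2,v}(.x_v),W_{0,v},\overline{W_v}\big)=N(\p_v)^{r_v/2}\,\mathrm{vol}(\mathrm{K}_{\p_v^{r_v}})\int_{F_v^{\times}}W_{0,v}(\diag{y})\,\overline{(P_vW_v)(\diag{y})}\,|y|^{-1/2}\,\d^{\times}y,\]
where $P_v=\mathrm{vol}(\mathrm{K}_{\p_v^{r_v}})^{-1}\int_{\mathrm{K}_{\p_v^{r_v}}}\pi_v(k)\,\d k$ is the orthogonal projection onto $\pi_v^{\mathrm{K}_{\p_v^{r_v}}}$. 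Choosing $\mathcal{B}(\pi_v)$ adapted to the decomposition $\pi_v=\pi_v^{\mathrm{K}_{\p_v^{r_v}}}\oplus(\pi_v^{\mathrm{K}_{\p_v^{r_v}}})^{\perp}$, only the basis vectors lying in $\pi_v^{\mathrm{K}_{\p_v^{r_v}}}$ contribute to $\sum_{W_v\in\mathcal{B}(\pi_v)}|\Psi_v(f_{1/2,v}(.x_v),W_{0,v},\overline{W_v})|^2$. If $\mathfrak{C}(\pi_v)\nmid\p_v^{r_v}$ then $\pi_v^{\mathrm{K}_{\p_v^{r_v}}}=0$ by newvector theory, so the sum is empty and $J(f_{1/2,v}(.x_v),W_{0,v},\pi_v)=0$, which is the first assertion. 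If $\mathfrak{C}(\pi_v)\mid\p_v^{r_v}$, the arithmetically normalised newvector $W_{\pi_v}$ is $\mathrm{K}_{\p_v^{r_v}}$-invariant, so $W_{\pi_v}/\lVert W_{\pi_v}\rVert$ may be taken as an element of $\mathcal{B}(\pi_v)$; dropping the remaining non-negative terms yields $J\ge \lVert W_{\pi_v}\rVert^{-2}|L_v(1/2,\tilde\pi\otimes\pi_0)|^{-2}\,|\Psi_v(f_{1/2,v}(.x_v),W_{0,v},\overline{W_{\pi_v}})|^2$.

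\emph{Evaluating the newvector term and the final bounds.} By the displayed identity (with $P_vW_{\pi_v}=W_{\pi_v}$) the remaining zeta integral is $N(\p_v)^{r_v/2}\mathrm{vol}(\mathrm{K}_{\p_v^{r_v}})$ times $\int_{F_v^{\times}}W_{0,v}(\diag{y})\overline{W_{\pi_v}(\diag{y})}|y|^{-1/2}\d^{\times}y$. When $\pi_v$ is unramified this is the unramified Rankin--Selberg integral at $s=1/2$, which by \eqref{eq: Miyauchi} and the usual geometric-series identity equals $L_v(1/2,\pi_0\otimes\tilde\pi)/\zeta_v(1)$, while $\lVert W_{\pi_v}\rVert^2=1$ by Lemma \ref{Lem: Whittaker integral calc} at $s=0$. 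When $\pi_v$ is ramified I would instead insert $W_{\pi_v}(\diag{\varpi_v^{n}})=N(\p_v)^{-n/2}\alpha_\pi(v,1)^{n}$ for $n\ge 0$ and sum directly, finding $\int_{F_v^{\times}}W_{0,v}(\diag{y})\overline{W_{\pi_v}(\diag{y})}|y|^{-1/2}\d^{\times}y=L_v(1/2,\pi_0\otimes\tilde\pi)$ (with no $\zeta_v(1)$ in the denominator, since the second Satake parameter of $\pi_v$ is $0$) and $\lVert W_{\pi_v}\rVert^2=\zeta_v(2)L_v(1,\pi\otimes\tilde\pi)^{-1}\big(1-N(\p_v)^{-1}|\alpha_\pi(v,1)|^2\big)^{-1}$ — here Lemma \ref{Lem: Whittaker integral calc} no longer applies. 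Feeding these into the lower bound of the previous paragraph and using $\mathrm{vol}(\mathrm{K}_{\p_v^{r_v}})=N(\p_v)^{-r_v}\zeta_v(2)/\zeta_v(1)$ together with $|L_v(1/2,\tilde\pi\otimes\pi_0)|=|L_v(1/2,\pi_0\otimes\tilde\pi)|$, the Rankin--Selberg factors cancel and the two displayed lower bounds drop out. For the last inequality I would treat the two cases separately: in the ramified case $\mathrm{vol}^{-1}(\mathrm{K}_{\p_v^{r_v}})\zeta_v^{2}(1)J\ge\zeta_v(1)L_v(1,\pi\otimes\tilde\pi)\big(1-N(\p_v)^{-1}|\alpha_\pi(v,1)|^2\big)\ge\frac{1-N(\p_v)^{-1+2\theta}}{1-N(\p_v)^{-1}}$, using $L_v(1,\pi\otimes\tilde\pi)\ge 1$ and the Ramanujan-type bound $|\alpha_\pi(v,1)|^2\le N(\p_v)^{2\theta}$; in the unramified case $\mathrm{vol}^{-1}(\mathrm{K}_{\p_v^{r_v}})\zeta_v^{2}(1)J\ge\zeta_v(2)/\zeta_v(1)$, which dominates $\frac{1-N(\p_v)^{-1+2\theta}}{1-N(\p_v)^{-1}}$ once $N(\p_v)$ is large enough, by an elementary estimate.

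\emph{Main obstacle.} The crux is the reduction in the second paragraph: one must recognise $f_{1/2,v}(.x_v)$, modulo the $\mathrm{A}(F_v)$-character, as a constant multiple of $\mathbf{1}_{\mathrm{K}_{\p_v^{r_v}}}$ on $\mathrm{K}(F_v)$, after which the $\mathrm{K}(F_v)$-invariance of $W_{0,v}$ forces the whole Plancherel sum to see only the $\mathrm{K}_{\p_v^{r_v}}$-fixed subspace; this simultaneously yields the vanishing statement and reduces the problem to a single Rankin--Selberg period. The secondary difficulty is the ramified case in the last paragraph, where Lemma \ref{Lem: Whittaker integral calc} is unavailable and one has to compute both $\lVert W_{\pi_v}\rVert^2$ and the shifted period $\int_{F_v^{\times}}W_{0,v}\overline{W_{\pi_v}}|y|^{-1/2}\d^{\times}y$ by hand from the ramified newvector values, carrying along the single surviving Satake parameter $\alpha_\pi(v,1)$.
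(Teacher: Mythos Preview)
Your proposal is correct and follows essentially the same route as the paper: both compute $f_{1/2,v}(kx_v)=N(\p_v)^{r_v/2}\mathbf{1}_{\mathrm{K}_{\p_v^{r_v}}}(k)$ via Iwasawa, reduce to the newvector contribution, and evaluate the resulting Rankin--Selberg integral and $\lVert W_{\pi_v}\rVert^2$ case by case using \eqref{eq: Miyauchi}. Your projection argument with $P_v$ is in fact a cleaner way to obtain the vanishing claim for $\mathfrak{C}(\pi_v)\nmid\p_v^{r_v}$ than what the paper writes (which focuses on the lower bound and leaves the vanishing implicit), but otherwise the arguments coincide.
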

\begin{proof}
Let $W_v$ be the arithmetically normalised newvector of $\pi_v$, as in \ref{section: Whittaker}. Then $W_v/\lVert W_v\rVert$ is the normalised spherical vector with respect to the inner product defined in equation \eqref{eq: Whittaker inner product}. We can bound trivially
\[J(f_{1/2,v}(.x_{v}), W_{0,v}, \pi_{v})\geq \frac{\lvert \Psi_{v}(f_{1/2,v}(.x_{v}),W_{0,v}, \overline{W_{v}}/\lVert W_v\rVert)\rvert^2}{\lvert L_{v}(1/2, \pi_0\otimes \tilde{\pi})\rvert^2}.\]
We do the Iwasawa decomposition of the zeta integral 
\begin{align*}
&\Psi_{v}(f_{1/2,v}(.x_{v}),W_{0,v}, \overline{W_{v}}/\lVert W_v\rVert)=\frac{1}{\lVert W_v\rVert}\int_{\text{N}(F_v)\bs \text{G}(F_v)}  f_{1/2,v}(gx_v)W_{0,v}(g)\overline{W_v(g)}\; \d g\\
&=\frac{1}{\lVert W_v\rVert}\int_{\mathrm{K}(F_v)}\int_{F_v^{\times}}  
\overline{W_{v}}\Bigg(\begin{pmatrix}
    y& \\
    &1
\end{pmatrix} k_v \Bigg) 
W_{0,v}\Bigg( \begin{pmatrix}
    y& \\
    &1
\end{pmatrix} k_v \Bigg)
f_{1/2,v}\Bigg(\begin{pmatrix}
    y& \\
    &1
\end{pmatrix} k x_v \Bigg)\frac{\d^{\times}y}{\lvert y\rvert}  \; \d k_v\\
&=\frac{1}{\lVert W_v\rVert}\int_{\mathrm{K}(F_v)}\int_{F_v^{\times}} \overline{W_v}\Bigg(\begin{pmatrix}
    y& \\
    &1
\end{pmatrix} k_v \Bigg) W_{0,v}\begin{pmatrix}
    y& \\
    &1
\end{pmatrix}f_{1/2,v}( k x_v) \lvert y\rvert^{-1/2} \, \d^{\times}y \; \d k.
\end{align*}
We need to analyse the local induced vector first. If we write $k=\begin{pmatrix}
    a&b\\
    c&d
\end{pmatrix}$ we get
\begin{align*}
    f_{1/2,v}(kx_v)=&N(\p_v)^{r_v/2}\int_{F_v^{\times}} \Phi_v \Bigg( (0\; t) \begin{pmatrix}
    a&b\\
    c&d
\end{pmatrix}\begin{pmatrix}
    \varpi_v^{-r_v}&\\
    &1
\end{pmatrix}\Bigg) \lvert t\rvert\, \d^{\times} t=N(\p_v)^{r_v/2} 
\int_{F_v^{\times}} \Phi_v ((tc\varpi_v^{-r_v},\, td)) \lvert t\rvert\, \d^{\times} t\\
=& N(\p_v)^{r_v/2}\int_{F_v^{\times}} \mathbf{1}_{\oo_v}(tc\varpi_v^{-r_v})\mathbf{1}_{\oo_v^{\times}}(td) \lvert t\rvert\, \d^{\times} t
\end{align*}
where we recall the choice of $\Phi_v$ from Section \ref{section: Set-up}.
For the integrand to be non-zero we must have $c\in \p_v^{r_v}$, $d\in \oo_v^{\times}$ and $t\in \oo_v^{\times}$ in which case we have 
    $f_{1/2,v}(kx_v)=N(\p_v)^{r_v/2}\mathbf{1}_{\mathrm{K}_{\p_v^{r_v}}}(k)$. So the integral simplifies to 
\begin{align*}
\Psi_{v}(f_{1/2,v}(.x_{v}),W_{0,v}, \overline{W_{v}}) &=\frac{N(\p_v)^{r_v/2}}{\lVert W_v\rVert}\int_{\mathrm{K}_{\p_v^{r_v}}} \int_{F_v^{\times}} \overline{W_v}\left(\begin{pmatrix}
    y& \\
    &1
\end{pmatrix} k \right) W_{0,v}\begin{pmatrix}
    y& \\
    &1
\end{pmatrix} \lvert y\rvert^{-1/2} \,\d^{\times}y \; \d k\\
&=\frac{N(\p_v)^{r_v/2}}{\lVert W_v\rVert}
\int_{\mathrm{K}_{\p_v^{r_v}}} \int_{F_v^{\times}} \overline{W_v}\begin{pmatrix}
    y& \\
    &1
\end{pmatrix} W_{0,v}\begin{pmatrix}
    y& \\
    &1
\end{pmatrix} \lvert y\rvert^{-1/2} \,\d^{\times}y \; \d k\\
&=\frac{N(\p_v)^{r_v/2}}{\lVert W_v\rVert}\text{vol}(\mathrm{K}_{\p_v^{r_v}})\int_{F_v^{\times}} \overline{W_v}\begin{pmatrix}
    y& \\
    &1
\end{pmatrix} W_{0,v}\begin{pmatrix}
    y& \\
    &1
\end{pmatrix} \lvert y\rvert^{-1/2} \,\d^{\times}y,
\end{align*}
where recall the definition of the group $\mathrm{K}_{\p_v^{r_v}}$ from equation \eqref{eq: def of cong subgp}. To calculate the final integral we recall the formula of Miyauchi \eqref{eq: Miyauchi} which lets us write
\begin{align*}
    \int_{F_v^{\times}} \overline{W_v}\begin{pmatrix}
    y& \\
    &1
\end{pmatrix} W_{0,v}\begin{pmatrix}
    y& \\
    &1
\end{pmatrix}&\lvert y\rvert^{-1/2} \,\d^{\times}y=\\
&\sum_{n\geq 0} N(\p_v)^{-n/2} \frac{\alpha_{\tilde{\pi}}(v,1)^{n+1}-\alpha_{\tilde{\pi}}(v,2)^{n+1}}{\alpha_{\tilde{\pi}}(v,1)-\alpha_{\tilde{\pi}}(v,2)}
\frac{\alpha_{\pi_0}(v,1)^{n+1}-\alpha_{\pi_0}(v,2)^{n+1}}{\alpha_{\pi_0}(v,1)-\alpha_{\pi_0}(v,2)}  .
\end{align*}
If we combine all of the geometric series, we get
\begin{equation}\label{eq: local rankin selberg calculation}
    \int_{F_v^{\times}} \overline{W_v}\begin{pmatrix}
    y& \\
    &1
\end{pmatrix} W_{0,v}\begin{pmatrix}
    y& \\
    &1
\end{pmatrix} \lvert y\rvert^{-1/2} \,\d^{\times}y=\frac{1-N(\p_v)^{-1}\alpha_{\tilde{\pi}}(v,1)\alpha_{\tilde{\pi}}(v,2)\alpha_{\pi_0}(v,1)\alpha_{\pi_0}(v,2)}{\prod_{j=1}^2\prod_{i=1}^2(1-N(\p_v)^{-1/2}\alpha_{\tilde{\pi}}(v,i)\alpha_{\pi_0}(v,j))}.
\end{equation}
Since $\pi_{0,v}$ is unramified we have that the Satake parameters factorise $\{\alpha_{\pi_0\otimes \tilde{\pi}}(v,i,j)\}=\{ \alpha_{\pi_0}(v,i)\alpha_{\tilde{\pi}}(v,j)\}$, see \cite[Chapter 14.1.]{Goldfeld2006}, and hence 
\[\prod_{j=1}^2\prod_{i=1}^2(1-N(\p_v)^{-1/2}\alpha_{\tilde{\pi}}(v,i)\alpha_{\pi_0}(v,j))^{-1}=\prod_{j=1}^2\prod_{i=1}^2(1-N(\p_v)^{-1/2}\alpha_{\pi_0\otimes \tilde{\pi}}(v,i,j))^{-1}=L_v(1/2,\pi_0\times \tilde{\pi})\]

If $\pi_v$ is also unramified, then $\alpha_{\pi}(v,1)\alpha_{\pi}(v,2)=1$ and $\lVert W_v\rVert^2=1$, by our choice of inner product \ref{section: Whittaker}, so
\[J(f_{1/2,v}, W_{0,v}, \pi_{v})\geq N(\p_v)^{r_v} \text{vol}^2(\mathrm{K}_{\p_v^{r_v}}) \zeta_v^{-2}(1)= \text{vol}(\mathrm{K}_{\p_v^{r_v}}) \frac{\zeta_v(2)}{\zeta_v^{3}(1)}.\]
After normalisation, we write
\[\text{vol\,}^{-1}(\mathrm{K}_{\p_v^{r_v}}) \zeta_v^2(1) J(f_{1/2,v}, W_{0,v}, \pi_{v})\geq  \frac{\zeta_v(2)}{\zeta_v(1)}.\]

If $\pi_v$ is ramified, then at least one of the Satake parameters is $0$, w.l.o.g. let $\alpha_{\pi}(v,2)=0$. Hence, in equation \eqref{eq: local rankin selberg calculation} we get
\[\int_{F_v^{\times}} \overline{W_v}
    \begin{pmatrix}
        y& \\
        &1
    \end{pmatrix} W_0\begin{pmatrix}
        y& \\
        &1
    \end{pmatrix} \lvert y\rvert^{-1/2}\,\d^{\times}y =L_v(1/2,\pi_0\otimes
    \tilde{\pi})\]
and by the choice of inner product \eqref{eq: Whittaker inner product}
\[\lVert W_v\rVert^2=\frac{\zeta_v(2)}{L_v(1,\pi\otimes \tilde{\pi})}\left(1-N(\p_v)^{-1} \lvert \alpha_{\pi}(v,1)\rvert^2\right)^{-1}.\]
So
\[J(f_{1/2,v}, W_{0,v}, \pi_{v})\geq  \text{vol}(\mathrm{K}_{\p_v^{r_v}})\frac{L_v(1,\pi\otimes \tilde{\pi})}{\zeta_v(1)} \left(1-N(\p_v)^{-1} \lvert \alpha_{\pi}(v,1)\rvert^2\right) \]   and after normalisation we obtain
\[\text{vol\,}^{-1}(\mathrm{K}_{\p_v^{r_v}}) \zeta_v^2(1) J(f_{1/2,v}, W_{0,v}, \pi_{v})\geq L_v(1,\pi\otimes \tilde{\pi}) \frac{1-N(\p_v)^{-1} \lvert \alpha_{\pi}(v,1)\rvert^2}{1-N(\p_v)^{-1}}. \]
We have a trivial estimate $L_v(1,\pi\otimes \tilde{\pi})\geq 1$, and we use an upper bound on Satake parameters $\lvert \alpha_\pi(v,1)\rvert^2\leq N(\p_v)^{2\theta},$ so
    \[\text{vol\,}^{-1}(\mathrm{K}_{\p_v^{r_v}}) \zeta_v^2(1) J(f_{1/2,v}, W_{0,v}, \pi_{v})\geq \frac{1-N(\p_v)^{-1+2\theta} }{1-N(\p_v)^{-1}}.\]
Therefore for any $\pi_v$ (ramified or unramified) such that $N(\p_v)\gg 1$ we have a lower bound
\[\text{vol\,}^{-1}(\mathrm{K}_{\p_v^{r_v}}) \zeta_v^2(1) J(f_{1/2,v}, W_{0,v}, \pi_{v})\geq \frac{1-N(\p_v)^{-1+2\theta} }{1-N(\p_v)^{-1}}.\]
\end{proof}
\begin{proposition}\label{Prop: Lower bound on specwght}
    We have the lower bound for the spectral weight
    \[J_\q(\pi)\gg_{\varepsilon} (1-\varepsilon)^{\omega_F(\q)} ,\]
    for any $\varepsilon>0$ where $\omega_F(\q)$ is the prime ideal omega function defined in equation \eqref{eq: ideal omega function}  .
\end{proposition}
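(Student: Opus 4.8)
The plan is to bound $J_\q(\pi)$ from below place-by-place using Proposition~\ref{Prop: Local Specwght}, and then to control the resulting finite Euler product over $v\mid\q$. We may assume $\mathfrak{C}(\pi)\mid\q$, so in particular $\mathfrak{C}(\pi_v)\mid\p_v^{r_v}$ for every $v\mid\q$: if not, Proposition~\ref{Prop: Local Specwght} gives $J_\q(\pi)=0$, which matches the first bullet of Theorem~\ref{Theorem} and is in any case incompatible with the claimed lower bound. Unwinding the definition \eqref{eq: def of spec weight} and using $\mathrm{vol}(\mathrm{K}_{\p_v^{r_v}})=N(\p_v)^{-r_v}\zeta_v(2)/\zeta_v(1)$, one obtains the identity
\[J_\q(\pi)=\prod_{v\mid\q}\Big(\mathrm{vol}^{-1}(\mathrm{K}_{\p_v^{r_v}})\,\zeta_v^2(1)\,J(f_{1/2,v}(.x_v),W_{0,v},\pi_v)\Big),\]
so it is enough to bound each factor from below.

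First I would record a single local lower bound valid at every $v\mid\q$, independent of ramification. In the unramified case Proposition~\ref{Prop: Local Specwght} gives the factor $\geq\zeta_v(2)/\zeta_v(1)=(1+N(\p_v)^{-1})^{-1}\geq 1-N(\p_v)^{-1}$; in the ramified case, using $L_v(1,\pi\otimes\tilde\pi)\geq1$, $\zeta_v(1)\geq1$ and the bound $\lvert\alpha_\pi(v,1)\rvert^2\leq N(\p_v)^{2\theta}$ towards Ramanujan, the factor is $\geq 1-N(\p_v)^{-1+2\theta}$. Since $\theta\geq0$ forces $N(\p_v)^{-1}\leq N(\p_v)^{-1+2\theta}$, in both cases
\[\mathrm{vol}^{-1}(\mathrm{K}_{\p_v^{r_v}})\,\zeta_v^2(1)\,J(f_{1/2,v}(.x_v),W_{0,v},\pi_v)\ \geq\ 1-N(\p_v)^{-1+2\theta},\]
and hence $J_\q(\pi)\geq\prod_{v\mid\q}\big(1-N(\p_v)^{-1+2\theta}\big)$.

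It then remains to show $\prod_{v\mid\q}\big(1-N(\p_v)^{-1+2\theta}\big)\gg_\varepsilon(1-\varepsilon)^{\omega_F(\q)}$. Since $\theta\leq 7/64<\tfrac{1}{2}$, the map $t\mapsto 1-t^{-1+2\theta}$ is increasing on $[2,\infty)$ with limit $1$; in particular $1-t^{-1+2\theta}\geq\delta:=1-2^{-1+2\theta}>0$ for every integer $t\geq2$. Given $\varepsilon\in(0,1)$, choose $B_\varepsilon$ with $1-t^{-1+2\theta}\geq 1-\varepsilon$ for $t\geq B_\varepsilon$; the set of prime ideals $\p$ of $F$ with $N(\p)<B_\varepsilon$ is finite, of size $k_\varepsilon=k_\varepsilon(F)$. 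Splitting the product over $v\mid\q$ according to whether $N(\p_v)<B_\varepsilon$, and bounding each of the at most $k_\varepsilon$ ``small'' factors below by $\delta$ and each remaining factor below by $1-\varepsilon$, yields
\[J_\q(\pi)\ \geq\ \prod_{v\mid\q}\big(1-N(\p_v)^{-1+2\theta}\big)\ \geq\ \delta^{k_\varepsilon}\,(1-\varepsilon)^{\omega_F(\q)},\]
which is the asserted bound with implied constant $\delta^{k_\varepsilon}$ depending only on $\varepsilon$ and $F$.

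I do not expect a real obstacle: all the substance lies in the local computation of Proposition~\ref{Prop: Local Specwght}. The only mild subtlety is that $\prod_v(1-N(\p_v)^{-1+2\theta})$ over all places diverges to $0$ once $\theta\geq0$, so one cannot simply compare with a fixed positive constant and genuinely needs the $\varepsilon$-dependent split of the primes into ``small'' and ``large''; tracking the $F$-dependence of $k_\varepsilon$ (finiteness of prime ideals of bounded norm) is routine.
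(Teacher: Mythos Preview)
Your proof is correct and follows essentially the same route as the paper: invoke Proposition~\ref{Prop: Local Specwght} to bound each local factor $\mathrm{vol}^{-1}(\mathrm{K}_{\p_v^{r_v}})\zeta_v^2(1)J(f_{1/2,v}(.x_v),W_{0,v},\pi_v)$ from below, then control the resulting finite product over $v\mid\q$. Your treatment is in fact more explicit than the paper's, which simply asserts $\prod_{v\mid\q}\frac{1-N(\p_v)^{-1+2\theta}}{1-N(\p_v)^{-1}}\gg_\varepsilon(1-\varepsilon)^{\omega_F(\q)}$ without writing out the small/large prime split; your uniform local bound $\geq 1-N(\p_v)^{-1+2\theta}$ also sidesteps the ``$N(\p_v)\gg 1$'' caveat in the last display of Proposition~\ref{Prop: Local Specwght}.
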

\begin{proof}
    By Proposition \ref{Prop: Local Specwght}, we know that the local terms in the spectral weight satisfy
   \[\prod_{v\mid \q}\text{vol\,}^{-1}(\mathrm{K}_{\p_v^{r_v}}) \zeta_v^2(1) J(f_{1/2,v}(.x_v), W_{0,v}, \pi_{v})\gg \prod_{v\mid \q} \frac{1-N(\p_v)^{-1/2+2\theta} }{1-N(\p_v)^{-1}}\]
   with the constant independent of $\q$. Therefore, we have that for any $\varepsilon>0$
    \[ \prod_{v\mid \q} \frac{1-N(\p_v)^{-1/2+2\theta} }{1-N(\p_v)^{-1}}\gg_{\varepsilon} (1-\varepsilon)^{\omega_F(\q)}, \]
    since $\theta\leq 1/4$.
\end{proof}

\bibliographystyle{alpha}
\bibliography{bfile} 
\end{document}